\documentclass[12pt]{article}
\usepackage[margin=1in]{geometry}
\usepackage{amsmath}
\usepackage{bbm}
\usepackage{url}
\usepackage[normalem]{ulem}
\usepackage{caption}
\usepackage{amssymb}
\usepackage{fourier}
\usepackage{framed}
\usepackage{color}
\usepackage{hyperref}
\definecolor{darkgreen}{rgb}{0.9, 0.9, 0.0}
\definecolor{darkred}{rgb}{0.55, 0.0, 0.0}
\definecolor{navyblue}{rgb}{0.0, 0.0, 0.7}
\hypersetup{
    colorlinks=true,
    urlcolor = {darkred},
    linkcolor = {darkred},
    citecolor = {darkred},
    linkcolor = {darkred},
    linktoc=all
}
\usepackage{amsthm}
\usepackage{graphicx}
\usepackage{tikz}
\usepackage{tikz-cd}
\usepackage{enumitem}
\usepackage{mathtools}
\usepackage{authblk}
\usepackage{extpfeil}
\usepackage{soul}

\usepackage{microtype}

\usetikzlibrary{matrix,arrows,decorations.pathmorphing,patterns,shapes.geometric}

\theoremstyle{definition}
\newtheorem{theorem}{Theorem}[section]
\newtheorem{lemma}[theorem]{Lemma}
\newtheorem{corollary}[theorem]{Corollary}
\newtheorem{proposition}[theorem]{Proposition}
\newtheorem{notation}[theorem]{Notation}
\newtheorem{example}[theorem]{Example}
\newtheorem{definition}[theorem]{Definition}
\newtheorem{convention}[theorem]{Convention}

\newtheorem{Remark}[theorem]{Remark}
\definecolor{darkblue}{rgb}{0.0, 0.0, 0.8}
\definecolor{darkred}{rgb}{0.8, 0.0, 0.0}
\definecolor{darkgreen}{rgb}{0.0, 0.5, 0.0}
\definecolor{lightgray}{rgb}{0.7, 0.7, 0.7}


\newcommand{\Full}{\mathrm{Full}}
\newcommand{\full}{\mathrm{full}}
\newcommand{\Hasse}{\mathrm{Hasse}}
\newcommand{\nbd}{\mathrm{nbd}}
\newcommand{\con}{\mathbf{Con}}
\newcommand{\rank}{\mathrm{rank}}
\newcommand{\inter}{\mathbf{Int}}

\newcommand{\A}{\mathcal{A}}
\newcommand{\I}{\mathcal{I}}
\newcommand{\id}{\mathrm{id}}
\newcommand{\ob}{\mathrm{ob}}
\newcommand{\EU}{\overline{\mathbf{U}}}
\newcommand{\ER}{\overline{\mathbf{R}}}
\newcommand{\Pb}{\mathbf{P}}
\newcommand{\Lb}{\mathbf{L}}

\newcommand{\Q}{\mathbf{Q}}
\newcommand{\vect}{\mathbf{vec}}
\newcommand{\sets}{\mathbf{set}}
\newcommand{\Vect}{\mathbf{Vec}}

\newcommand{\ab}{\mathbf{ab}}
\newcommand{\Ab}{\mathbf{Ab}}

\newcommand{\dom}{\mathrm{dom}}
\newcommand{\dero}{d_{\mathrm{E}}}

\newcommand{\Sets}{\mathbf{Set}}

\newcommand{\B}{\mathcal{B}}

\newcommand{\F}{\mathbb{F}}

\newcommand{\im}{\mathrm{im}}

\newcommand{\V}{\mathbb{V}}

\newcommand{\bott}{d_\mathrm{B}}

\newcommand{\ba}{\mathbf{a}}
\newcommand{\bb}{\mathbf{b}}

\newcommand{\bu}{\mathbf{u}}
\newcommand{\bv}{\mathbf{v}}

\newcommand{\Hrm}{\mathrm{H}}

\newcommand{\Z}{\mathbf{Z}}

\newcommand{\ZZ}{\mathbf{ZZ}}
\newcommand{\R}{\mathbf{R}}
\newcommand{\U}{\mathbf{U}}
\newcommand{\N}{\mathbf{N}}

\newcommand{\eps}{\varepsilon}
\newcommand{\dint}{d_{\mathrm{I}}}

\newcommand{\dgm}{\mathrm{dgm}}
\newcommand{\Dgm}{\mathrm{Dgm}}

\newcommand{\lmulti}{\left\{\!\!\left\{}
\newcommand{\rmulti}{\right\}\!\!\right\}}
\newcommand{\abs}[1]{\left\lvert#1\right\rvert}
\newcommand{\norm}[1]{\left\lVert#1\right\rVert}

\newcommand{\rk}{\mathrm{rk}}
\newcommand{\reeb}{\mathbf{Reeb}}

\newcommand{\dgmzz}{\mathrm{barc}^{\ZZ}}
\newcommand{\barc}{\mathrm{barc}}
\newcommand{\barcp}{\mathrm{barc}^{\Pb}}
\newcommand{\dgmpd}{\mathrm{dgm}^{\ZZ}}

\newcommand{\C}{\mathcal{C}}
\newcommand{\D}{\mathcal{D}}
\newcommand{\G}{\mathcal{G}}

\newcommand{\supp}{\mathrm{supp}}
\newcommand{\free}{L_{\F}}

\newcommand{\RNum}[1]{\uppercase\expandafter{\romannumeral #1\relax}}

\title{Generalized Persistence Diagrams for Persistence Modules over Posets}

\author[1]{Woojin Kim}
\author[2]{Facundo M\'emoli}

\affil[1]{Department of Mathematics, 
		Duke University.
		\thanks{\texttt{woojin@math.duke.edu}}}

\affil[2]{Department of Mathematics and Department of Computer Science and Engineering, 
		The Ohio State University.\thanks{\texttt{memoli@math.osu.edu}}}

\begin{document}
\maketitle

\begin{abstract}
    When a category $\C$ satisfies certain conditions, we define the notion of \emph{rank invariant} for arbitrary poset-indexed functors $F:\Pb\rightarrow \C$ from a category theory perspective. This generalizes the standard notion of rank invariant as well as Patel's recent extension. Specifically, 
    the barcode of any interval decomposable persistence modules $F:\Pb\rightarrow \vect$ of finite dimensional vector spaces can be extracted from the rank invariant by the principle of inclusion-exclusion. Generalizing this idea allows freedom of choosing the indexing poset $\Pb$ of $F:\Pb\rightarrow \C$ in defining Patel's generalized persistence diagram of $F$. Of particular importance is the fact that the generalized persistence diagram of $F$ is defined regardless of whether $F$ is  interval decomposable or not.
   
   By specializing our idea to zigzag persistence modules, we also show that the zeroth level set barcode of a Reeb graph can be obtained in a purely set-theoretic setting without passing to the category of vector spaces. This leads to a promotion of Patel's semicontinuity theorem about type $\mathcal{A}$ persistence diagram to Lipschitz continuity theorem for the category of sets.
\end{abstract}

\tableofcontents

\section{Introduction}
 The notion of persistence diagram  of  \cite{cohen2007stability} was recently extended  by Patel to the setting of constructible persistence modules $F:\R\rightarrow \C$ where $\C$ is an essentially small, symmetric monoidal category with images \cite{patel2018generalized}. In a nutshell, the persistence diagram of  $F:\R\rightarrow \C$ is obtained via M\"obius inversion of the map sending each pair $s\leq t$ in $\R$ to the \emph{image} of the morphism $F(s\leq t)$ in $\C$. This map can be regarded as a generalization of both the \emph{rank invariant} \cite{carlsson2009theory} and the \emph{persistent homology group} \cite{cohen2007stability}.  

 In this paper we further generalize the notions of  rank invariant and persistence diagram to the setting of locally finite poset-indexed functors $F:\Pb\rightarrow \C$, where, besides being symmetric, monoidal, essentially small and having images, the target category $\C$ is assumed to be bicomplete (see Convention \ref{convention} for another admissible set of assumptions about $\C$). A motivation for considering such level of generality is, as suggested by Patel, the possibility of studying torsion in data, e.g. persistent homology groups with integer coefficients. Specifically, we identify an explicit formula for the generalized persistence diagram of $F$ in terms of the generalized rank invariant of $F$ that we will define.  We in particular prove that, given an \emph{interval decomposable} diagram of vector spaces, its rank invariant and its persistence diagram can be translated into each other, and either of them contains enough information to reconstruct the diagram up to isomorphism.

One consequence of our framework is a novel method for computing the zeroth level set barcode of a Reeb graph (Section \ref{sec:reeb graph}).  This method is obtained by re-interpreting the barcode of a zigzag module \cite{carlsson2009zigzag} as the generalized persistence diagram. Furthermore, since the generalized persistence diagram is defined even for zigzag modules \emph{valued in categories other than the category of vector spaces}, our framework has significant potential to be exploited in the theoretical and algorithmic study on zigzag persistence  \cite{botnan2015interval,botnan2018algebraic,zigzag,carlsson2019parametrized,curry2016classification,elchesen2018reflection,milosavljevic2011zigzag,oudot2015zigzag}, and also in its applications to mobile sensor networks, image processing, analysis of time-varying metric spaces/graphs \cite{adams2015evasion,corcoran,kim2017stable,kim2018CCCG,mata2015zigzag}, etc.

\subsection{Our contributions}

By $\Pb$ denote any connected, locally finite poset (Definitions \ref{def:connected poset} and \ref{def:locally finite}), and by $\C$ denote any symmetric monoidal, essentially small, bicomplete category with images (see Convention \ref{convention} for another admissible set of assumptions about $\C$). 
\begin{enumerate}[label=(\roman*)]
\item We generalize the notion of rank invariant to functors $\Pb\rightarrow \C$ (i.e. generalized persistence modules \cite{bubenik2014categorification}).  In particular, our construction generalizes Patel's rank function \cite{patel2018generalized}: see Definition \ref{def:fully generalized rank invariant} and Example \ref{ex:fully general rank invariant} \ref{item:fully general rank invariant-1-parameter}. For any zigzag diagram of vector spaces, we show that our construction of the rank invariant is not only more faithful than the one introduced by V.~Puuska  \cite{puuska2017erosion}, but also becomes a \emph{complete} invariant: see Example \ref{ex:fully general rank invariant} \ref{item:fully general rank invariant-zigzag}, Theorem \ref{prop:completeness}, Remark \ref{rem:rk is complete}, and Appendix \ref{sec:puuska}. \label{item:rank invariant}
\end{enumerate}
In the sequel, we assume that the poset $\Pb$ is connected and \emph{essentially finite} -- a condition which is slightly stronger condition than being locally-finiteness; cf. Definition \ref{def:essentially}. Also, let $\vect$ be the category of finite-dimensional vector spaces over a fixed field $\F$.

\begin{enumerate}[resume,label=(\roman*)]

\item We extend the notion of generalized persistence diagram by Patel \cite{patel2018generalized} to arbitrary functors $\Pb\rightarrow \C$. In particular, for any interval decomposable persistence module  $F:\Pb\rightarrow \vect$,  the persistence diagram/barcode of $F$ can be extracted from the rank invariant by the \emph{principle of inclusion-exclusion} in combinatorics (Theorem \ref{prop:completeness}). This implies that our construction of rank invariant is \emph{complete} invariant for interval decomposable persistence modules. Also, as a by-product, we obtain a novel necessary condition for the interval decomposability of persistence modules $\Pb\rightarrow \vect$: See Remark \ref{rem:implication2} and Example \ref{ex:Amit}.\label{item:generalized PD}

\item It is well known that a Reeb graph can be seen as a zigzag persistence in the category of sets  \cite{curry2016classification,de2016categorified}. In this respect,  we show that the $0$-th level set barcode of a Reeb graph can be computed in a purely set-theoretic setting \emph{without passing to the category of vector spaces} (Section \ref{sec:reeb graph}). As a corollary, we partially promote the semicontinuity theorem by Patel to a Lipschitz continuity theorem (Corollary \ref{cor:promotion}) and this continuity theorem further extends to a certain class of Reeb graphs (Theorem \ref{thm:stability for sets}). These results indicate that the semicontinuity theorem by Patel is open to further improvement.\label{item:semicontinuity}\label{item:equivalence}
\end{enumerate}
\subsection{Key ideas: M\"obius inversion and a categorical view of the rank invariant}

This section aims at highlighting the main ideas of this paper, without discussing technical details. Specifically, we briefly overview how to generalize the notions of rank invariant and persistence diagram to the setting of finite poset-indexed diagrams;  strictly speaking, we do not actually require the indexing poset to be finite, but \emph{locally finite} (for defining the rank invariant) or \emph{essentially finite} (for defining the persistence diagram; Definition \ref{def:essentially}). 

Let $\Pb$ be a finite and connected poset (Definition \ref{def:connected poset}). Consider a diagram $F:\Pb\rightarrow \C$ where $\C$ is a symmetric monoidal, bicomplete category with images (all these terms are defined in Section \ref{sec:category}).  By combining the limit cone of $F$ and the colimit cocone of $F$, one obtains the canonical limit-to-colimit map $\phi_F:\varprojlim F \rightarrow \varinjlim F$.

Let $\con(\Pb)$ be the collection of all subposets $I$ of $\Pb$ such that the restriction of the Hasse diagram of $\Pb$ to $I$ is a connected graph (Definition \ref{def:interval poset}). For  $I\in \con(\Pb)$, we consider the restricted diagram $F|_{I}:I\rightarrow \C$ and define $\im(F|_{I})$ to be the \emph{image} of the canonical map $\phi_{F|_{I}}:\varprojlim F|_{I} \rightarrow \varinjlim F|_{I}$. Let $\I(\C)$ denote the collection of all isomorphism classes of $\C$. Now, as a generalization of the \emph{rank invariant} (a.k.a. \emph{rank function}) in \cite{carlsson2009theory,mccleary2018bottleneck,patel2018generalized}, we define the rank invariant of $F$ as the function
\[\rk(F):\con(\Pb)\rightarrow \I(\C),\] 
which sends $I\in\con(\Pb)$ to the isomorphism class of $\im(F|_I)$.
By the assumptions on $\C$, the codomain $\I(\C)$ is a symmetric monoid.\footnote{A symmetric monoid is a set $S$ equipped with a binary operation which is associative and symmetric. Also, $S$ must  contain an identity element.} Let $\A(\C)$ be the group completion of $\I(\C)$, known as the \emph{Grothendieck group} of $\C$. We define the persistence diagram of $F$ as the \emph{M\"obius inversion} of $\rk(F)$, which amounts to a function $\con(\Pb)\rightarrow \A(\C)$ (Definition \ref{def:generalized PD2}). These notions of rank invariant and persistence diagram generalize those appearing in \cite{botnan2018algebraic,carlsson2009zigzag,carlsson2009theory,cohen2007stability,patel2018generalized,ZC05}.

When restricted to the case when $\C=\vect$, related ideas have been or are currently being explored  by several researchers: Botnan, Oppermann and Steen have been studying the rank of $\phi_F$ for counting ``thin" summands in the indecomposable decomposition of $F:\Pb\rightarrow \Vect$. We remark that their work is addressing a version of Proposition \ref{rem:implication}, see announcement in \cite{botnan2017talk}. Also, Chambers and Letscher \cite{chambers2018persistent} define persistent homology for a filtration over a directed graph as the image of the canonical map between the limit and colimit. Their work also provides a variant of Proposition \ref{rem:implication} \cite[Lemma 3.1]{chambers2018persistent}.

Botnan and Lesnick exploited the notion of \emph{left Kan extensions} for addressing  stability of barcode of a zigzag module \cite{botnan2018algebraic}. We remark that $\rk(F)$ above can be interpreted as an invariant of $F$ which is obtained by interconnecting the \emph{left and right Kan extensions} of $F$ along a certain restriction functor. We refer the interested readers to the previous version (v4) of this paper; \cite[Sections E.2 and E.3]{kim2018generalized}.

\paragraph{Other related work.} In \cite{ogle2018structure}, Ogle provides a series of structure theorems about persistence modules over finite posets by taking into account inner product structures on persistence modules.

\paragraph{Organization.} In Section \ref{sec:prelim} we briefly review basic concepts from category theory and persistence theory,  and we also recall basic terminology from order theory. 
In Section \ref{sec:rank invariant in general} we define the rank invariant and persistence diagram of a functor  $\Pb\rightarrow \C$. In Section \ref{sec:reeb graph} we propose a novel viewpoint on Reeb graphs and their $0$-th level set barcodes. This new perspective  provides a new method to compute the $0$-th level set barcodes of Reeb graphs without passing to the category of vector spaces. Also, we establish stability results for the persistence diagrams of merge trees and ``untwisted" Reeb graphs. In Section \ref{sec:discussion} we discuss possible extensions.

\section{Preliminaries}\label{sec:prelim}
In Section \ref{sec:category} we set up terminology and notation relevant to category theory. In Section \ref{sec:details about interval modules} we review the notion of poset-indexed persistence modules and their interval decomposability. Also, we introduce notation relevant to zigzag persistence modules. In Section \ref{sec:hasse} we introduce the notion of path-connected subposets of a given poset which will be instrumental in the definitions of  generalized rank invariant and persistence diagram.

\subsection{Category theory elements.}\label{sec:category}

\paragraph{Categories.} Consult \cite{awodey2010category, mac2013categories} for general definitions related to category theory. For any category $\C$, let $\ob(\C)$ and $\hom(\C)$ denote the class/set of all objects and that of all morphisms in $\C$, respectively. Let $I$ be a small category, i.e. $\ob(I)$ and $\hom(I)$ are sets. For any two functors $F,G:I\rightarrow \C$, we write $F\cong G$ if $F$ and $G$ are naturally isomorphic. A functor $F:I\rightarrow \C$ will sometimes be referred to as a \emph{diagram}. Since the domain $I$ is small, we also refer to $F$ as a \emph{small} diagram. In particular, if  $\ob(I)$ and $\hom(I)$ are finite sets, then $F:I\rightarrow \C$ will be called a finite diagram. A sub-diagram of $F$ means the restriction $F|_J$ to a full subcategory $J$ of $I$.  The following categories will be of main interest in this paper.
\begin{enumerate}[label=(\roman*)]
    \item By $\Vect$ and $\vect$, we mean the category of vector spaces and \emph{finite} dimensional vector spaces, respectively with linear maps over a fixed field $\F$.
    \item By $\Sets$ and $\sets$, we mean the category of sets and \emph{finite} sets, respectively with set maps.
\end{enumerate}


Most of the following concepts can be found in \cite{mitchell1965theory,patel2018generalized,weibel2013k}.

\paragraph{Symmetric monoidal category with images.}\label{sec:terminology}
 A \emph{symmetric monoidal category} $(\C,\square)$ is, in brief, a category $\C$ with a binary operation $\square$ on its object and an identity object $e\in \ob(C)$ satisfying the following properties:

\begin{itemize}
    \item (Symmetry) $a\square b \cong b\square a$, for all $a,b\in \ob(C)$.
    \item (Associativity) $a\square(b\square c)\cong (a\square b)\square c$, for all $a,b,c\in \ob(C)$.
    \item (Identity) $a\square e \cong a$, for all $a\in \ob(C)$.
\end{itemize}
We refer the reader to \cite[p.114]{weibel2013k} for the precise definition of a symmetric monoidal
category. 

A morphism $f:a\rightarrow b$ is said to be a \emph{monomorphism} if $f$ is left-cancellative: for any morphisms $k_1, k_2: c \rightarrow a$, if $f\circ k_{1}=f\circ k_{2}$, then $k_{1}=k_{2}$. Such $f$ is often written as $f:a\hookrightarrow b$. On the other hand, a right-cancellative morphism $g:a\rightarrow b$ is said to be an \emph{epimorphism}, and often written as $g:a\twoheadrightarrow b$.

\begin{definition}[Images and epimorphic images]\label{def:images}
A morphism $f:a\rightarrow b$ has an \emph{image} if there exist a monomorphism $h:\im(f)\hookrightarrow b$ and a morphism $g:a\rightarrow \im(f)$ such that $f=h\circ g$, satisfying the following property: For any monomorphism $h':z\hookrightarrow b$ and a morphism $g':a\rightarrow z$ with $f=h'\circ g'$, there is a unique morphism $u:\im(f)\rightarrow z$ such that the following diagram commutes:

\[\begin{tikzcd}a\arrow[->]{rr}{f} \arrow{rd}{g} \arrow{rdd}[swap]{g'}&&b\\
&\im(f) \arrow[hook]{ru}{h}\arrow[dashed]{d}{u}\\
& z \arrow[hook]{ruu}[swap]{h'}
\end{tikzcd}
\]
If, moreover, the morphism $g$ is an epimorphism, then $f$ is said to have an \emph{epimorphic image}.
\end{definition}

\begin{Remark}\label{rem:monic}The morphism $u$ in the diagram above is a monomorphism: Assume that there exist two morphisms $k_1,k_2:c\rightarrow \im(f)$ with $u\circ k_1=u\circ k_2$. Then,
\begin{align*}
    u\circ k_1 =u\circ k_2  &\implies h'\circ (u\circ k_1) = h'\circ (u\circ k_2)\\
    &\implies (h'\circ u)\circ k_1 = (h'\circ u)\circ k_2\\
    &\implies h\circ k_1 = h\circ k_2\\
    &\implies  k_1 = k_2&\mbox{since $h$ is a monomorphism.}
\end{align*}
\end{Remark}

A category in which every morphism has an image (resp. epimorphic image) is said to be a \emph{category with images (resp. category with epimorphic images)}. See \cite[p.12]{mitchell1965theory} for more about images.

\paragraph{Complete, cocomplete, essentially small categories} \cite{mac2013categories}.
 A category $\C$ is called \emph{complete} if every small diagram $F:I\rightarrow \C$ has a limit in $\C$. Likewise, a category $\C$ is called \emph{cocomplete} if every small diagram has a colimit in $\C$. If a category $\C$ is both complete and cocomplete, then $\C$ is called \emph{bicomplete}.\footnote{Besides $\Sets$ and  $\Vect$, examples include the category of groups, the category of abelian groups, the category of topological spaces.} A category $\C$ is called \emph{essentially small} if the collection of isomorphism classes of objects in $\C$ is a set.

\begin{convention}\label{convention} Throughout this paper, unless otherwise stated, we always assume that $(\C,\square)$ (or simply $\C$) is essentially small and symmetric monoidal.
Also, we always assume that at least one of conditions (a) and (b) below hold:
\begin{enumerate}[label=(\alph*)]
    \item $\C$ is bicomplete and has images.\label{item:option1}
    \item $\C$ is a full subcategory\footnote{A subcategory $\C'$ of a category $\C$ is \emph{full} if for all $a,b\in \ob(\C')$, $\hom_{\C'}(a,b)=\hom_{\C}(a,b)$.} of a bicomplete category $\D$, where \label{item:a full subcategory of a bicomplete category}
   \begin{itemize}
        \item $\D$ has \emph{epimorphic} images 
        \item  for any monomorphism $f:a\hookrightarrow b$ in $\D$, if $b\in \ob(\C)$, then $a\in \ob(\C)$.\label{item:mono}
        \item  for any epimorphism  $g:c\twoheadrightarrow d$ in $\D$, if $c\in \ob(\C)$, then $d\in \ob(\C)$.\label{item:epi}
    \end{itemize}
\end{enumerate}
\end{convention}

For example, the categories  $(\sets,\sqcup)$ (where $\sqcup$ stands for disjoint union), $(\vect,\oplus)$ (where $\oplus$ stands for direct sum), and $(\ab,\oplus)$ (the category of finitely generated abelian groups) all conform with Convention \ref{convention}. In particular, they are full subcategories of the bicomplete categories $\Vect$, $\Sets$, and $\Ab$ (the category of abelian groups) respectively whose images are epimorphic.

\paragraph{Grothendieck groups.} We review the notion of Grothendieck groups from  \cite[Chapter II]{weibel2013k} and \cite[Section 6]{patel2018generalized}. Fix a category $(\C,\square)$. Let $\I(\C)$ be the set of all isomorphism classes of $\C$. For any $C\in \ob(\C)$, let $[C]$ denote the isomorphism class of $C$ in $\I(\C).$ The binary operation in $\I(\C)$ is defined as $[a]+[b]:=[a\square b]$. The group completion $\A(\C)$ of $\I(\C)$ with respect to $+$ is said to be the \emph{Grothendieck group} of $\C$ . Hence, the notions of addition $+$ and subtraction $-$ are well-defined in $\A(\C)$. When $\C$ is an abelian category, there is another type of Grothendieck group $\B(\C)$.\footnote{Define the relation $\sim$ on $\A(\C)$ as $[b]\sim [a]+[c]$ if there exists a short exact sequence $0\rightarrow a\rightarrow b\rightarrow c\rightarrow 0$. Then, $\B(C)$ is defined as the quotient group $\A(\C)/\sim$.   The category $\vect$ is an instance of abelian categories, and it holds that $\A(\vect)=\B(\vect)$ \cite[Example 6.2.1]{patel2018generalized}.} However, it is beyond the scope of this paper to give a complete treatment of $\B(\C)$.

\begin{Remark}
\label{rem:counterpart} For the monoid $(\I(\vect),\bigoplus)$ (resp. $(\I(\sets),\bigsqcup)$) of isomorphism classes, we have the isomorphism 
\[(\I(\vect),\bigoplus)\cong (\Z_+,+)\ \mbox{(resp.  $(\I(\sets),\bigsqcup)\cong (\Z_+,+))$},
\] which sends each isomorphism class $[V]\in\I(\vect)$ to $\dim(V)$ (resp. $[A]\in\I(\sets)$ to $\abs{A}$). Extending this isomorphism, we obtain the group isomorphism $(\A(\vect),\bigoplus)$ $\cong(\Z,+)$ (resp. $(\A(\sets),\bigsqcup)\cong(\Z,+)$.). 
\end{Remark}

\subsection{Interval decomposability and barcodes}\label{sec:details about interval modules}

Given any poset $\Pb$, we regard $\Pb$ as the category: Objects are elements of $\Pb$. For any $p,q\in \Pb$, there exists a unique morphism $p\rightarrow q$  if  and only if $p\leq q$.

For $\Pb$ a poset and $\C$ an arbitrary category, $F:\Pb\rightarrow \C$ a functor, and $s\in \Pb$, let $F_s:=F(s)$. Also, for any pair $s\leq t$ in $\Pb$, let $\varphi_F(s,t):F_s\rightarrow F_t$ denote the morphism $F(s\leq t).$ 
Any functor $F:\Pb\rightarrow \vect$ is said to be a \emph{$\Pb$-indexed module}.

\paragraph{Interval modules and direct sums.}  We mostly follow the notation and definitions from \cite{botnan2018algebraic}.

\begin{definition}[Connected posets]\label{def:connected poset} A poset $\Pb$ is said to be \emph{connected}, if for all $p,q\in \Pb$, there exists a sequence $p=p_0,\ldots,p_n=q$ in $\Pb$ such that $p_i$ and $p_{i+1}$ are comparable, i.e. $p_i\leq p_{i+1}$ or $p_{i+1}\leq p_i$,  for $i=0,\ldots,n-1$.
\end{definition}  
 For a poset $\Pb$, non-empty \emph{convex} connected subposets of $\Pb$ are said to be \emph{intervals} of $\Pb$:
\begin{definition}[Intervals]\label{def:intervals} Given a poset $\Pb$, an \emph{interval} $\mathcal{J}$ of $\Pb$ is any non-empty subset $\mathcal{J}\subset \Pb$ such that (1) $\mathcal{J}$ is connected, and (2) if $r,t\in \mathcal{J}$ and $r\leq s\leq t$, then $s\in \mathcal{J}$.

By $\inter(\Pb)$, we denote the collection of all intervals of $\Pb$.	
\end{definition}

\label{interval module}For $\mathcal{J}$ an interval of $\Pb$, the \emph{interval module} ${I}^{\mathcal{J}}:\Pb\rightarrow \vect$ is the $\Pb$-indexed module where 

\[{I}_t^{\mathcal{J}}=\begin{cases}
\mathbb{F}&\mbox{if}\ t\in \mathcal{J},\\0
&\mbox{otherwise.} 
\end{cases}\hspace{20mm} \varphi_{I^{\mathcal{J}}}(s,t)=\begin{cases} \mathrm{id}_\mathbb{F}& \mbox{if} \,\,s,t\in\mathcal{J},\ s\leq t,\\ 0&\mbox{otherwise.}\end{cases}\]

Let $F,G$ be $\Pb$-indexed modules. The \emph{direct sum $F\bigoplus G$ of $F$ and $G$} is the $\Pb$-indexed module defined as follows: for all $s\in \Pb$, $(F\bigoplus G)_s:=F_s\bigoplus G_s$ and for all $s\leq t$ in $\Pb$, the linear map $\varphi_{F\bigoplus G}(s,t):(F\bigoplus G)_s \rightarrow (F\bigoplus G)_t$ is defined by  \[\varphi_{F\bigoplus G}(s,t)(v,w):=\big(\varphi_F(s,t)(v),\varphi_G(s,t)(w)\big)\] for all $(v,w)\in (F\bigoplus G)_s$. We say that a $\Pb$-indexed module $F$ is \textit{decomposable} if $F$ is (naturally) isomorphic to $G_1\bigoplus G_2$ for some non-trivial $\Pb$-indexed modules $G_1$ and $G_2$, and we denote it by  $F\cong G_1\bigoplus G_2$. Otherwise, we say that $F$ is \textit{indecomposable}. 

\begin{proposition}[{\cite[Proposition 2.2]{botnan2018algebraic}}] ${I}^\mathcal{J}$ is indecomposable.
\end{proposition}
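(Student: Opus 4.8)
The plan is to show that the endomorphism ring of $I^{\mathcal{J}}$ has no nontrivial idempotents, since over a field this is equivalent to indecomposability (a direct sum decomposition corresponds precisely to a splitting idempotent). So first I would compute $\mathrm{End}(I^{\mathcal{J}})$, the set of natural transformations $\eta: I^{\mathcal{J}} \to I^{\mathcal{J}}$. For each $t \in \mathcal{J}$, the component $\eta_t : \mathbb{F} \to \mathbb{F}$ is multiplication by some scalar $\lambda_t \in \mathbb{F}$; for $t \notin \mathcal{J}$ the component is forced to be the zero map $0 \to 0$. The naturality squares then say: whenever $s \leq t$ with $s, t \in \mathcal{J}$, we have $\lambda_t \cdot \mathrm{id}_{\mathbb{F}} = \mathrm{id}_{\mathbb{F}} \cdot \lambda_s$, i.e. $\lambda_s = \lambda_t$; the squares involving indices outside $\mathcal{J}$ are automatically satisfied since one of the corners is $0$.

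Next I would invoke the connectivity condition (iii) in Definition \ref{def:intervals}: given any two elements $s, t \in \mathcal{J}$, there is a finite zigzag $s = s_0, s_1, \dots, s_l = t$ of elements of $\mathcal{J}$ with consecutive terms comparable. Applying the equality $\lambda_{s_i} = \lambda_{s_{i+1}}$ along each comparable pair (in whichever direction the comparison goes), one concludes $\lambda_s = \lambda_t$. Hence all the scalars $\lambda_t$ coincide, say with a common value $\lambda$, and every endomorphism of $I^{\mathcal{J}}$ is $\lambda \cdot \mathrm{id}$ for a unique $\lambda \in \mathbb{F}$. Therefore $\mathrm{End}(I^{\mathcal{J}}) \cong \mathbb{F}$ as a ring.

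Finally, since $\mathbb{F}$ is a field, its only idempotents are $0$ and $1$, corresponding to the zero endomorphism and the identity of $I^{\mathcal{J}}$. A decomposition $I^{\mathcal{J}} \cong G_1 \oplus G_2$ would yield the projection onto $G_1$ as an idempotent endomorphism, which must be $0$ or $\mathrm{id}$, forcing $G_1 = 0$ or $G_2 = 0$; thus no nontrivial decomposition exists and $I^{\mathcal{J}}$ is indecomposable.

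The only place requiring genuine care — the "main obstacle," though it is mild — is the use of the connectivity axiom to propagate equality of scalars across all of $\mathcal{J}$. Without condition (iii), an "interval" could be a disconnected union on which independent scalars live, and the module would indeed decompose; so the argument must explicitly pass through the zigzag of comparabilities rather than assuming $\mathcal{J}$ is, say, an order-interval $[a,b]$. Everything else (identifying components with scalars, checking naturality, the idempotent-decomposition correspondence) is routine.
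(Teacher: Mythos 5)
Your proof is correct and is essentially the standard argument: the paper itself gives no proof but cites \cite[Proposition 2.2]{botnan2018algebraic}, where the result is established in exactly this way, by showing $\mathrm{End}(I^{\mathcal{J}})\cong\mathbb{F}$ (using the connectivity axiom to propagate the scalar) and concluding via the absence of nontrivial idempotents. Your emphasis on condition \ref{item:intervals3} of Definition \ref{def:intervals} as the crux is exactly right.
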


\paragraph{Barcodes.} Recall that \textit{a multiset} is a collection of objects (called elements) in which elements may occur more than once. We call the number of instances of an element in a specific multiset \textit{the multiplicity} of the element. For example, $A=\lmulti x,x,y\rmulti$ is a multiset and the multiplicity of $x$ is two. Also, this multiset $A$ is distinct from the multiset $\lmulti x,y \rmulti$.

\begin{definition}[Interval decomposability]\label{def:interval decomposable}A $\Pb$-indexed module $F$ is \emph{interval decomposable} if there exists a multiset $\barcp(F)$ of intervals (Definition \ref{def:intervals}) of $\Pb$ such that 
\[F\cong \bigoplus_{\mathcal{J}\in \barcp(F)}I^{\mathcal{J}}\ \ \ \]
\end{definition}

It is well-known that, by the theorem of Azumaya-Krull-Remak-Schmidt \cite{azumaya1950corrections}, such a decomposition is unique up to a permutation of the terms in the direct sum. Therefore, the multiset $\barcp(F)$ is unique if $F$ is interval decomposable  since a multiset is careless of the order of its elements. 
\begin{definition}[Barcodes]\label{def:barcode}
We call $\barcp(F)$ in Definition \ref{def:interval decomposable} the \emph{barcode} of $F.$
\end{definition}


Given any two posets $\Pb,\Pb'$, we assume that by default the product $\Pb\times\Pb'$ is equipped with the partial order where $(p,p')\leq (q,q')$ if and only if $p\leq p'$ and $q\leq q'$. 
Also, by $\Pb^\mathrm{op}$ we mean the opposite poset of $\Pb$, i.e. for any $p,q\in \Pb$,  $p\leq q$ in $\Pb^{\mathrm{op}}$ if and only if $q\leq p$ in $\Pb$.

\begin{figure}
    \centering
      \includegraphics[width=0.35\textwidth]{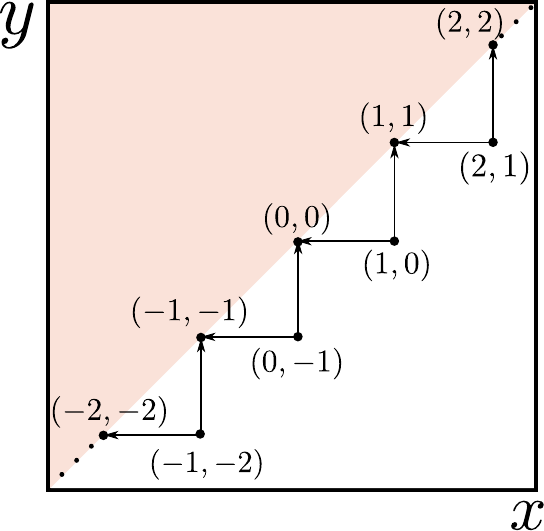}
     \caption{The posets $\ZZ$ in Definition \ref{def:posets}.}
     \label{fig:ZZ}
\end{figure}

\begin{definition}[Zigzag poset]\label{def:posets} \label{item:ZZ} The poset $\ZZ=\{(i,j)\in \Z^2: j=i\ \mbox{or}\ j=i-1\}$ has the partial order inherited from $\R^{\mathrm{op}}\times \R$. See Figure \ref{fig:ZZ}.
\end{definition}

\begin{definition}[Zigzag module]Any functor $F:\ZZ\rightarrow \vect$ is called a \emph{zigzag module}.
\end{definition}

\begin{theorem}[Interval decomposability of 1-D persistence modules and zigzag modules {\cite{botnan2015interval,zigzag,crawley2015decomposition}}]\label{thm:interval decomposition1}\label{thm:interval decomposition0}  For any $\Pb \in \{\R,\Z,\ZZ\}$, $\Pb$-indexed modules are interval decomposable. Also, for each interval $I$ of $\ZZ$ (Definition \ref{def:intervals}), $I$-indexed modules are interval decomposable.
\end{theorem}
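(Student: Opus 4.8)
The plan is to reduce the statement to the well-known classification results for the three ``linear'' posets $\R$, $\Z$, and the zigzag poset $\ZZ$, and then to handle the remaining new claim — decomposability of $I$-indexed modules for an arbitrary interval $I$ of $\ZZ$ — by showing that any such interval is order-isomorphic to (a subposet that behaves like) one of the already-understood cases, or directly to a linear zigzag poset. First I would record what the intervals of $\ZZ$ actually look like: by Definition~\ref{def:intervals}, an interval $I\subseteq \ZZ$ is a nonempty, convex, connected subset, and since $\ZZ$ is itself a ``line'' (its Hasse diagram is a bi-infinite path $\cdots\, (i,i) - (i,i{-}1) - (i{-}1,i{-}1)-\cdots$), any such $I$ is just a subpath, hence order-isomorphic either to a finite zigzag poset $\{1,\dots,n\}$ with some orientation (an $A_n$-type quiver), or to a half-infinite zigzag, or to all of $\ZZ$. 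So the classification of $I$-indexed modules follows from Gabriel's theorem for type $A_n$ quivers in the finite case, and from the structure theory of (possibly infinite) zigzag/$A_\infty$-type representations in the infinite cases.

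The key steps, in order, would be: (1) cite Theorem~\ref{thm:interval decomposition1}'s sources \cite{botnan2015interval,zigzag,crawley2015decomposition} to dispatch $\Pb\in\{\R,\Z,\ZZ\}$ directly — these are exactly the statements proved there. (2) For an interval $I$ of $\ZZ$: observe $I$ inherits a total-order-like ``path'' structure; make precise that the Hasse diagram of $I$ is a connected subgraph of the Hasse diagram of $\ZZ$, which is a line, hence is itself a finite or infinite path. (3) Conclude $I$ is isomorphic as a poset to a finite zigzag poset, a ray zigzag poset, or $\ZZ$ itself; in each case an $I$-indexed module is a representation of a (finite or infinite) linear $A$-type quiver. (4) Invoke the appropriate decomposition theorem: Gabriel / Auslander's theorem for finite $A_n$ quivers, and for the infinite cases either embed $I$ into $\ZZ$ and restrict the decomposition of the extended module (using that $I$ is convex, so restriction of interval modules over $\ZZ$ to $I$ yields interval modules over $I$), or cite \cite{crawley2015decomposition,botnan2015interval} which already treat pointwise-finite-dimensional representations of such posets. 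The cleanest route for (4) in the infinite case is probably the embedding argument: extend an $I$-indexed module $M$ to a $\ZZ$-indexed module $\tilde M$ by zero outside $I$, decompose $\tilde M$ into interval modules over $\ZZ$ by the $\ZZ$ case already established, and note each summand restricts to either an interval module over $I$ or to $0$.

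I expect the main obstacle to be step (2)–(3): being careful about what ``order-isomorphic to a zigzag poset'' means and checking that an arbitrary interval $I$ of $\ZZ$ — which a priori is just an abstract convex connected subposet — really is such a path, with no pathologies at the ``ends.'' One must verify that convexity plus connectivity in the sense of Definition~\ref{def:intervals}\ref{item:intervals3} forces $I$ to be an ``interval of consecutive vertices'' along the Hasse line of $\ZZ$, ruling out, say, disconnected-looking unions that nonetheless satisfy convexity. A secondary subtlety is the zero-extension argument in the infinite case: one needs that extending by $0$ is functorial (it is, since $I$ is convex in $\ZZ$, so there are no ``new'' comparabilities to define maps across) and that the restriction of the AKRS decomposition is still a direct-sum decomposition into interval modules — this is routine but should be stated. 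Everything else is a direct appeal to Theorem~\ref{thm:interval decomposition1} and the cited literature.
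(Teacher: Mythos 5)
The paper gives no proof of this theorem: it is stated as a known result and attributed entirely to the cited references (Botnan, Carlsson--de~Silva, Crawley-Boevey), including the clause about intervals $I$ of $\ZZ$. Your proposal is correct and consistent with that: the case $\Pb\in\{\R,\Z,\ZZ\}$ is exactly what those references prove, and your zero-extension argument for an interval $I$ of $\ZZ$ (extend an $I$-indexed module by $0$ to a $\ZZ$-indexed module, decompose by the $\ZZ$ case, and observe that every interval summand must be supported inside $I$ because the extension vanishes outside $I$, hence restricts to an interval module over $I$) is a clean and valid way to supply the missing reduction.
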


\paragraph{The barcode associated to a zigzag module.} By Theorem \ref{thm:interval decomposition0}, any zigzag module $M:\ZZ\rightarrow \vect$ admits a \emph{barcode} (Definition \ref{def:barcode}), which will be denoted by $\dgmzz(M)$.

\begin{notation}[Intervals of $\ZZ$]\label{nota:intervals of zz} The notation introduced in \cite{botnan2018algebraic} is useful for describing barcodes of zigzag modules: Letting $\preceq$ (resp. $\prec$) denote the partial order (resp. the strict partial order) on $\Z^2$ (not on $\Z^{\mathrm{op}}\times \Z$), any interval of $\ZZ$ falls into  one of the following four types: 
	\begin{align*}
	(b,d)_{\ZZ}&:=\{(i,j)\in \ZZ: (b,b)\prec(i,j)\prec(d,d)\}&&\mbox{for\   $b<d$ in $\Z\cup \{-\infty,\infty\}$},\\
	[b,d)_{\ZZ}&:=\{(i,j)\in \ZZ: (b,b)\preceq (i,j)\prec(d,d)\}&&\mbox{for    $b<d$ in  $\Z\cup \{\infty\}$},\\
	(b,d]_{\ZZ}&:=\{(i,j)\in \ZZ: (b,b)\prec (i,j)\preceq(d,d)\}&& \mbox{for    $b<d$ in  $\Z\cup \{-\infty\}$},\\
	[b,d]_{\ZZ}&:=\{(i,j)\in \ZZ: (b,b)\preceq (i,j)\preceq(d,d)\}&& \mbox{for   $b\leq d$ in $\Z$}.
	\end{align*}
	
See Figure \ref{fig:intervals} for examples. Specifically, we let $\langle b,d \rangle_{\ZZ}$ denote any of the above types of intervals. By utilizing this notation, the barcode of a zigzag module $M:\ZZ\rightarrow \vect$ can be expressed, for some index set $J$, as 
\[\dgmzz(M)=\lmulti\langle b_j,d_j\rangle_\ZZ: M\cong \bigoplus_{j\in J}I^{\langle b_j,d_j \rangle_\ZZ} \rmulti.\] Here, $\lmulti\cdot\rmulti$ is used instead of $\{\cdot\}$ to indicate $\dgmzz(M)$ is a \emph{multiset.}

\end{notation}	
\begin{figure}
\includegraphics[width=\textwidth]{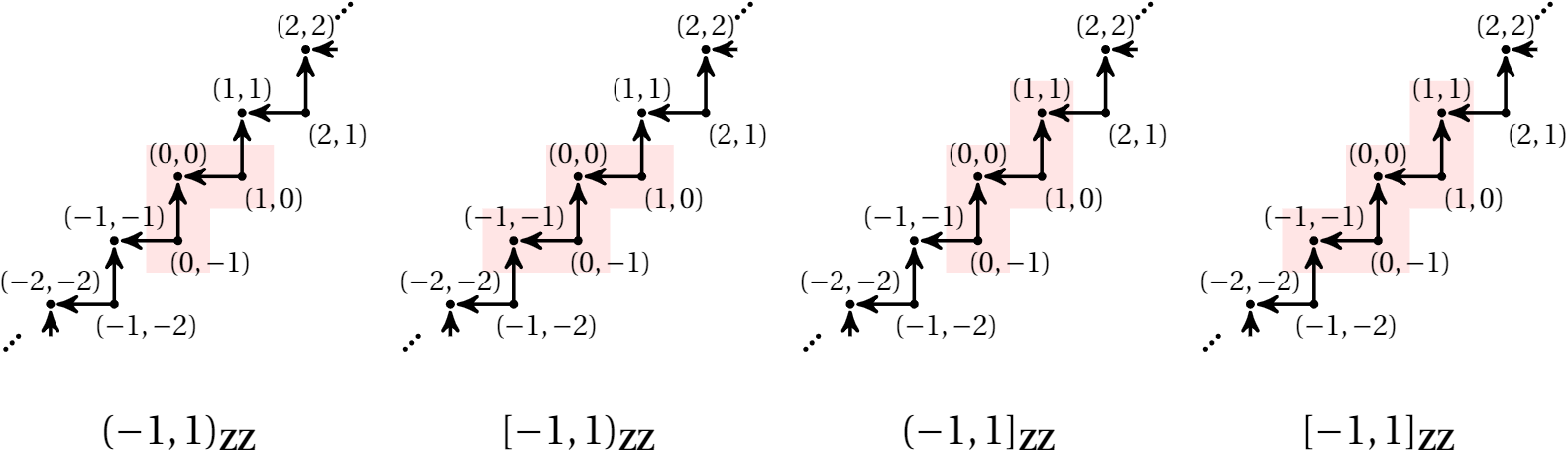}
	\caption{\label{fig:intervals}The points falling into the shaded regions comprise the intervals $(-1,1)_\ZZ, [-1,1)_\ZZ, (-1,1]_\ZZ$ and $[-1,1]_\ZZ$ of the poset $\ZZ$, respectively in order.}
\end{figure}

\subsection{Hasse diagram and path-connected subposets of a poset.}\label{sec:hasse}

We introduce  the notion of \emph{path-connected subposets} of a poset (this concept is \emph{different} from that of \emph{connected posets} in Definition \ref{def:connected poset}). To that end, we begin by reviewing relevant terminology from lattice theory \cite{birkhoff1940lattice}.

\begin{definition}[Locally finite posets]\label{def:locally finite} A poset $\Pb$ is said to be \emph{locally finite} if for all $p,q\in \Pb$ with $p\leq q$, the set $[p,q]:=\{r\in \Pb: p\leq r\leq q\}$ is finite.
\end{definition}

Let $\Pb$ be a poset and let $p,q\in \Pb$. We say that $p$ \emph{covers} $q$ if $q< p$ and there is no $r\in \Pb$ such that $q<r<p$. If either [$p$ covers $q$] or [$q$ covers $p$], then we write $p\asymp q$.

\begin{definition}[Hasse diagram of a poset]   The \emph{Hasse diagram} of a poset $\Pb$ is a simple graph on the vertex set $\Pb$, denoted by $\Hasse(\Pb)$, where any two different vertices $p,q\in \Pb$ are adjacent if and only if $p \asymp q$.
\end{definition}
 
  If $\Pb$ is connected (Definition \ref{def:connected poset}) and locally finite, then it is clear that $\Hasse(\Pb)$ is a connected graph. Locally finiteness is an important assumption for studying the connectedness of the Hasse diagram of a poset. For example, even though $\R$ is a connected totally ordered set, the Hasse diagram of $\R$ contains no edge.

\begin{definition}[Path-connected subposets]\label{def:interval poset}  Let $\Pb$ be a poset. A  subposet $I\subset \Pb$  is said to be \emph{path-connected} in $\Pb$ if the induced subgraph of $\Hasse(\Pb)$ on $I$ is a connected graph. 

By $\con(\Pb)$ we denote the collection of all path-connected subposets of $\Pb$. We equip $\con(\Pb)$ with the partial order given by inclusion, i.e. a pair $I,J\in \con(\Pb)$ satisfies $I\leq J$ if and only if  $I\subset J$.
\end{definition}

We emphasize the difference between  connectivity and path-connectivity  (Definitions \ref{def:connected poset} and \ref{def:interval poset}) as follows:  If a subposet $\Q$ is path-connected in $\Pb$, then $\Q$ itself is connected. However, in general, the converse is not true. For example, given the poset $\{a\leq b\leq c\}$, the subposet $\{a\leq c\}$ is connected itself, but \emph{not} path-connected in $\{a\leq b\leq c\}$. 

\begin{Remark}\label{rem:con(p)}
    Let $\Pb$ be a locally finite poset. Note that the collection $\inter(\Pb)$ of all intervals of $\Pb$ (Definition \ref{def:intervals}) is contained in $\con(\Pb)$. Also, $\inter(\Pb)$ can sometimes be equal to $\con(\Pb)$, e.g.  $\Pb=\Z$ or $\Pb=\ZZ$.\label{item:con(p)2}
\end{Remark}


\section{Generalized rank invariant and generalized persistence diagrams}\label{sec:rank invariant in general}

In this section we introduce the notions of \emph{generalized rank invariant} and \emph{generalized persistence diagram}. 

\subsection{Rank invariant for persistence modules over posets}\label{sec:the rank invariant}

In this section, we introduce the notion of \emph{generalized rank invariant} of a diagram $\Pb\rightarrow \C$ when $\Pb$ is a locally finite, connected poset (as for assumptions on the target category $\C$, recall Convention \ref{convention}).

\paragraph{Rank and rank invariant of a persistence module over a poset.} Recall the notions of cone and cocone from category theory (Definitions \ref{def:cone} and \ref{def:cocone}). The following observation is the first step toward defining the rank invariant of a persistence module over a poset.

\begin{proposition}[Canonical map from a cone to a cocone]\label{prop:canonical}
Let $\Pb$ be a connected poset and let $F:\Pb\rightarrow\C$. Let $\left(L,(\pi_a)_{a\in \Pb}\right)$ and $\left(C,(i_a)_{a\in \Pb}\right)$ be any cone and cocone of $F$, respectively. Then for any $a,b\in \Pb$, we have that $i_a\circ \pi_a= i_b\circ \pi_b.$
\end{proposition}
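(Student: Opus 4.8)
The plan is to reduce the statement to the case of a \emph{comparable} pair $a\leq b$ and then propagate the equality along the Hasse diagram of $\ZZ$, which is connected.

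First I would record the defining compatibility relations. Since $\left(L,(\pi_a)_{a\in\ZZ}\right)$ is a cone over $M$, for every comparable pair $a\leq b$ in $\ZZ$ we have $\varphi_M(a,b)\circ \pi_a=\pi_b$. Since $\left(C,(i_a)_{a\in\ZZ}\right)$ is a cocone under $M$, for every comparable pair $a\leq b$ we have $i_b\circ \varphi_M(a,b)=i_a$. The key computation is then immediate: fixing any comparable pair $a\leq b$,
\[
i_b\circ\pi_b \;=\; i_b\circ\bigl(\varphi_M(a,b)\circ\pi_a\bigr)\;=\;\bigl(i_b\circ\varphi_M(a,b)\bigr)\circ\pi_a\;=\;i_a\circ\pi_a,
\]
using the cone relation, associativity of composition, and the cocone relation in turn. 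Hence the assignment $a\mapsto i_a\circ\pi_a$ takes the same value on any two elements of $\ZZ$ that are comparable.

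Finally I would pass from comparable pairs to arbitrary pairs using connectivity of $\ZZ$: its Hasse diagram is the bi-infinite path $\cdots\,(i,i)\leftarrow (i{+}1,i)\rightarrow (i{+}1,i{+}1)\leftarrow\cdots$, so for any $a,b\in\ZZ$ there is a finite sequence $a=c_0,c_1,\dots,c_n=b$ in $\ZZ$ whose consecutive terms are comparable. Applying the previous paragraph to each consecutive pair gives $i_a\circ\pi_a = i_{c_0}\circ\pi_{c_0}=\cdots=i_{c_n}\circ\pi_{c_n}=i_b\circ\pi_b$, as desired. There is no genuine obstacle here; the only points requiring care are getting the directions of the cone/cocone squares right and invoking the (elementary) connectedness of $\ZZ$ so that the comparable-pair case suffices. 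I would also note in passing that the same argument works verbatim for any diagram indexed by a poset with connected Hasse diagram, which is precisely the generality needed when this construction is later extended to arbitrary $F:\Pb\to\C$.
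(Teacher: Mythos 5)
Your proof is correct and follows essentially the same route as the paper's: establish $i_a\circ\pi_a=i_b\circ\pi_b$ for comparable pairs via the cone and cocone relations, then chain the equalities along a sequence of comparable elements using the connectivity of $\ZZ$. The paper even makes the same closing remark (its Remark \ref{rem:connectivity is the essence}) that connectivity of the indexing poset is the only thing used, so the argument generalizes verbatim.
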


\begin{proof} Fix $a,b\in \Pb$ and let $a=c_1,c_2,\ldots,c_n=b$ be any sequence in $\Pb$ such that $c_j$ and $c_{j+1}$ are comparable for $j=1,\ldots,n-1$. Let $V_j:=M_{c_j}$ for each $j$. Then, we have the combined commutative diagram of $\left(L,(\pi_{c_j})_{j=1,\ldots,n}\right)$ and $\left(C,(i_{c_j})_{j=1,\ldots,n}\right)$ as follows:
\[\begin{tikzcd}
		&&C&&\\ \\V_1\arrow[<->]{r}[description]{f_1}\arrow{rruu}[description]{i_{c_1}}&V_2\arrow{ruu}[description]{i_{c_2}}&V_3\arrow[<->]{l}[description]{f_2}\arrow[<->]{r}[description]{f_3}\arrow{uu}[description]{i_{c_3}}&\cdots\arrow[<->]{r}[description]{f_{n-1}}\arrow{luu}&V_n\arrow{lluu}[description]{i_{c_n}}\\ \\ 
		&&L\arrow{lluu}[description]{\pi_{c_1}}\arrow{luu}[description]{\pi_{c_2}}\arrow{uu}[description]{\pi_{c_3}}\arrow{ruu}\arrow{rruu}[description]{\pi_{c_n}}&&
		\end{tikzcd}
\]
Without loss of generality, assuming $f_1$ is a map from $V_1$ to $V_2$, we prove that $i_{c_1}\circ \pi_{c_1}= i_{c_2}\circ \pi_{c_2}$. This is clear by tracking the commutativity of the diagram above: \[i_{c_1}\circ \pi_{c_1} = (i_{c_2}\circ f_1)\circ \pi_{c_1} = i_{c_2}\circ (f_1\circ \pi_{c_1}) = i_{c_2}\circ \pi_{c_2}.\]
Similarly, for each $k=2,\ldots, n-1$, we have $i_{c_k}\circ\pi_{c_k}=i_{c_{k+1}}\circ \pi_{c_{k+1}},$ completing the proof.
\end{proof}

Recall that a limit (resp. colimit) of a functor $F$ is the terminal (resp. initial) object in the category of cones (resp. cocones) over $F$ (Definitions \ref{def:limit} and \ref{def:colimit}). By virtue of Proposition \ref{prop:canonical}, we can define:

\begin{definition}[Rank of a poset-indexed diagram]\label{def:fully general rank} Let $\Pb$ be a connected poset.  The \emph{rank} of an $F:\Pb\rightarrow \C$ is defined as the isomorphism class of the image of the canonical limit-to-colimit map $\psi_F:\varprojlim F\rightarrow \varinjlim F$.
\end{definition}

In what follows, for any $F:\Pb\rightarrow \vect$, we will regard the codomain of $\rk(F)$ as $\Z_+$ by identifying $m\in \Z_+$ with $[\F^m]\in \I(\vect)$ (Remark \ref{rem:counterpart}). 

\begin{Remark}\label{rem:rank is finite}
In Definition \ref{def:fully general rank}, assuming $\C=\vect$, the rank of $F$ cannot exceed $\min_{\ba\in \Pb}\dim(F_\ba)$ and is thus finite. More generally,  $\im(\psi_F)$ belongs to $\I(\C)$ for any category $\C$ satisfying Convention \ref{convention}.
\end{Remark} 
The example below justifies the use of the term \emph{rank}.

\begin{example}\label{ex:trivial cases} 
\begin{enumerate}[label=(\roman*)]
    \item  For the singleton set $\{\bullet\}$, a functor $F:\{\bullet\}\rightarrow \vect$  amounts to a vector space $F_{\bullet}$ with the identity map $\id_{F_{\bullet}}:F_{\bullet}\rightarrow F_{\bullet}$. In this case, the rank of $F$ is the dimension of $F_{\bullet}$. \label{item:trivial cases1}
    \item A functor $G:\{a\leq b\}\rightarrow \vect$  amounts to the linear map $G_{a}\stackrel{g}{\longrightarrow}G_{b}$, where $g=\varphi_G(a\leq b)$. Note that $\varprojlim G \cong G_{a}$ and $\varinjlim G \cong G_{b}$ and the rank of $G$ is identical to that of $g$. \label{item:trivial cases2}
\end{enumerate}
\end{example}

\begin{definition}[Generalized rank invariant]\label{def:fully generalized rank invariant} Let $\Pb$ be a locally finite, connected poset.  We define the \emph{(generalized) rank invariant} of a diagram $F:\Pb\rightarrow \C$ as the map \[\rk(F):\con(\Pb)\rightarrow \I(\C)\]
which sends each $I\in \con(\Pb)$ to the rank of the sub-diagram $F|_{I}$. 
\end{definition}

In the following example, we will see that the generalized rank invariant is either \emph{equivalent to} or \emph{a refinement of} the rank invariant that have been considered in the literature \cite{carlsson2009theory,cerri2013betti,patel2018generalized,puuska2017erosion}:
\begin{example}\label{ex:fully general rank invariant} 
\begin{enumerate}[label=(\roman*)]
    \item (1-parameter persistence) Consider the integers $\Z$. We have $\con(\Z)=\inter(\Z)$ and the generalized rank invariant of a diagram $F:\Z\rightarrow \vect$ coincides with the standard rank invariant \cite{carlsson2009theory,cohen2007stability}. This directly follows from the following observation: for any $a,b\in \Z$ with $a\leq b$, the rank of $\varphi_F(a, b)$ coincides with the rank of $F|_{[a,b]}$ given in Definition \ref{def:fully general rank}. This observation is still valid even if the target category of $F$ is other than $\vect$, implying that the rank invariant of $\Z\rightarrow \C$ in Definition \ref{def:fully generalized rank invariant} is essentially equal to that of \cite{patel2018generalized}. \label{item:fully general rank invariant-1-parameter}

    \item (Zigzag persistence)  Consider the zigzag poset $\ZZ$. Note that $\con(\ZZ)=\inter(\ZZ)$. The generalized rank invariant of a diagram $F:\ZZ\rightarrow \vect$ contains more information than the rank invariant defined in \cite{puuska2017erosion}. See Appendix \ref{sec:puuska}. More interestingly,  even though $\rk(F)$ is defined in way which makes no reference to whether  $F$ is interval decomposable or not , it will turn out that one can extract the barcode of $F$ from $\rk(F)$ (see Figure \ref{fig:inclusion-exclusion} (A)). This implies that the generalized rank invariant is a \emph{complete} invariant for zigzag modules, i.e. any two zigzag modules that have the same rank invariant are isomorphic. This is a generalization of the completeness of the rank invariant for $\Z$-indexed persistence (Theorem \ref{thm:persistence module}). \label{item:fully general rank invariant-zigzag} 
    \item (Multiparameter persistence) Consider the $2$-dimensional grid poset $\Z^2$ and a functor $F:\Z^2\rightarrow \vect$. For any $(a,b),(c,d)\in \Z^2$ with $(a,b)\leq (c,d)$, the rank of $\varphi_F((a,b),(c,d))$ coincides with the rank of the restricted diagram $F|_{[(a,b),(c,d)]}$, in the sense of Definition \ref{def:fully general rank}. However, the standard rank invariant \cite{carlsson2009theory} does not record the rank of $F|_{I}$ when $I$ is not in the form of $[(a,b),(c,d)]$. See Figure \ref{fig:refinement} for an illustrative example. This implies that the generalized rank invariant is a refinement of the standard rank invariant for 2-parameter persistence modules $\Z^2\rightarrow \vect$. Similar argument applies to diagrams $\Z^d\rightarrow \vect$ for $d> 2$.\label{item:fully general rank invariant-multiparameter} 
\end{enumerate}
\end{example}

\begin{figure}
    \centering
    \includegraphics[width=0.35\textwidth]{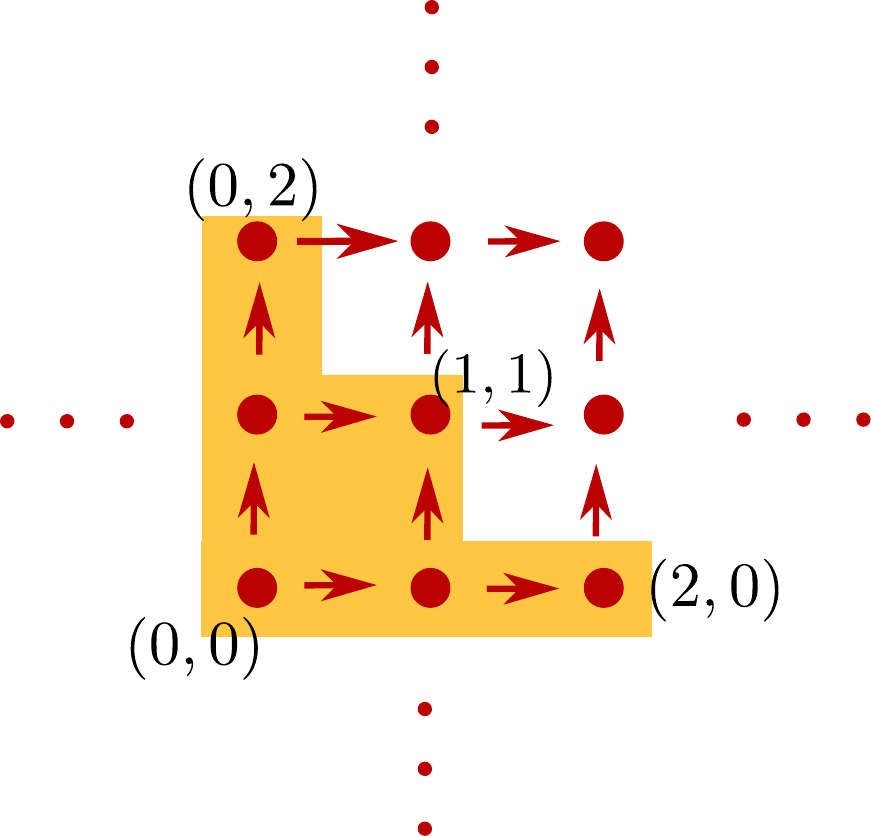}
    \caption{Consider $I\in\con(\Z^2)$ consisting of 6 points in $\Z^2$, depicted as above. The standard rank invariant of $F:\Z^2\rightarrow \vect$ does not record the rank of $F|_I$, whereas the generalized rank invariant of $F$ does.}
    \label{fig:refinement}
\end{figure}

\paragraph{Order-reversing property of the rank invariant.} It is well-known that there exists a translation-invariant order $\leq $ on the Grothendieck group $\A(\C)$ of $\C$ \cite[Section 6.1]{patel2018generalized}, \cite{weibel2013k}. For example, both $(\A(\vect),\leq)$ and $(\A(\sets),\leq)$ are isomorphic to $(\Z,\leq)$.  In analogy with the case of standard persistent modules\footnote{Given a diagram $F:\Z\rightarrow \vect$, it holds that
$\rank \ \varphi_F(i',j')\leq \rank \ \varphi_F(i,j)$  for $i'\leq i \leq j \leq j'$ in $\Z$.
}, if we assume that the target $\C$ satisfies certain properties, then the generalized rank invariant is also order-reversing. To establish this result, we first introduce a proposition which is a slight extension of \cite[Lemma 3.4]{puuska2017erosion}:

\begin{proposition}\label{lem:order reversing property}Let $\C$ be an essentially small, symmetric monoidal category with \emph{epimorphic} images (Definition \ref{def:images}). Also, assume that, for all $a,b\in \ob(\C)$, 
\begin{enumerate}[label=(\roman*)]
    \item $a\hookrightarrow b\Rightarrow [a]\leq [b]$ in $\A(\C)$,\label{item:injection}
    \item $a\twoheadrightarrow b \Rightarrow [b]\leq [a]$ in $\A(\C)$.\label{item:surjection}
\end{enumerate}
Let $f: a \rightarrow b$, $g: a' \rightarrow a$, $h: b \rightarrow b'$ be morphisms in $\C$ and denote $f' = h\circ f\circ g.$ Then, $\left[\im(f')\right] \leq \left[\im (f)\right]$ in $\A(\C)$.
\end{proposition}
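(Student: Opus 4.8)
The plan is to factor the morphism $f' = h \circ f \circ g$ through $\im(f)$ and then build a chain of monomorphisms and epimorphisms connecting $\im(f')$ to $\im(f)$, after which hypotheses \ref{item:injection} and \ref{item:surjection} finish the job. First I would recall that, since $\C$ has epimorphic images, $f$ factors as $a \twoheadrightarrow \im(f) \hookrightarrow b$; call the epimorphism $e_f: a \twoheadrightarrow \im(f)$ and the monomorphism $m_f: \im(f) \hookrightarrow b$. Then $f' = h \circ m_f \circ e_f \circ g$, and in particular $f'$ factors through $\im(f)$ via the morphism $e_f \circ g: a' \to \im(f)$ followed by the monomorphism-valued composite $h \circ m_f$ — except that $h\circ m_f$ need not be monic, so I cannot directly invoke the universal property of images against it. Instead, the cleaner route is to compare $\im(f')$ with $\im(m_f \circ e_f \circ g)$, i.e. to apply the argument to $h = \id_b$ first and to $g = \id_a$ first, handling pre-composition and post-composition separately, then composing the two inequalities.

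So I would split the claim into two lemmas: (a) $[\im(f \circ g)] \leq [\im(f)]$ for any $g: a' \to a$, and (b) $[\im(h\circ f)] \leq [\im(f)]$ for any $h: b\to b'$. For (a): write $f = m_f \circ e_f$ as above; then $f\circ g = m_f \circ (e_f \circ g)$, and since $m_f$ is monic, by the universal property of the image (Definition \ref{def:images}, using the monomorphism $m_f$ and the morphism $e_f\circ g$) there is a monomorphism $u: \im(f\circ g) \hookrightarrow \im(f)$ — this is exactly the content of Remark \ref{rem:monic}. Hence $[\im(f\circ g)] \leq [\im(f)]$ by \ref{item:injection}. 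For (b): write $f = m_f \circ e_f$, so $h\circ f = (h\circ m_f)\circ e_f$; since $e_f$ is an epimorphism and $\im(h\circ f)$ receives an epimorphism from the source, I want to say the canonical map $\im(f) \to \im(h\circ f)$ is epic. Concretely, $h\circ m_f: \im(f)\to b'$ factors as $\im(f) \twoheadrightarrow \im(h\circ m_f) \hookrightarrow b'$; precomposing with the epimorphism $e_f$ shows $\im(h\circ f) = \im(h\circ m_f\circ e_f) = \im(h\circ m_f)$ (the image is unchanged under precomposition with an epi, since an epi is right-cancellative in the factorization). Thus there is an epimorphism $\im(f) \twoheadrightarrow \im(h\circ m_f) = \im(h\circ f)$, giving $[\im(h\circ f)]\leq[\im(f)]$ by \ref{item:surjection}. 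Combining, $[\im(f')] = [\im(h\circ f\circ g)] \leq [\im(h\circ f)] \leq [\im(f)]$ — wait, I should apply (a) to $h\circ f$ in place of $f$: $[\im((h\circ f)\circ g)] \leq [\im(h\circ f)] \leq [\im(f)]$, which is the desired conclusion.

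The main obstacle I anticipate is the bookkeeping around the fact that post-composition does not interact with images as cleanly as pre-composition: the universal property of $\im(f)$ in Definition \ref{def:images} is stated in terms of monomorphisms into the \emph{codomain}, so it directly yields the comparison map for pre-composition (step (a)) but requires the extra observation ``image is invariant under pre-composition with an epimorphism'' to handle post-composition (step (b)). I would want to verify that this invariance really only uses that $e_f$ is epic and that $\C$ has (epimorphic) images — it does, since if $f = m\circ e$ with $e$ epic and $m$ monic, then for any factorization $h\circ f = m'\circ g'$ through a mono $m'$, one checks $h\circ m$ factors through $m'$ by cancelling $e$ on the right, so $\im(h\circ f)$ and $\im(h\circ m)$ have the same universal property. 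The only other point to be careful about is that $u$ in step (a) is genuinely a monomorphism and not merely a morphism; this is precisely Remark \ref{rem:monic}, which I can cite. Everything else is a routine diagram chase, and the order-preservation hypotheses \ref{item:injection}–\ref{item:surjection} plug in at the very end.
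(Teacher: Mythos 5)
Your overall strategy is the same as the paper's: factor $f$ as $a\twoheadrightarrow\im(f)\stackrel{m_f}{\hookrightarrow}b$, compare $\im(f')$ to the intermediate object $\im(h\circ m_f)$ via a monomorphism supplied by the universal property of images (Remark \ref{rem:monic}), compare $\im(h\circ m_f)$ to $\im(f)$ via the epimorphism in the image factorization of $h\circ m_f$, and finish with hypotheses \ref{item:injection} and \ref{item:surjection}. Your step (a) is correct and is exactly the mechanism producing the monomorphism $m_2$ in the paper's diagram (\ref{eq:big rank diagram}).

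The justification of your step (b), however, rests on a false auxiliary claim: in a general category with epimorphic images it is \emph{not} true that $\im(k\circ e)\cong\im(k)$ whenever $e$ is an epimorphism. Right-cancellability lets you cancel $e$ from an equation $u\circ e=v\circ e$; it does not let you turn a factorization of $k\circ e$ through a monomorphism $m'$ into a factorization of $k$ through $m'$. Concretely, in the category of rings (which has epimorphic images), $e:\mathbb{Z}\rightarrow\mathbb{Q}$ is an epimorphism and $k=\id_{\mathbb{Q}}$ satisfies $\im(k\circ e)=\mathbb{Z}\neq\mathbb{Q}=\im(k)$. Fortunately your argument does not need the equality: applying the universal property of $\im(h\circ f)$ to the factorization $h\circ f=(h\circ m_f)\circ e_f$ through the monomorphism $\im(h\circ m_f)\hookrightarrow b'$ gives a monomorphism $\im(h\circ f)\hookrightarrow\im(h\circ m_f)$ (the same mechanism as your step (a)), and together with the epimorphism $\im(f)\twoheadrightarrow\im(h\circ m_f)$ this yields $\left[\im(h\circ f)\right]\leq\left[\im(h\circ m_f)\right]\leq\left[\im(f)\right]$ by \ref{item:injection} and \ref{item:surjection}. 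With that one-line repair your proof is complete, and it coincides with the paper's, which runs the chain $\left[\im(f')\right]\leq\left[\im(h\circ m_1)\right]\leq\left[\im(f)\right]$ in a single diagram without separating pre- and post-composition.
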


\begin{proof}By the universal properties of images (which are epimorphic), we have the following commutative diagram:
\begin{equation}\label{eq:big rank diagram}
\begin{tikzcd}
  a' \arrow[rrrrr, "f'"]\arrow[dr, "g"]\arrow[dddrrr, two heads, to path=|- (\tikztotarget)]&&&&& b'\\
  & a \arrow[rr, "f"]\arrow[d, two heads]&& b\arrow[urr, "h"]\\
  & \im(f) \arrow[rr, two heads, "e"']\arrow[urr, hook, swap, "m_1"] &&\im\left(h\circ m_1\right) \arrow[uurr, hook]\\
  &&& \im(f')\arrow[u, hook, "m_2"]\arrow[uuurr, hook, to path=-| (\tikztotarget)]
\end{tikzcd}
\end{equation}
In particular, by the epimorphism $e$ and the monomorphism $m_2$ (Remark \ref{rem:monic}) in the diagram, we have $\left[\im(f')\right]\leq \left[\im\left(h\circ m_1\right) \right]\leq \left[\im (f)\right]$, as desired.
\end{proof}

\begin{proposition}[Order-reversing property of the generalized rank invariant]\label{prop:order reversing property}
Let $\C$ be an essentially small, symmetric monoidal category satisfying the conditions in Convention \ref{convention} \ref{item:a full subcategory of a bicomplete category}. Let $\Pb$ be any locally finite, connected poset and let $F:\Pb\rightarrow \C$ be a functor. Then, whenever $I$ and $I'$ in $\con(\Pb)$ are such that $I\subset I'$, it holds that $\rk(F)(I')\leq \rk(F)(I)$ in $\A(\C)$.  
\end{proposition}

The order-reversing property of the rank invariant can sometimes be useful for the efficient computation of persistence diagrams we will define (Definition \ref{def:generalized PD2} and Example \ref{ex:Amit}). 

\begin{proof}[Proof of Proposition \ref{prop:order reversing property}]Fix $I,I'\in \con(\Pb)$ such that $I\subset I'$. Notice that the limit $\left(\varprojlim F|_{I'},(\pi'_a)_{a\in I'}\right)$ and colimit $\left(\varinjlim F|_{I'},(i'_a)_{a\in I'}\right)$ can be restricted to a cone and a cocone over $F|_{I}$, respectively (Remark \ref{rem:restriction}). Then by the terminal property of $\left(\varprojlim F|_{I},(\pi_a)_{a\in I}\right)$ and the initial property of $\left(\varinjlim F|_{I},(i_a)_{a\in I}\right)$, there exist unique morphisms $p:\varprojlim  F|_{I'}\rightarrow \varprojlim F|_{I}$ and $\iota:\varinjlim F_{I}\rightarrow \varinjlim F|_{I'}$ such that for every $a$ in $I$, $\pi_a'=\pi_a\circ p$ and $i_a'=\iota\circ i_a$.
Let $\psi_F(I):\varprojlim F|_{I}\rightarrow \varinjlim F|_{I}$ and  $\psi_M(I'):\varprojlim F|_{I'}\rightarrow \varinjlim F|_{I'}$ be the canonical LC maps. Fix $a\in I$. Then \[\psi_M(I')=i'_a\circ \pi'_a = (\iota\circ i_a)\circ(\pi_a\circ p)=\iota\circ (i_a\circ \pi_a) \circ p=\iota\circ \psi_M(I)\circ p.\]

We first show that $\im(\psi_F(I))$ belongs to $\ob(\C)$, not $\ob(\D)\setminus \ob(\C)$. 
By the universal property of $\im(i_a\circ \pi_a)(=\im(\psi_F(I)))$ and $\im(i_a)$, there exist monomorphisms $\im(i_a\circ \pi_a)\hookrightarrow \im(i_a)\hookrightarrow \varinjlim F|_I$ (Remark \ref{rem:monic}). Also, since $\C$ has epimorphic images, we have an epimorphism from $F_a\in \ob(\C)$ to $\im(i_a)$. Then, by assumption \ref{item:epi}, $\im(i_a)\in \ob(\C)$. Now, by assumption \ref{item:mono}, $\im(i_a\circ p_a)\in \ob(\C)$. By the same argument, one can check that $\im(\psi_F(I'))\in \ob(\C)$. 

Note that $\iota$ and $p$ are morphisms of $\D$, but not necessarily $\C$. However, in diagram (\ref{eq:big rank diagram}),  by replacing $f,f',g$ and $h$ by $\psi_M(I),\psi_M(I'),p$ and $\iota$ respectively, and invoking assumptions \ref{item:mono} and \ref{item:epi} in Proposition \ref{prop:order reversing property}, we have $\rk(F)(I')\leq \rk(F)(I)$ in $\A(\C)$ as desired. 
\end{proof}

\subsection{Persistence diagrams for persistence modules over posets}\label{subsec:rank invariant in general}

 In this section we define the generalized persistence diagram of a functor $\Pb\rightarrow \C$ where $\Pb$ is a connected and \emph{essentially finite} poset (defined below), and $\C$ satisfies the conditions in Convention \ref{convention}.
 
\paragraph{Essentially finite posets.} 
A locally finite poset $\Pb$ is said to be \emph{essentially finite} if every  $I\in \con(\Pb)$ has a finite \emph{perimeter}:   

\begin{definition}[Neighborhood and perimeter]\label{def:perimeter} Let $\Pb$ be a locally finite poset. For $I\in \con(\Pb)$, we define the \emph{neighborhood} of $I$ as
\[\nbd(I):=\{p\in \Pb\setminus I: \mbox{there exists}\  q\in I \,\,\mbox{such that}\,\,q\asymp p\}.
\]
The \emph{perimeter} $o_{I}$ of $I$ is defined as the cardinality of $\nbd(I)$ (note that if $I$ is a singleton $\{p\}$, then $\nbd(I)$ is the neighborhood of the vertex $p$ in the graph $\Hasse(\Pb)$). 
\end{definition}

We remark that, for each interval $I$ of $\Z$, the perimeter of $I$ is either $0,1$ or $2$: If $I=\Z$, then the perimeter is $0$. For intervals of the form $I=[b,\infty)$, $b\in\Z$ or $I=(-\infty,d]$, $d\in \Z$, the perimeter of $I$ is $1$.  If $I=[b,d]$ for some $b,d\in \Z$, then perimeter of $I$ is $2$. Similarly, for the poset $\ZZ$, the perimeter of any interval of $\ZZ$ is either $0,1$ or $2$.

\begin{definition}[Essentially finite posets]\label{def:essentially} A poset $\Pb:=(\Pb,\leq)$ is said to be \emph{essentially finite} if 
\begin{enumerate}[label=(\roman*)]
    \item $(\Pb,\leq)$ is locally finite, and
    \item For each $I\in \con(\Pb)$, the perimeter $o_I$ of $I$ is finite.
\end{enumerate}
\end{definition}

Examples of essentially finite posets include all finite posets, the integers $\Z$, and $\ZZ$. For $d>1$, the infinite $d$-dimensional integer grid $\Z^d$  is locally finite, but \emph{not} essentially finite. For example, the interval $\Z\times\{0\}$ of $\Z^2$ has the infinite neighborhood $\Z\times \{1,-1\}$.

\paragraph{Persistence diagrams for generalized persistence modules.} The following notation is useful for defining generalized persistence diagrams.

\begin{notation}[$n$-th entourage]\label{nota:I^n} Let $\Pb$ be a connected, essentially finite poset and let $I\in \con(\Pb)$. Fix $n\in \N$. By $I^n$, we denote the set of all $J\subset \Pb$ such that $I\subset J\subset I\cup\nbd(I)$ and $\abs{J\cap \nbd(I)}=n$. In other words, each $J\in I^n$ is obtained by adding $n$ points of $\nbd(I)$ to $I$. We refer to $I^n$ as the \emph{$n$-th entourage} of $I$.
\end{notation}

\begin{Remark}  Let $o_I$ be the perimeter of $I$ (Definition \ref{def:perimeter}).
\begin{enumerate}[label=(\roman*)]
\item For example, $I^1 = \{I\cup\{p\}:p \in \nbd(I) \},$ and $\abs{I^1} = o_I$. In general, for $n\leq o_I$, one has $\abs{I^n} = \binom{o_I}{n}$, whereas for $n>o_I$, $I^n=\emptyset$.
  
\item  Any  $J\in I^n$ belongs to $\con(\Pb)$.
\end{enumerate}
\end{Remark}

By Remark \ref{rem:counterpart}, we can identify the Grothendieck group $\A(\vect)$ with the integer group $(\Z,+)$. Given an $F:\Z\rightarrow \vect$ and $[a,b]\in \inter(\Z)=\con(\Z)$ with $a<b$, let us recall that in the standard persistence diagram of $F$ the multiplicity of $(a,b)\in \Z^2$  is defined as \cite{cohen2007stability}:
\[\rk(F)([a,b])-\rk(F)([a-1,b])-\rk(F)([a,b+1])+\rk(F)([a-1,b+1]).
\]

This formula is a motivation for:

\begin{definition}[Generalized persistence diagram of  $\Pb\rightarrow \C$]\label{def:generalized PD2} Let $\Pb$ be a connected, essentially finite poset. Given an $F:\Pb\rightarrow \C$, let us define the persistence diagram $\dgm^{\Pb}(F):\con(\Pb)\rightarrow \A(\C)$ of $F$ by sending each $I\in \con(\Pb)$ to
\begin{equation}\label{eq:mobius formula}
    \dgm^{\Pb}(F)(I):=\rk(F)(I)-\sum_{J\in I^1}\rk(F)(J)+\sum_{K\in I^2}\rk(F)(K)-\ldots+(-1)^{o_I}\sum_{L\in I^{o_I}}\rk(F)(L).
\end{equation}
\end{definition}

At the end of this section we will show that $\dgm^{\Pb}(F)$ is the M\"obius inversion of $\rk(F)$ over the poset $\con^{\mathrm{op}}(\Pb)$, generalizing the framework of \cite{patel2018generalized}. Now, by invoking the principle of inclusion-exclusion, we prove that the barcode of interval decomposable persistence modules (Definition \ref{def:barcode}) can be obtained via formula (\ref{eq:mobius formula}): 

\begin{theorem}[Generalized persistence diagram recovers the barcode]\label{prop:completeness} Let $\Pb$ be any connected, essentially finite poset and let  $F:\Pb\rightarrow \vect$ be interval decomposable.  For  $I\in \con(\Pb)$, the value given by equation (\ref{eq:mobius formula}) is equal to the multiplicity of $I$ in $\barcp(F)$ (this implies that if $I\in \con(\Pb)\setminus \inter(\Pb)$, then the value is zero).
\end{theorem}

See Figure \ref{fig:inclusion-exclusion} for illustrative examples of applications of Theorem \ref{prop:completeness}. We remark that formula (\ref{eq:mobius formula}) can  be simplified in special cases, e.g. see \cite{botnan2020rectangle} for the case of rectangle decomposable $\Z^2$-indexed persistence modules. We will obtain a proof of Theorem  \ref{prop:completeness} via an elementary argument resting upon the inclusion-exclusion principle. This proposition can also be obtained as a corollary to Propositions \ref{prop:original mobius} and \ref{prop:coincide}, based on standard results from incidence algebra.    

\begin{figure}
    \centering
    \includegraphics[width=0.9\textwidth]{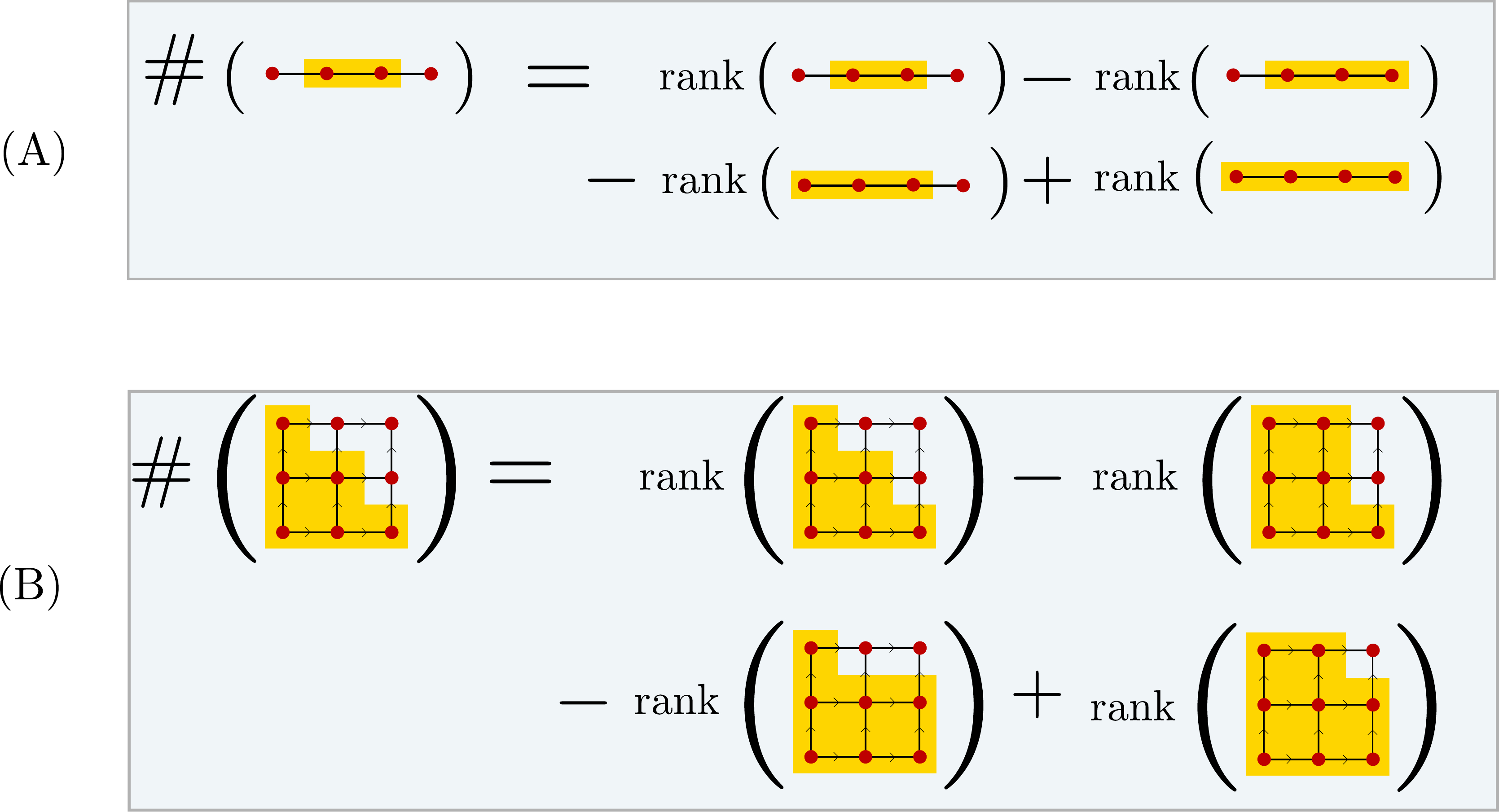}
    \caption{(A) Let $\leftrightarrow$ denote $\leq$ or $\geq$. Given any $\vect$-diagram $M$ indexed by $\{1\leftrightarrow 2\leftrightarrow 3\leftrightarrow 4\}$, $M$ is interval decomposable. The multiplicity of the interval $[2,3]$  in the barcode of $M$ is equal to $\mathrm{rank}(M|_{[2,3]})-\mathrm{rank}(M|_{[2,4]})-\mathrm{rank}(M|_{[1,3]})+\mathrm{rank}(M|_{[1,4]})$. (B) Given any interval decomposable persistence module $N$ over the grid $\{1,2,3\}\times\{1,2,3\}$, the multiplicity of the interval  indicated by the LHS in the barcode of $N$ can be computed in a similar way.}
    \label{fig:inclusion-exclusion}
\end{figure}

\begin{Remark}\label{rem:rk is complete}
Since the barcode of an interval decomposable $F:\Pb\rightarrow \vect$ is a complete invariant of $F$, Theorem \ref{prop:completeness} implies that the generalized rank invariant of $F$ (Definition \ref{def:fully generalized rank invariant}) is also a complete invariant for $F$.
\end{Remark}

\begin{Remark}\label{rem:implication2} Theorem \ref{prop:completeness} also implies the following:
Given any $G:\Pb\rightarrow \vect$, (1) if there exists $I\in \con(\Pb)\setminus \inter(\Pb)$ such that $\dgm^{\Pb}(G)(I)\neq 0$, then $G$ is \emph{not} interval decomposable. (2) If there exists $I\in \inter(\Pb)$ such that $\dgm^{\Pb}(G)(I)<0$, then $G$ is \emph{not} interval decomposable: See Example \ref{ex:Amit}. (3) The converse of statement in item (2) is not true: See Example \ref{ex:Alex}.
\end{Remark}

Let us recall the following folklore fact for $\Z$-indexed modules: Let $F:\Z\rightarrow \vect$ and let $a,b\in \Z$ with $a\leq b$. Then, the rank of $\varphi_F(a, b)$ is equal to the total number of intervals $J\in \barc^{\Z}(F)$ such that $[a,b]\subset J$. In our setting, this fact generalizes to:

\begin{proposition}[Barcode recovers the generalized rank invariant]\label{rem:implication} Let $\Pb$ be a connected, locally finite poset. For any interval decomposable $F:\Pb\rightarrow \vect$ and $J\in \con(\Pb)$, $\rk(F)(J)$ is equal to the total multiplicity of intervals $K\in\barcp(F)$ such that $K\supset J$. 
\end{proposition}
Here we provide a succinct proof of Proposition \ref{rem:implication} \emph{under the extra assumption} that $\varprojlim F|_J$ is of finite dimension. A general proof is deferred to the appendix due to its length (Section \ref{sec:proof}).

\begin{proof}[Proof in the case where $\varprojlim F|_J$ is finite dimensional] 
Since $F$ is interval decomposable, there exists an indexing set $C$ such that $F\cong \bigoplus_{c\in C} I^{K_c}$, where each $K_c$ is an interval of $\Pb$. Let us recall that (1) colimits preserve direct sums, and (2) limits preserve direct products \cite[Theorem V.5.1]{mac2013categories}. By virtue of the extra assumption that $\varprojlim F|_J$ is finite dimensional (as well as for each $p\in \Pb$, $F_{p}$ is finite dimensional), the notions of direct product and direct sum coincide in the category of cones over $F$. Therefore, 
\[\rk(F)(J)=\rank(F|_J)=\rank\left(\bigoplus_{c\in C} I^{K_c}|_J\right)=\sum_{c\in C}\rank\left(I^{K_c}|_{J}\right).
\]
Note that, for every $c\in C$, $
\rank\left(I^{K_c}|_{J}\right)=\begin{cases}1,&  K_c \subset J\\ 0,&\mbox{otherwise.} \end{cases}$ (cf. Remark \ref{rem:rank is finite}). Therefore, the right-most summation is the total multiplicity of intervals $K_c\in\barcp(F)$ such that $K_c\supset J$.
\end{proof}

\begin{proof}[Proof of Theorem \ref{prop:completeness}] Let $\barcp(F)=\lmulti J_c: F\cong \bigoplus_{c\in C} I^{J_c}\rmulti$ for some indexing set $C$. Let $a_0(I)$ be the cardinality of the set $\{c\in C: I\subset J_c\}$. Also, let $a_1(I)$ be the cardinality of the set $\{c\in C: I\subsetneq J_c\}.$ Since the difference $a_0(I)-a_1(I)$ is the multiplicity of $I$ in $\barcp(F)$, it suffices to show that $a_0(I)-a_1(I)$ is identical to the value given by formula (\ref{eq:mobius formula}). First, by Proposition \ref{rem:implication},
$a_0(I)=\rk(F)(I)$.

Next we compute $a_1(I)$. For any $K\in \con(\Pb)$, let 
$B_F(K):=\left\{ c\in C: K\subset J_c \right\}.$

Let $K\in \con(\Pb)$ be such that $I\subsetneq K$. Let us show that $K$ must include at least one element of $\nbd(I)$: Pick any $k\in K\setminus I$ and any $p\in I$. Since the induced subgraph of $\Hasse(\Pb)$ on $K$ is connected, there exist $p=q_0,q_1,\ldots,q_m=k$ in $K$ such that $q_i$ and $q_{i+1}$ are adjacent (i.e. $q_i\asymp q_{i+1}$) in the subgraph for all $i$. Since $p=q_0\in I$ and $q_m=k\in K\setminus I$, there must be $i\in\{1,\ldots,m\}$ such that $q_i$ belongs to $\nbd(I)$. 

Now, we have the equality \[\{c\in C: I\subsetneq J_c\}=\bigcup_{K\in I^1} B_F(K)\ \mbox{(see Notation \ref{nota:I^n})},\] and hence $a_1(I)=\abs{\bigcup_{K\in I^1} B_F(K)}$. In particular, since $F$ is a diagram of finite dimensional vector spaces,  the set $\{c\in C: I\subsetneq J_c\}$ and all the sets $B_F(K)$ must be finite sets. Therefore, by  the principle of inclusion-exclusion and Proposition \ref{rem:implication}, we have: %
\begin{align*}
    a_1(I)&=\sum_{J\in I^1}\abs{B_F(J)}-\sum_{K\in I^2}\abs{B_F(K)}+\ldots+(-1)^{o_I-1}\sum_{L\in I^{o_I}}\abs{B_F(L)}
    \\&=\sum_{J\in I^1}\rk(F)(J)-\sum_{K\in I^2}\rk(F)(K)+\ldots+(-1)^{o_I-1}\sum_{L\in I^{o_I}}\rk(F)(L),
\end{align*}
completing the proof.
\end{proof}

\paragraph{M\"obius function and another interpretation of Definition \ref{def:generalized PD2}.} Let $\Q$ be a locally finite poset. The M\"obius function $\mu_{\Q}:\Q\times\Q\rightarrow \Z$ of $\Q$ is defined\footnote{More precisely, the codomain of $\mu_{\Q}$ is the multiple of $1$ in a specified base ring rather than $\Z$.} recursively \cite{rota1964foundations} as

\begin{equation}\label{eq:induction}
    \mu_{\Q}(p,q)=\begin{cases}1,& \mbox{$p=q$,}\\ -\sum_{p\leq r< q}\mu_{\Q}(p,r),& \mbox{$p<q$,} \\ 0,& \mbox{otherwise.} \end{cases}
\end{equation}

\begin{proposition}[M\"obius inversion formula {\cite[Proposition 2 (p.344)]{rota1964foundations}}]\label{prop:original mobius} Let $\Q$ be a locally finite poset and let $k$ be a field. Suppose that an element $0\in \Q$ exists with the property that $0\leq q$ for all $q\in \Q$. Consider a pair of functions $f,g:\Q\rightarrow k$ with the property that
\[g(q)=\sum_{r\leq q} f(r).\]
Then, $\displaystyle f(q)=\sum_{r\leq q} g(r)\cdot \mu_{\Q}(r,q)$ for $q\in \Q$.
\end{proposition}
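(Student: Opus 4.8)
The plan is to reduce everything to a single combinatorial identity satisfied by $\mu_{\Q}$ and then run a bottom-up induction over $\Q$. First I would record that the recursion (\ref{eq:induction}) is equivalent to the ``row'' identity
\[\sum_{p\leq r\leq q}\mu_{\Q}(p,r)=\delta_{p,q}\qquad\text{for all }p\leq q\text{ in }\Q,\]
where $\delta_{p,q}=1$ if $p=q$ and $\delta_{p,q}=0$ otherwise: for $p=q$ this is just $\mu_{\Q}(p,p)=1$, and for $p<q$ it is the second line of (\ref{eq:induction}) after moving the sum to the left-hand side. The hypotheses on $\Q$ enter here in a clean way: local finiteness together with the existence of a global minimum $0$ means that, for each fixed $q$, the down-set $\{r\in\Q:r\leq q\}$ equals the finite interval $[0,q]$, so all sums appearing in the statement are finite and an induction on the integer $n:=\bigl|[0,q]\bigr|$ is well-founded.

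Next I would carry out that induction. For the base case $n=1$ we have $q=0$, and $\sum_{r\leq 0}g(r)\,\mu_{\Q}(r,0)=g(0)\,\mu_{\Q}(0,0)=g(0)=\sum_{r\leq 0}f(r)=f(0)$. For the inductive step, fix $q$ and assume the formula holds for every $p<q$ (legitimate since $p<q$ forces $[0,p]\subsetneq[0,q]$, hence $\bigl|[0,p]\bigr|<n$). Isolating the top term in $g(q)=\sum_{r\leq q}f(r)$ gives $f(q)=g(q)-\sum_{r<q}f(r)$; substituting the inductive hypothesis $f(r)=\sum_{s\leq r}g(s)\,\mu_{\Q}(s,r)$ for each $r<q$ and interchanging the order of summation yields
\[f(q)=g(q)-\sum_{s<q}g(s)\sum_{s\leq r<q}\mu_{\Q}(s,r).\]
For each $s<q$ the row identity gives $\sum_{s\leq r\leq q}\mu_{\Q}(s,r)=0$, i.e. $\sum_{s\leq r<q}\mu_{\Q}(s,r)=-\mu_{\Q}(s,q)$; plugging this in and using $\mu_{\Q}(q,q)=1$ to reabsorb the $g(q)$ term gives $f(q)=\sum_{s\leq q}g(s)\,\mu_{\Q}(s,q)$, as desired.

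The one subtlety worth flagging is the direction of the induction. The most obvious move --- substituting $g(r)=\sum_{s\leq r}f(s)$ into the claimed right-hand side and interchanging sums --- leaves one needing the ``column'' identity $\sum_{s\leq r\leq q}\mu_{\Q}(r,q)=\delta_{s,q}$, which is \emph{not} what (\ref{eq:induction}) delivers directly (it would require knowing that the one-sided inverse of the zeta function in the incidence algebra of $\Q$ is also a two-sided inverse). Organizing the proof as a bottom-up induction sidesteps this entirely, since it only ever uses the row identity, which is immediate from the recursion. I would mention that, alternatively, one can work inside the incidence algebra of $\Q$ over $k$ --- where (\ref{eq:induction}) says $\mu_{\Q}$ is a one-sided convolution inverse of $\zeta$, which is therefore two-sided because $\zeta(p,p)=1$ is a unit, and the inversion formula then becomes associativity of convolution --- but I would keep the elementary induction in the text to avoid setting up that machinery.
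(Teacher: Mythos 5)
Your proof is correct. Note that the paper offers no proof of this proposition at all: it is quoted verbatim from Rota's foundational paper (Proposition 2, p.~344 of the cited reference), so there is nothing internal to compare against. Your bottom-up induction on $\bigl|[0,q]\bigr|$ is the standard argument, your use of local finiteness together with the global minimum $0$ to guarantee that every down-set $\{r : r\leq q\}$ is the finite interval $[0,q]$ is exactly where those hypotheses are needed, and your observation that the recursion (\ref{eq:induction}) yields only the ``row'' identity $\sum_{p\leq r\leq q}\mu_{\Q}(p,r)=\delta_{p,q}$ --- so that the naive substitution into the claimed formula would require the two-sided-inverse property of $\mu$ in the incidence algebra, which the induction sidesteps --- is a genuine subtlety that is often glossed over. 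No gaps.
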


Under the assumption that $\con(\Pb)$ is locally finite, formula (\ref{eq:mobius formula}) is in fact the \emph{M\"obius inversion} of the rank invariant $\rk(F)$ over the poset $\con^{\mathrm{op}}(\Pb)$: 

\begin{proposition}[Extension of Patel's generalized persistence diagrams]\label{prop:coincide} Let $\Pb$ be a connected, essentially finite poset such that $(\con(\Pb),\subset)$ is locally finite. Let $F:\Pb\rightarrow \C$. Let $\mu:\con^{\mathrm{op}}(\Pb)\times \con^{\mathrm{op}}(\Pb)\rightarrow \Z$ be the M\"obius function of the poset $\con(\Pb)^{\mathrm{op}}$. 

Then, for $I\in \con^{\mathrm{op}}(\Pb)$
\[\dgm^{\Pb}(F)(I)= \sum_{J\supset I}\rk(F)(J)\cdot\mu(J,I).\]
\end{proposition}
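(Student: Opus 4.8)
The plan is to recognize formula (\ref{eq:mobius formula}) as literally the right-hand side of the M\"obius inversion formula, once we understand the M\"obius function of $\con(\Pb)^{\mathrm{op}}$ well enough. Concretely, I would first unwind the definitions: in $\con^{\mathrm{op}}(\Pb)$, the relation $J\leq I$ means $I\leq J$ in $\con(\Pb)$, i.e. $I\subset J$ with $\abs{J\setminus I}<\infty$. So the sum $\sum_{J\supset I}\rk(F)(J)\cdot \mu(J,I)$ is a sum over $J$ lying above $I$ in $\con^{\mathrm{op}}(\Pb)$, matching the shape of Proposition \ref{prop:original mobius} with $g=\rk(F)$ and $q=I$. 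The content of the proposition is thus entirely the identification of $\mu(J,I)$ with the signs $(-1)^{\abs{J\cap\nbd(I)}}$ appearing in (\ref{eq:mobius formula}), together with the observation that $\mu(J,I)=0$ unless $J\in I^n$ for some $0\le n\le o_I$.

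The key step is therefore a computation of the M\"obius function of the interval $[I, \cdot]$ in $\con^{\mathrm{op}}(\Pb)$, equivalently of the interval $[\,\cdot\,, I]$ in $\con(\Pb)$ with $I$ at the top. I would argue that for $J\in\con(\Pb)$ with $I\leq J$ (co-finite inclusion), the closed interval $\{H\in\con(\Pb): I\leq H\leq J\}$ is isomorphic, as a poset, to the Boolean lattice on the finite set $J\setminus I$: given $H$ in this interval, $H\setminus I$ is a subset of $J\setminus I$, and conversely every subset $S\subseteq J\setminus I$ with $I\cup S\in\con(\Pb)$ gives such an $H$. The subtlety is that not every subset $S$ yields a diagram-connected $I\cup S$; but here $J\setminus I\subset\nbd(I)$ is forced whenever $\mu(J,I)\neq 0$ — this is exactly the connectivity argument already used in the proof of Proposition \ref{prop:completeness} (any $H$ with $I\subsetneq H\subseteq J$ must contain a point of $\nbd(I)$, and iterating shows every point of $H\setminus I$ adjacent to something already forced in). Once $J\setminus I\subseteq\nbd(I)$, every $I\cup S$ for $S\subseteq J\setminus I$ is again diagram-connected, so the interval is genuinely Boolean of rank $\abs{J\setminus I}$, and the standard fact $\mu_{\mathrm{Bool}}(\emptyset,S)=(-1)^{\abs S}$ (provable by the recursion (\ref{eq:induction}) and the binomial identity $\sum_{k}\binom{n}{k}(-1)^k=0$) gives $\mu(J,I)=(-1)^{\abs{J\setminus I}}$, while $\mu(J,I)=0$ if $J\not\subseteq I\cup\nbd(I)$.

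Assembling: $\sum_{J\supset I}\rk(F)(J)\mu(J,I)$ collapses to $\sum_{n=0}^{o_I}(-1)^n\sum_{J\in I^n}\rk(F)(J)$, which is precisely (\ref{eq:mobius formula}), so $\dgm^{\Pb}(F)(I)=\sum_{J\supset I}\rk(F)(J)\mu(J,I)$. I would also remark that, strictly, Proposition \ref{prop:original mobius} is stated with a global bottom element $0$ and a field-valued function, whereas here we are working in $\con^{\mathrm{op}}(\Pb)$ (which has a bottom element, namely $\Pb$ itself, since $\Pb\in\con(\Pb)$ and $\Pb\le J$... wait — actually in $\con^{\mathrm{op}}$ the bottom is $\Pb$ only if $\abs{\Pb\setminus J}<\infty$, which need not hold) and with values in $\A(\C)$; so I would not invoke Proposition \ref{prop:original mobius} as a black box but instead prove the needed inversion directly from the local M\"obius computation above. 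That is the real point of the argument, and the main obstacle is precisely the bookkeeping that the relevant order intervals in $\con(\Pb)$ are Boolean — everything else is formal. The $\A(\C)$-valued (rather than field-valued) setting causes no difficulty since $\A(\C)$ is an abelian group and the M\"obius function takes integer values, so the inversion identity is purely a statement about $\Z$-linear combinations.
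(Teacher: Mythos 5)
Your reduction of the proposition to two facts about the M\"obius function --- that $\mu(J,I)=(-1)^{n}$ for $J\in I^{n}$, and that $\mu(J,I)=0$ for every other $J\supset I$ --- is exactly the paper's reduction, and your treatment of the first fact is fine: when $J\setminus I\subseteq\nbd(I)$ the order interval $[J,I]$ in $\con^{\mathrm{op}}(\Pb)$ really is Boolean on $J\setminus I$ (each $I\cup S$ is diagram-connected because every point of $S$ is Hasse-adjacent to $I$), and the recursion (\ref{eq:induction}) then gives $(-1)^{\abs{J\setminus I}}$. You are also right that Proposition \ref{prop:original mobius} cannot be invoked as a black box and that the whole content is the evaluation of $\mu$; the paper's proof proceeds the same way.

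The gap is in the vanishing statement. You assert that ``$J\setminus I\subseteq\nbd(I)$ is forced whenever $\mu(J,I)\neq 0$'' and justify it by the connectivity argument from the proof of Proposition \ref{prop:completeness}. That argument shows only that every $H$ with $I\subsetneq H\subseteq J$ meets $\nbd(I)$, and that each point of $H\setminus I$ is joined to $I$ by a Hasse path inside $H$; it does not show $H\setminus I\subseteq\nbd(I)$, and indeed that inclusion can fail. Take $\Pb=\Z$, $I=\{2\}$, $J=\{1,2,3,4\}$: here $\nbd(I)=\{1,3\}$, the interval $[J,I]$ has six elements (it is not Boolean, since $\{2,4\}$ is disconnected while $\{2,3,4\}$ is connected), and a direct computation with (\ref{eq:induction}) gives $\mu(J,I)=0$ --- but nothing in your argument produces this value. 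The vanishing for $J\not\subseteq I\cup\nbd(I)$ is a genuine claim needing its own proof; the paper obtains it by showing that $[J,I]$ is a finite lattice in which meets are unions (Lemma \ref{lem:lattice}), observing that the dual atoms of $[J,I]$ lie in $I^{1}$ so that such a $J$ is not the meet of dual atoms, and then applying P.~Hall's theorem (Proposition \ref{prop:Hall}) together with the locality of the M\"obius function. Some argument of this kind (Hall's theorem, a crosscut or closure-operator argument, or a direct induction) must be supplied; without it the sum over all $J\supset I$ does not collapse to the entourage sum in (\ref{eq:mobius formula}).
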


We prove Proposition \ref{prop:coincide} at the end of this section.

\begin{example}\label{ex:Amit} Let $\Pb:=\{a,b,c,d\}$ equipped with the partial order $\leq:=\{(a,b),(c,b),(d,b)\}\subset \Pb\times \Pb$. Then, $\Hasse(\Pb)$ is shown as in (A) below. Consider $F:\Pb\rightarrow \vect$ given as in (B) below,
\begin{center}
\begin{tikzcd}
  & a \arrow[d, dash] \\
    & b  \\
  c \arrow[ru, dash] && d \arrow[lu, dash]\\
  & \mbox{(A)}
 \end{tikzcd}
 \hspace{15mm}
 \begin{tikzcd}
  & k \arrow[d, hook, "i_1"] \\
    & k^2  \\
  k \arrow[ru, hook, "i_2"] && k \arrow[lu, hook, "i_1+i_2"]\\
  & \mbox{(B)}
 \end{tikzcd}
\end{center}
where $i_1,i_2:k\rightarrow k^2$ are the canonical inclusions into the first factor and the second factor of $k^2$, respectively. We show that $F$ is \emph{not} interval decomposable.

By Remark \ref{rem:implication2}, it suffices to show that, for $I:=\{b\}$, one has $\dgm^\Pb(F)(I)<0$. Observe that $I^1=\{\{a,b\},\{b,c\},\{b,d\}\}$, $I^2=\{\{a,b,c\},\{a,b,d\},\{b,c,d\}\}$, and $I^3=\{\{a,b,c,d\}\}=\{\Pb\}$, $I^n=\{\emptyset\}$ for $n>3$. Note that:

\begin{itemize}
    \item $\rk(F)(I)=2$, which is the dimension of $F(b)=k^2$ (cf. Example \ref{ex:trivial cases} \ref{item:trivial cases1}).
    \item $\rk(F)(J)=1$ for every $J\in I^1$ (cf. Example \ref{ex:trivial cases} \ref{item:trivial cases2}).
    \item $\rk(F)(J)=0$ for every $J\in I^2$: the sub-diagram $F|_J$ amounts to a zigzag module of length 3 \cite{carlsson2009zigzag}. It is not difficult to check that the barcode of $F|_J$ does not include the full interval $J$. This implies, by Proposition \ref{rem:implication}, $\rk(F)(J)=\rk(F|_J)(J)=0$. 
    \item $\rk(F)(\Pb)=0$: Fix $J\in I^2$. By Proposition \ref{prop:order reversing property}, $0\leq \rk(F)(\Pb)\leq \rk(F)(J)=0$.
\end{itemize}
Therefore, formula (\ref{eq:mobius formula}) gives that $\dgm^{\Pb}(F)(I)=2-(1+1+1)+(0+0+0)-0=-1$, completing the proof.
\end{example}

The non-negativity of the persistence diagram of a $G:\Pb\rightarrow \vect$ does not imply the interval-decomposability of $G$:

\begin{example}\label{ex:Alex} Let $\Pb$ and $F:\Pb\rightarrow \vect$ respectively be the same as Example \ref{ex:Amit}. Let $G$ be the direct sum of $F$ and the interval module $\mathcal{J}^{\{b\}}:\Pb\rightarrow \vect$ supported by $\{b\}$. Note that $\rk(G)=\rk(F)+\rk(\mathcal{J}^{\{b\}})$ and
\[\dgm^{\Pb}(G)(I)=\begin{cases}0,& I=\{b\} \\ \dgm^{\Pb}(F)(I),&\mbox{otherwise.}
\end{cases}
\]
In particular, observe that $\dgm^{\Pb}(G)$ is non-negative even though $G$ is not interval decomposable. Also, note that, for $H=I^{\{a,b\}}\bigoplus I^{\{b,c\}}\bigoplus I^{\{b,d\}}$, we have $\rk(G)=\rk(H)$ and in turn $\dgm^{\Pb}(G)=\dgm^{\Pb}(H)$. This shows that the generalized rank invariant and persistence diagram are \emph{not} complete invariants beyond the class of interval decomposable persistence modules. The following is also noteworthy: for the intervals $I=\{a,b\},\ \{b,c\},\ \{b,d\}$,   $G$ does not admit $\mathcal{J}^I$ as a summand even though $\dgm^{\Pb}(G)(I)=1$.
\end{example}

\begin{Remark} Given $F:\Pb\rightarrow \vect$, for each $K\in \inter(\Pb)$, let $n_K:=\dgm^{\Pb}(F)(K)$. When $\dgm^{\Pb}(F)$ is non-negative on $\inter(\Pb)$, the direct sum $G=\bigoplus_{K\in \inter(\Pb)}n_K\cdot I^K$ of interval modules could be seen as an ``approximation" of $F$. In particular, by Theorem \ref{prop:completeness} we know that $F\cong G$ whenever $F$ is interval decomposable. This observation is connected to \cite{asashiba2019approximation}, where the authors propose a method to approximate a diagram $F:\R^2\rightarrow \vect$ by an interval decomposable  $\delta^\ast(F):\R^2\rightarrow \vect$ whose (standard) rank invariant is the same as $F$.  In particular, $F=\delta^*(F)$ whenever $F$ is interval decomposable.
\end{Remark}

\paragraph{Towards a proof of Proposition \ref{prop:coincide}.} In order to prove Proposition \ref{prop:coincide}, we will exploit the poset structure of $\con^{\mathrm{op}}(\Pb)$. To this end, we briefly review the notion of \emph{lattice}. See  \cite{birkhoff1940lattice} for a comprehensive treatment on this subject. 

Let $\Lb$ be a poset, and let $S \subset \Lb$. An element $u \in \Lb$ is said to be an upper bound of $S$ if $s \leq u$ for all $s \in S.$ An upper bound $u$ of $S$ is said to be its least upper bound, or \emph{join} if $u \leq x$ for each upper bound $x$ of $S$. If a join of $S$ exists, then it is unique. The concepts of lower bound and greatest lower bound are defined in a dual way. In particular, a greatest lower bound is said to be a \emph{meet}.

\begin{definition}[Lattices]\label{def:lattice}Let $\Lb$ be a poset. $\Lb$ is said to be a \emph{join-semi lattice} if for every pair $a,b\in \Lb$, the set $\{a,b\}$ has a join in $\Lb$. Dually, $\Lb$ is said to be a \emph{meet-semi lattice} if for every pair $a,b\in \Lb$, the set $\{a,b\}$ has a meet in $\Lb$. If $\Lb$ is both join-semi lattice and meet-semi lattice, then $\Lb$ is said to be a \emph{lattice}.
\end{definition}

\begin{lemma}[$\con^{\mathrm{op}}(\Pb)$ is locally a lattice]\label{lem:lattice}
Let $\Pb$ be a connected, essentially finite poset such that $\con(\Pb)$ is locally finite. Let $I,J\in \con^{\mathrm{op}}({\Pb})$ with $J\supset I$. Then $[J,I]:=\{K\in \con^{\mathrm{op}}(\Pb): J\supset K \supset I\}$ is a finite lattice.
\end{lemma}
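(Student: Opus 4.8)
The plan is to work inside the finite interval $[J,I] = \{K \in \con^{\mathrm{op}}(\Pb): J \supset K \supset I\}$, which as a set consists of those $K$ with $I \subset K \subset J$ whose Hasse diagram is connected; finiteness is immediate since $|J \setminus I| < \infty$ by essential finiteness (more precisely, $J \in \con(\Pb)$ and $J \supset I$ with co-finite inclusion, so $J\setminus I$ is finite). The subtlety is that the ambient order on $\con^{\mathrm{op}}(\Pb)$ is reverse inclusion, so a ``join'' of $K_1, K_2$ in $[J,I]$ is a largest-possible common sub-object, i.e. an inclusion-wise \emph{smallest} connected set containing $I$ and contained in $K_1 \cap K_2$; dually a ``meet'' is an inclusion-wise \emph{largest} connected set sitting between $I$ and $K_1 \cup K_2$. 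So I must produce, for any $K_1,K_2 \in [J,I]$, both a least upper bound and a greatest lower bound in $([J,I], \supset)$.

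First I would handle the meet (greatest lower bound in $\supset$, i.e. a maximal element of $\{L \in \con(\Pb): I \subset L \subset K_1\cup K_2\}$). The key observation is that $K_1 \cup K_2$ itself is connected: $K_1$ and $K_2$ are each connected in $\Hasse(\Pb)$ and both contain $I \neq \emptyset$, so their union is a connected subgraph of $\Hasse(\Pb)$. Hence $K_1 \cup K_2 \in \con(\Pb)$, it lies between $I$ and $K_1\cup K_2$, and it is clearly the largest such set, so it is the meet: $K_1 \wedge K_2 = K_1 \cup K_2$ in $[J,I]$. (One should check $K_1 \cup K_2 \subset J$, which holds since $K_1, K_2 \subset J$.) Next, for the join (least upper bound in $\supset$): among all $K \in \con(\Pb)$ with $I \subset K \subset K_1 \cap K_2$, I want the inclusion-smallest one. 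The candidate set $\mathcal{S} := \{K \in \con(\Pb): I \subset K \subset K_1 \cap K_2\}$ is nonempty because $I$ itself belongs to it ($I \in \con(\Pb)$, and $I \subset K_1 \cap K_2$). The hard part is that, unlike connectivity, ``being connected'' is not closed under intersection, so $\bigcap \mathcal{S}$ need not be connected — I cannot simply take the intersection. Instead I would argue that $\mathcal{S}$, ordered by inclusion, is a \emph{finite} poset (it is a subset of the power set of the finite set $J\setminus I$, shifted by $I$) that is closed downward toward a unique minimum, or more carefully: I would show $\mathcal{S}$ has a minimum by showing that if $K, K' \in \mathcal{S}$ then there is $K'' \in \mathcal{S}$ with $K'' \subset K \cap K'$.

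The main obstacle is establishing that last claim — that $\mathcal{S}$ is closed under a ``connected intersection'' operation, equivalently that $\mathcal{S}$ has a minimum element. The natural route: given $K, K' \in \mathcal{S}$, the set $K \cap K'$ contains $I$ (which is connected) but might fail connectivity; however I claim the connected component of $K \cap K'$ containing $I$ — call it $K''$ — again lies in $\mathcal{S}$. Indeed $I \subset K''$ since $I$ is connected and contained in $K\cap K'$, $K''$ is connected by construction, and $K'' \subset K \cap K' \subset K_1 \cap K_2$; also $K''$ need not be convex in $\Pb$, but $\con(\Pb)$ only requires connectedness of the Hasse subgraph, not the interval condition, so $K'' \in \con(\Pb)$. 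Iterating this over all finitely many elements of $\mathcal{S}$ (take the $I$-component of $\bigcap_{K\in\mathcal S} K$) produces the minimum of $\mathcal{S}$, which is exactly $K_1 \vee K_2$ in $[J,I]$. Together the meet and join arguments show $[J,I]$ is a lattice, and finiteness was noted at the outset; I would conclude by remarking that in both constructions one stays within $[J,I]$ (all sets produced contain $I$ and are contained in $J$), so the bounds are genuinely computed in the sub-poset $[J,I]$ and not merely in $\con^{\mathrm{op}}(\Pb)$.
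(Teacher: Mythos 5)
There is a genuine error in your treatment of the join, caused by running the reverse-inclusion order in the wrong direction. In $\con^{\mathrm{op}}(\Pb)$ we have $K\leq L$ iff $K\supset L$, so an upper bound of $\{K_1,K_2\}$ in $[J,I]$ is a connected $U$ with $I\subset U\subset K_1\cap K_2$, and the \emph{least} upper bound is the one satisfying $U\supset X$ for every other upper bound $X$ --- i.e.\ the inclusion-wise \emph{largest} such set, not the smallest. Your candidate set $\mathcal S=\{K\in\con(\Pb):I\subset K\subset K_1\cap K_2\}$ contains $I$ itself, so $\bigcap_{K\in\mathcal S}K=I$ and the ``$I$-component of the intersection'' you construct is exactly $I$: all the care you take about intersections of connected sets is vacuous, and what you produce is the top element of $[J,I]$, which is an upper bound of everything but is the \emph{least} upper bound only in degenerate cases. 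Concretely, take $\Hasse(\Pb)$ a path $a-b-c-d$, $J=\{a,b,c,d\}$, $I=\{b\}$, $K_1=\{a,b,c\}$, $K_2=\{b,c,d\}$: the upper bounds of $\{K_1,K_2\}$ in $[J,I]$ are $\{b\}$ and $\{b,c\}$, and the join is $\{b,c\}$, whereas your construction returns $\{b\}$.

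The repair is the dual of what you attempted, and it is what the paper does: the join is $\bigcup\{L\in\con(\Pb):I\subset L\subset K_1\cap K_2\}$. This union is connected because every $L$ in the collection contains the nonempty connected set $I$, it lies between $I$ and $K_1\cap K_2$, and it contains every upper bound, hence is $\leq$ (i.e.\ $\supset$) all of them. Note that here, unlike in your sketch, no component-extraction is needed at all --- unions of connected sets through $I$ are automatically connected, which is precisely why the ``largest'' bound exists canonically while a ``smallest connected superset of $I$ inside $K_1\cap K_2$'' would in any case just be $I$. Your meet ($K_1\cup K_2$, connected for the same reason) and your finiteness argument ($|J\setminus I|<\infty$ from the co-finite inclusion order, so $[J,I]$ embeds in the power set of a finite set) are both correct and agree with the paper.
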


\begin{proof}
By assumption, $\con(\Pb)$ is locally finite and thus $[J,I]$ is finite. Fix any $K_1,K_2 \in [J,I]$. Observe that $K_1\cup K_2$ is the greatest lower bound for $\{K_1,K_2\}$ in the subposet $[J,I]$ of $\con^{\mathrm{op}}(\Pb)$. Also, observe that the least upper bound for $\{K_1,K_2\}$ is the union of all subposets $L$ in the subcollection
$\{L\in \con^{\mathrm{op}}(\Pb):  K_1\cap K_2 \supset L \supset I \}$ of $[J,I]$ (note that this subcollection contains $I$ and hence not empty).
This shows that $[J,I]$ is a lattice.
\end{proof}

An \emph{atom} in a poset is an element that covers a minimal element. A \emph{dual atom} is an element that is covered by a maximal element \cite{rota1964foundations}. 

\begin{proposition}[{\cite[Corollary (P. Hall), p.349]{rota1964foundations}}]\label{prop:Hall}Let $\Lb$ be a finite lattice with $0,1\in \Lb$ such that $0\leq l \leq 1$ for all $l\in \Lb$. If $0$ is not the meet of dual atoms of $\Lb$, or if $1$ is not the join of atoms, then $\mu_{\Lb}(0,1)=0$.
\end{proposition}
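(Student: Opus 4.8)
The plan is to deduce this from \emph{Weisner's theorem}, which I would first establish as a self-contained lemma: if $\Lb$ is a finite lattice with $0<1$, then for every $a\in\Lb$ with $a>0$ one has $\sum_{x\in\Lb,\ x\vee a=1}\mu_{\Lb}(0,x)=0$, and dually, for every $a\in\Lb$ with $a<1$ one has $\sum_{x\in\Lb,\ x\wedge a=0}\mu_{\Lb}(x,1)=0$. To prove the first identity I would fix $a>0$ and set $f(y):=\sum_{x:\,x\vee a=y}\mu_{\Lb}(0,x)$ for $y$ ranging over the interval $[a,1]$ of $\Lb$. For any $z\in[a,1]$ the sets $\{x\in\Lb:x\vee a\le z\}$ and $\{x\in\Lb:x\le z\}$ coincide, so summing $f$ over $\{y:a\le y\le z\}$ gives $\sum_{x\le z}\mu_{\Lb}(0,x)$, which by the recursion~(\ref{eq:induction}) equals $0$ whenever $z\ne 0$; since $z\ge a>0$ this is always the case. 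An immediate induction on $[a,1]$ (equivalently, Möbius inversion over that interval) then forces $f(y)=0$ for all such $y$, and $y=1$ is the asserted identity. The dual identity follows by running the same argument in $\Lb^{\mathrm{op}}$.

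With Weisner's theorem in hand I would argue by the two cases in the hypothesis. Suppose first that $1$ is not the join of the atoms of $\Lb$; let $a$ be that join, which exists because $\Lb$ is a finite lattice and satisfies $a>0$ since a finite lattice with $0<1$ contains at least one atom. By hypothesis $a<1$, so the dual Weisner identity applies: $\sum_{x:\,x\wedge a=0}\mu_{\Lb}(x,1)=0$. Every term with $x>0$ drops out, because such an $x$ lies above some atom $p$, and $p\le a$ as well, so $x\wedge a\ge p>0$; hence the only surviving term is $x=0$ and the identity reads $\mu_{\Lb}(0,1)=0$. Symmetrically, if $0$ is not the meet of the dual atoms of $\Lb$, take $a$ to be that meet, so $a<1$ and, by hypothesis, $a>0$; then in $\sum_{x:\,x\vee a=1}\mu_{\Lb}(0,x)=0$ every term with $x<1$ drops out, since such an $x$ lies below some dual atom $q$ and $a\le q$ gives $x\vee a\le q<1$, leaving only $x=1$ and again $\mu_{\Lb}(0,1)=0$.

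The only genuine work here is the proof of Weisner's theorem; the two-case deduction is then just the remark that in a finite lattice every nonzero element lies above an atom and every element strictly below the top lies below a dual atom. The one bookkeeping point to record is the degenerate lattice $\Lb=\{0\}$ with $0=1$: it has no atoms and no dual atoms, so the empty join of its atoms and the empty meet of its dual atoms both equal $0=1$, the hypothesis of the proposition cannot hold, and there is nothing to prove; in every other case the elements $a$ used above are well defined. (Alternatively, one could run P.~Hall's chain-counting formula for $\mu_{\Lb}(0,1)$, but the Weisner route above is shorter.)
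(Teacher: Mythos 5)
Your proof is correct. Note that the paper does not actually prove this proposition at all: it is quoted verbatim from Rota's paper as P.~Hall's corollary, so there is no in-paper argument to compare against. Your route --- establish Weisner's theorem $\sum_{x\,:\,x\vee a=1}\mu_{\Lb}(0,x)=0$ for $a>0$ (and its dual) by the telescoping/Möbius-inversion argument on the interval $[a,1]$, then specialize $a$ to the join of the atoms or the meet of the dual atoms --- is exactly the classical derivation, and it is in fact how Rota obtains the corollary, so nothing cheaper is being missed. All the individual steps check out: the identity $\{x:x\vee a\le z\}=\{x:x\le z\}$ for $z\ge a$, the vanishing of $\sum_{x\le z}\mu_{\Lb}(0,x)$ for $z>0$ via the defining recursion (\ref{eq:induction}), the existence of an atom below every $x>0$ and a dual atom above every $x<1$ in a finite lattice, and the handling of the degenerate one-point lattice. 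The one point you pass over silently is that the ``dual'' Weisner identity, obtained by running the argument in $\Lb^{\mathrm{op}}$, implicitly uses $\mu_{\Lb^{\mathrm{op}}}(q,p)=\mu_{\Lb}(p,q)$, i.e.\ that the one-sided recursion (\ref{eq:induction}) and its mirror image define the same function; this is the standard fact that the left and right inverses of the zeta function in the incidence algebra coincide, and it deserves a sentence, but it is not a gap in substance.
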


Let $\Pb$ be a connected, essentially finite poset such that $\con(\Pb)$ is locally finite. Let $I,J\in \con(\Pb)$  with $J\supset I$. Observe that, in the finite lattice $\Lb=[J,I]$ which is a subposet of $\con^{\mathrm{op}}(\Pb)$, all dual atoms belong to the first entourage $I^1$ of $I$ (Notation \ref{nota:I^n}).

\begin{proof}[Proof of Proposition \ref{prop:coincide}]
Invoking formula (\ref{eq:induction}), by elementary induction, it follows that for any $n\in\{1,\ldots,o_I\}$ and  for any $J\in \con^{\mathrm{op}}(\Pb)$ with $J\in I^n$, $\mu(J,I)=(-1)^n$. 

Now pick any $J\in \con^{\mathrm{op}}(\Pb)$ such that $J\supset I$ and $J\notin \bigcup_{n=1}^{o_I}I^n$. This implies that $J$ is not the union of elements in $I^1$ and in turn implies that $J$ is not the meet of dual atoms of $[J,I].$ By \cite[Proposition 4  (p.345)]{rota1964foundations}, the restriction of the M\"obius function of $\con^{\mathrm{op}}(\Pb)$ to $[J,I]\times [J,I]$ is equal to the M\"obius function of the subposet $[J,I]$. Lemma \ref{lem:lattice} and Proposition \ref{prop:Hall} now directly imply that $\mu(J,I)=0$. 
\end{proof}

\section{Computing the 0-th level set barcode of a Reeb graph within $\sets$-category}\label{sec:reeb graph}

In this section we propose a novel method to compute the $0$-th level set barcode of a Reeb graph within the category of sets (Section \ref{sec:reeb graph barcode}). This will be possible via a geometric interpretation of $\ZZ$-indexed $\sets$-diagrams (Section \ref{sec:local analysis of reeb graphs}). An improvement of a semicontinuity theorem by Patel will also follow thereby (Section \ref{sec:set-stability}).  %

\subsection{Local analysis of Reeb graphs}\label{sec:local analysis of reeb graphs}

\paragraph{Reeb graphs.} Let $f:M\rightarrow \R$ be a Morse(-type) function (Definition \ref{def:morse type}) on a manifold. The Reeb graph of $f$ is a descriptor for the evolution of connected components of the level sets $f^{-1}(r)$ as $r\in \R$ varies \cite{reeb1946points}. There have been not only numerous applications of Reeb graphs in shape analysis and visualization \cite{BGSF08,shinagawa1991surface} or in dynamic/high-dimensional data analysis \cite{buchin2013trajectory,ge2011data,kim2017stable,kim2018CCCG}, but also a vast body of theoretical study on Reeb graphs \cite{de2016categorified,stefanou2019reeb}, approximation/computation of Reeb graphs \cite{dey2013reeb,harvey2010randomized,Salman12}, metrics on Reeb graphs \cite{bauer2014measuring,bauer2015strong,bauer2018reeb,carriere2017local}, and generalizations \cite{dey2016multiscale,mapper}. 

By a Reeb graph, we will refer to  a \emph{constructible} topological graph $X$ equipped with a notion of ``height" represented by a continuous function $f:X\rightarrow \R$. While we defer the rigorous definition of Reeb graphs to Appendix (Definitions \ref{def:morse type} and \ref{def:Reeb graph}), it is well-known that any diagram $M:\ZZ\rightarrow \sets$ amounts to a Reeb graph and vice versa as we will see below in more detail \cite{curry2016classification,de2016categorified}.

\begin{definition}[Reeb-graph-realization of a zigzag diagram in $\sets$]\label{def:realization} Let $M:\ZZ\rightarrow \sets$. We define the \emph{Reeb graph corresponding to $M$} as the pair $(\reeb(M),\pi)$ consisting of a topological graph $\reeb(M)$ and a map $\pi:\reeb(M)\rightarrow \R$, described subsequently (referring to Example \ref{ex:Reeb}  may help the understanding of the description below).

\begin{enumerate}
    \item For   $(i,i)\in \ZZ$, let each element in $M_{(i,i)}$ become a vertex which lies over $i\in \Z (\subset \R)$. 
    \item For $(i,i-1) \in \ZZ$, let each element in $M_{(i,i-1)}$ become an edge which lies over the interval $[i-1,i]\subset \R$.
    \item For each comparable pair $(i,i-1)\leq (i,i)$ (resp.  $(i,i-1)\leq (i-1,i-1)$) in $\ZZ$, the attaching map between the vertex set and the edge set is specified by $\varphi_M((i,i-1), (i,i))$ (resp. $\varphi_M((i,i-1), (i-1,i-1))$).
 \end{enumerate}
 The space $\reeb(M)$ is the quotient of the disjoint union of the spaces $M_{(i,i)}\times \{i\}$  and $ M_{(i,i-1)}\times [i-1, i]$ for all $i\in \Z$ with respect to the identifications \[(\varphi_M((i,i-1),(i,i))(e),i)\sim (e,i) \ \mbox{ and } \  (\varphi_M((i,i-1),(i-1,i-1))(e),i-1)\sim (e,i-1).\] The map $\pi:\reeb(M)\rightarrow \R$ is defined as the projection onto the second factor.
 
For any interval $I$ of $\ZZ$, one can define the Reeb graph corresponding to a diagram $I\rightarrow \sets$ in the same way.
\end{definition}

\begin{example}\label{ex:Reeb}%
Consider $M:\ZZ\rightarrow \sets$ specified as follows:
\begin{equation*}
    M_{(1,1)}=\{v_1,v_2\},\hspace{3mm}M_{(2,1)}=\{e_1,e_2\},\hspace{3mm} M_{(2,2)}=\{v_3,v_4\},\hspace{3mm}M_{(3,2)}=\{e_3,e_4\},\hspace{3mm} M_{(3,3)}=\{v_5,v_6\},
\end{equation*}
and other $M_{(i,j)}$ are the empty set. The four maps \[\begin{tikzcd}M_{(1,1)}\arrow[<-]{rr}{\varphi_M ((2,1),(1,1))}&& M_{(2,1)}\arrow[->]{rr}{\varphi_M ((2,1),(2,2))}&&M_{(2,2)}\arrow[<-]{rr}{\varphi_M ((3,2),(2,2))}&& M_{(3,2)}\arrow[->]{rr}{\varphi_M ((3,2),(3,3))}&&M_{(3,3)}\end{tikzcd}\]
are defined as follows: 
\[\begin{tikzcd}v_1&&\arrow[mapsto]{ll}{} e_1\arrow[->,mapsto]{rr}{}&&v_3&&\arrow[mapsto]{ll}{} e_3\arrow[mapsto]{rr}{}&&v_5 \\
v_2&&\arrow[mapsto]{ll}{}e_2\arrow[->,mapsto]{rr}{}&&v_4&&\arrow[mapsto]{llu}{} e_4\arrow[mapsto]{rr}{}&&v_6.
\end{tikzcd}\]
The Reeb graph corresponding to $M$ is depicted in Figure \ref{fig:Reeb}.
\end{example}

\begin{figure}
    \centering
    \includegraphics[width=0.4\textwidth]{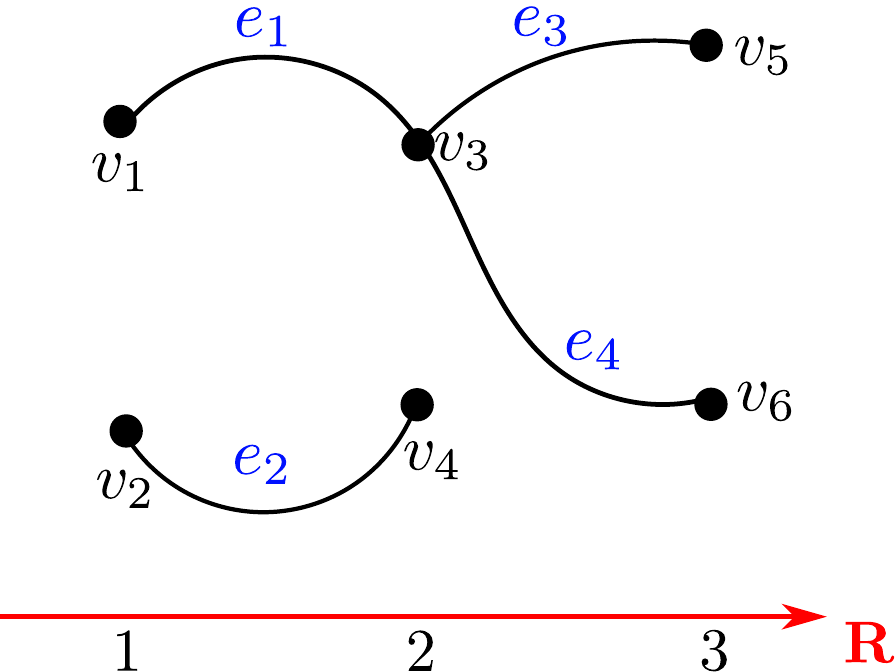}
    \caption{The Reeb graph $(\reeb(M),\pi)$ corresponding to $M$ in Example \ref{ex:Reeb}, where $\pi:\reeb(M)\rightarrow \R$ is the projection map to the horizontal real axis.}
    \label{fig:Reeb}
\end{figure}

Definition \ref{def:realization} describes how to turn a functor $M:\ZZ\rightarrow \sets$ into a Reeb graph. In this respect, we will refer to any $M:\ZZ\rightarrow \sets$ as a \emph{Reeb graph}.  On the other hand, any Reeb graph can be expressed as a $\sets$-valued  zigzag diagram over $\R$ \cite{curry2016classification,de2016categorified}. This zigzag diagram over $\R$ is in turn equivalent to a $\ZZ$-index $\sets$-diagram up to rescaling--- this idea  has already implicitly appeared in \cite{botnan2018algebraic,curry2013sheaves}. This means that the entire combinatorial information of a Reeb graph $X$ can be encoded into a certain $M:\ZZ\rightarrow \sets$. In particular, the $0$-th level set barcode of $X$ \cite{carlsson2009zigzag} can be extracted from $M$ up to rescaling of intervals.

\paragraph{Local analysis of Reeb graphs.} We illustrate how to extract topological/combinatorial information from limits and colimits of interval restrictions of a Reeb graph $M:\ZZ\rightarrow \sets$.  For $k,l\in \ZZ$, assume that $x\in M_{k}$ and $y\in M_{l}$. We write $x\sim y$ if $k$ and $l$ are comparable and one of $x$ and $y$ is mapped to the other via the internal map between $M_k$ and $M_l$.

 We can explicitly represent the limit and colimit of $M$ as follows. 
\begin{enumerate}[label=(\roman*)]
    \item The limit of $M$ is the pair $\left(L,(\pi_k)_{k\in\ZZ}\right)$, where
    \[L:=\left\{(x_k)_{k\in \ZZ}\in \prod_{k\in\ZZ} M_k: \ \mbox{for comparable $k,l\in\ZZ$,}\  x_{k}\sim x_{l} \right\},\]and each $\pi_k:L\rightarrow M_k$ is the canonical projection. \label{item:set-limit construction}
    \item The colimit of $M$ is the pair $\left(C,(i_k)_{k\in\ZZ}\right)$ described as follows:
    \begin{equation}\label{eq:set colimit construction}
        C:=\left(\coprod_{k\in\ZZ}M_k\right)\big/\approx,
    \end{equation}
    where $\approx$ is the equivalence relation generated by the relations $x_k\sim x_{l}$ for $x_k\in M_k$ and $x_{l}\in M_{l}$ with $k,l$ being comparable. For the quotient map $q:\coprod_{k\in\ZZ}M_k\rightarrow C$, each $i_k$ is the composition $M_k\hookrightarrow \coprod_{k\in\ZZ}M_k \stackrel{q}{\rightarrow} C$. \label{item:set-colimit construction}
\end{enumerate}

Let $I\in \inter(\ZZ)$. For any diagram $N:I\rightarrow \sets$, we can construct the limit and colimit of $N$ in the same way; namely, in items \ref{item:set-limit construction} and \ref{item:set-colimit construction} above, replace $M$ and $\ZZ$ by $N$ and $I$, respectively. \textbf{In what follows, we use those explicit constructions  whenever considering limits and colimits of (interval restrictions of) $\ZZ$-indexed $\sets$-diagrams.} 

\begin{definition}[Supports and full components]\label{def:full component} Let $I\in \inter(\ZZ)$ and let $N:I\rightarrow \sets$. Let $c\in \varinjlim N$. We define the \emph{support} of $c$ as
\[\supp(c):=\{k\in I:  \exists x_k\in N_k, \ i_k(x_k)=c\}.\]
In particular, if $\supp(c)=I$, we call $c$ a \emph{full component} of $N$. 
\end{definition}

Now, extending ideas in \cite{de2016categorified}, we geometrically interpret elements of limits, colimits and images of the canonical LC maps of (interval restrictions of) $\ZZ$-indexed $\sets$-diagrams. 

Recall that, for $b,d\in \Z\ \mbox{with $b< d$}$, $\langle b,d \rangle_{\ZZ}$ stands for an interval of $\ZZ$ (Notation \ref{nota:intervals of zz}). Similarly, for $b,d\in\R$ with $b< d$, $\langle b,d \rangle$ will stand for one of the real intervals $(b,d),[b,d],(b,d],[b,d)$. The terminology introduced in Definition \ref{def:full components} below will be illustrated in Example \ref{ex:Reeb2}.

\begin{definition}[Local analysis of a Reeb graph]\label{def:full components}
Consider any $M:\ZZ\rightarrow \sets$ and fix $I:=\langle b,d \rangle_{\ZZ} \in \inter(\ZZ)$. %
\begin{enumerate}[label=(\roman*)]
    \item Each element in  $\varinjlim M|_{I}$ is said to be a $\langle b,d \rangle$-component.
    \item Each element in  $\varprojlim M|_{I}$ is said to be a $\langle b,d \rangle$-section.\footnote{A right-inverse of a morphism is called a section \cite{mac2013categories}. See Example \ref{ex:Reeb2} \ref{item:Reeb2 2}.}\label{item:full components2}
    \item If $c\in \varinjlim M|_{I}$ and $\supp(c)=I$, then $c$ is said to be a $\langle b,d \rangle$-\emph{full}-component.
    \item Each element in the image of the canonical LC map $\psi_{M|_{I}}:\varprojlim M|_{I}\rightarrow \varinjlim M|_{I}$ is said to be a $\langle b,d \rangle$-full-component of $M$ \emph{with a section}.  \label{item:full components4} 
\end{enumerate}

\end{definition}

\begin{example}[Geometric interpretation of Definition \ref{def:full components}]\label{ex:Reeb2} 
For $M:\ZZ\rightarrow \sets$ in Example \ref{ex:Reeb} (Figure \ref{fig:Reeb}), note the following: 
\begin{enumerate}[label=(\roman*)]
    \item There are two $[1,3]$-components: The connected component containing the vertex $v_1$ and the one containing $v_2$. Also, there are two $[1,2]$-components: these components correspond to the restrictions of the previous two components to $\pi^{-1}([1,2])$. \label{item:I-component}
    \item The 5-tuples $(v_1,e_1,v_3,e_3,v_5)$ and $(v_1,e_1,v_3,e_4,v_6)$ are $[1,3]$-sections. These tuples deserve to be called by such a name since there indeed exist continuous maps $s_1:[1,3]\rightarrow \reeb(M)$ and $s_2:[1,3]\rightarrow \reeb(M)$ satisfying (1) $\pi\circ s_1=\pi\circ s_2=\mathrm{id}_{[1,3]}$, and (2) $\im(s_1),\im(s_2)$ lie on $v_1e_1v_3e_3v_5$ and $v_1e_1v_3e_4v_6$ respectively in $\reeb(M)$.\label{item:Reeb2 2}
    \item There exists only one $[1,3]$-full-component; among the two $[1,3]$-components considered in item \ref{item:I-component}, the one containing $v_1$ is the unique $[1,3]$-full-component.\label{item:Reeb2 3}
    \item Via the canonical LC map, each $[1,3]$-section is mapped to the unique $[1,3]$-full-component, which contains the image of the $[1,3]$-section.  This demonstrates why the connected component containing $v_1$ is called a $[1,3]$-full-component of $M$ \emph{with a section (cf. Example \ref{ex:non-isomorphic}).} \label{item:Reeb2 4}
    \end{enumerate}
\end{example}

\subsection{Computing the 0-th level set barcode of a Reeb graph without homology}\label{sec:reeb graph barcode}

In this section we propose a novel method to compute the $0$-th level set barcode of a Reeb graph within the category of sets.

\paragraph{The $0$-th level set barcode of a Reeb graph.} The $0$-th level set barcode of a Reeb graph (Definition \ref{def:0-th levelset barcode}) captures the lifetime of all homological features in the Reeb graph \cite{bauer2014measuring}. 
Whereas measuring the interleaving distance (or equivalently functional distortion distance) between Reeb graphs is not easy \cite[Section 5]{de2016categorified}, \cite{bauer2015strong}, the computation of the bottleneck distance between the $0$-th level set barcodes of Reeb graphs can be efficiently carried out \cite{kerber2017geometry}. Moreover, the bottleneck distance between the $0$-th level set barcodes of Reeb graphs is a tight lower bound for the interleaving distance between Reeb graphs up to multiplicative constant 2 \cite{bauer2014measuring,botnan2018algebraic}.

\begin{definition}[Linearization functor]\label{def:free} Let $\free:\Sets\rightarrow \Vect$ be the \emph{linearization functor} (a.k.a. free functor):  For any set $S$, $\free(S)$ consists of formal linear combination $\sum_{i}a_is_i$, ($a_i\in \F, s_i\in S$) of finite terms of elements in
$S$ over the field $\F$. Also, given a set map $f:S\rightarrow T$ , $\free(f)$ is the linear map from $\free(S)$ to $\free(T)$ obtained by linearly extending $f$. 
\end{definition}

Throughout this section, we identify both of the Grothendieck groups $\A(\vect)$ and $\A(\sets)$ with the integer group $(\Z,+)$ (Remark \ref{rem:counterpart}). Let us recall the $0$-th level set barcode of a Reeb graph \cite{carlsson2009zigzag,botnan2018algebraic}:
\begin{definition}[The 0-th level set barcode of a Reeb graph]\label{def:0-th levelset barcode} Given $M:\ZZ\rightarrow \sets$, consider $\free\circ M:\ZZ\rightarrow \vect$. The $0$-th level set barcode of $M$ refers to $\dgmzz(\free\circ M)$.
\end{definition}

\paragraph{New method for computing the 0-th level set barcode of a Reeb graph.} In order to establish a method to compute the 0-th level set barcode of a Reeb graph \emph{without matrix operations} \cite{carlsson2009zigzag,milosavljevic2011zigzag}, we introduce a new invariant for Reeb graphs. 

Given $M:\ZZ\rightarrow \sets$ and $I\in \inter(\ZZ)$, we denote the number of \emph{full components} of $M|_I$ by $\full(M|_I)$ (Definition \ref{def:full component}). 

\begin{definition}[Full function of a Reeb graph] \label{def:full function}
Given $M:\ZZ\rightarrow \sets$, the \emph{full function} $\Full(M):\inter(\ZZ)\rightarrow \Z_{\geq 0}$ is defined as $I\mapsto \full\left(M|_{I}\right).$
\end{definition}

One can observe that $\Full(M)(\langle b,d \rangle_{\ZZ})$ is equal to the number of connected components of $\pi^{-1}\langle b,d \rangle (\subset \reeb(M))$ \emph{whose images via $\pi$ are $\langle b,d \rangle$}. For instance, consider  $M:\ZZ\rightarrow \sets$ whose corresponding Reeb graph is depicted as in Figure \ref{fig:Curry} (A). $\Full(M)([2,3)_{\ZZ})$ is $2$, since both of the two connected components of $\pi^{-1}[2,3)$ cover $[2,3)$ via $\pi$. On the other hand, $\Full(M)((1,3)_{\ZZ})$ is $1$ since only one of the connected components of $\pi^{-1}(1,3)$ covers the entire $(1,3)$ via $\pi$.

The following proposition is the core observation for establishing a new method to compute the $0$-th level set barcodes of Reeb graphs.

\begin{proposition}\label{thm:full is rank}For any $M:\ZZ\rightarrow \sets$,  \[\Full (M)=\rk_\vect(\free\circ M).\]
\end{proposition}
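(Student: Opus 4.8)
The plan is to prove the identity $\Full(M)(I) = \rk_{\vect}(\free \circ M)(I)$ for each $I \in \inter(\ZZ)$ by comparing both sides against an explicit description of the canonical LC map, using the concrete formulas for limits and colimits in $\sets$ and $\vect$. First I would fix an interval $I = \langle b, d \rangle_\ZZ$ and write $N := M|_I$. By Definition \ref{def:rank invariant} and Definition \ref{def:fully general rank}, $\rk_{\vect}(\free \circ M)(I)$ is the dimension of the image of the canonical LC map $\psi_{\free \circ N} : \varprojlim(\free \circ N) \to \varinjlim(\free \circ N)$. The key structural input is that the linearization functor $\free$ is a left adjoint (it is the free-vector-space functor), so it preserves colimits; in particular $\varinjlim(\free \circ N) \cong \free(\varinjlim N)$, and under this identification the colimit cocone maps $i_k^{\free\circ N}$ are $\free$ applied to the set-level cocone maps $i_k^N$. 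Using the explicit construction of the set colimit in item \ref{item:set-colimit construction}, $\varinjlim N$ is the set of $\approx$-equivalence classes, so $\free(\varinjlim N)$ has as a basis exactly the connected components of $N$ (the elements of $\varinjlim N$).

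Next I would analyze the image of $\psi_{\free \circ N}$. Since $\psi_{\free\circ N} = i_k^{\free\circ N} \circ \pi_k^{\free\circ N}$ for any $k \in I$, its image is spanned by the vectors $i_k^{\free\circ N}(\pi_k(\ell))$ as $\ell$ ranges over a generating set of $\varprojlim(\free\circ N)$. The cleanest route is to identify which basis vectors of $\free(\varinjlim N)$ — i.e. which components $c \in \varinjlim N$ — lie in the image of $\psi_{\free \circ N}$. I claim $c \in \im(\psi_{\free\circ N})$ if and only if $c$ is a full component of $N$, i.e. $\supp(c) = I$. For the ``if'' direction: if $c$ is full, then for every $k \in I$ there is $x_k \in N_k$ with $i_k^N(x_k) = c$; one shows the tuple $(x_k)_{k \in I}$ — or rather a coherent choice thereof — can be taken to lie in $\varprojlim N$ (this uses the interval structure of $\ZZ$ and is essentially the content of Example \ref{ex:Reeb2}\ref{item:Reeb2 4}, where each full component admits a section), and then $\psi_{\free\circ N}$ of that section maps to $c$. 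For the ``only if'' direction: any element of $\varprojlim(\free\circ N)$ maps under each $\pi_k$ into $\free(N_k)$, hence under $\psi$ into the span of $\{i_k^N(x) : x \in N_k\}$, which only contains components $c$ with $k \in \supp(c)$; intersecting over all $k \in I$ forces $\supp(c) = I$. A mild subtlety is that $\varprojlim(\free \circ N)$ need not equal $\free(\varprojlim N)$ (limits are not preserved by left adjoints), so the argument must work directly with the universal property / explicit limit of $\free \circ N$ in $\vect$ rather than pushing $\free$ through the limit.

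Granting the claim, the image of $\psi_{\free\circ N}$ is precisely the span of the full components of $N$, a subspace whose dimension is $\full(N) = \Full(M)(I)$, which is exactly what we want. I expect the main obstacle to be the ``if'' direction of the claim: promoting a pointwise-surjectivity statement (each full component is hit by each $i_k^N$) to the existence of an actual element of the vector-space limit $\varprojlim(\free\circ N)$ mapping onto it. One must be careful that a full component $c$ may be hit by several elements of $N_k$ for a given $k$, so there is a choice involved, and one needs the chosen lifts at comparable indices $k \asymp l$ to be related by the internal maps $N(k \asymp l)$ so that the tuple genuinely lies in $\varprojlim N$; here I would use that the Hasse diagram of $I$ is a path (so one can propagate a choice along the zigzag without running into consistency obstructions) together with the fact that within a single connected component the relevant set maps restricted to that component are "surjective along the zigzag." An alternative, possibly cleaner, packaging of the same idea is to invoke Proposition \ref{prop:the meaning of rank ZZ} applied to $\free \circ M|_J$ together with Lemma \ref{lem:bijection0}, reducing the statement to a bijection between full components of $M|_I$ and intervals of $\dgmzz(\free \circ M)$ containing $I$; but I would still owe the reader the set-theoretic identification of full components with a basis of the image, so the direct argument above seems most transparent.
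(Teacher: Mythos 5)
There is a genuine gap, and it sits exactly where you predicted it would. Your reduction is sound up to a point: since $\free$ preserves colimits, $\varinjlim(\free\circ N)\cong\free(\varinjlim N)$ has the components of $N$ as a basis, the diagram $\free\circ N$ splits as a direct sum over components, and your ``only if'' argument correctly shows $\im(\psi_{\free\circ N})\subseteq\mathrm{span}\{c:\supp(c)=I\}$, giving $\rk_{\vect}(\free\circ M)(I)\leq\Full(M)(I)$. The problem is the reverse inequality. Your justification that each full component $c$ is hit by $\psi_{\free\circ N}$ rests on producing an element of the \emph{set-level} limit $\varprojlim N$ supported on $c$ --- i.e.\ a section --- by ``propagating a choice along the zigzag without running into consistency obstructions,'' citing Example \ref{ex:Reeb2}\ref{item:Reeb2 4}. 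But full components need not admit sections: this is precisely the phenomenon that Definition \ref{def:untwisted} and Example \ref{ex:non-isomorphic} (Figure \ref{fig:Curry}) are built around. For the Reeb graph there and any $I\supsetneq[2,3]_{\ZZ}$, the unique component is full, has no section (so your propagation scheme dead-ends), and yet the proposition asserts --- correctly --- that the linearized LC map has rank $1$. So the hard direction cannot be reduced to a set-level lifting argument, and Example \ref{ex:Reeb2}\ref{item:Reeb2 4} is a feature of that particular untwisted example, not a general fact.

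What is actually needed is that for a connected $N^c$ with $\supp(N^c)=I$, the map $\varprojlim(\free\circ N^c)\to\varinjlim(\free\circ N^c)\cong\F$ is nonzero, even when $\varprojlim N^c=\emptyset$. Equivalently, by Lemma \ref{lem:meaning of rank} (or Proposition \ref{prop:the meaning of rank ZZ}), the barcode of $\free\circ N^c$ contains the full interval $I$ with multiplicity exactly one. This is the nontrivial topological input, and the paper supplies it by routing through the EP Equivalence Theorem of \cite{carlsson2009zigzag}: full bars $\langle b,d\rangle_{\ZZ}$ in $\dgmzz(\free\circ M|_I)$ correspond to extended pairs $[b,\bar d)$ in the $0$-th extended persistence diagram of $(\reeb(M|_I),\pi)$, which by \cite{CEH09} correspond to connected components of $\reeb(M|_I)$ whose image under $\pi$ is all of $[b,d]$ --- i.e.\ to full components. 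Your alternative packaging (via Proposition \ref{prop:the meaning of rank ZZ} and Lemma \ref{lem:bijection0}) is the right direction, but the step you say you ``still owe the reader'' is exactly this extended-persistence identification; without it, or some substitute homological argument, the lower bound is unproved.
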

We prove Proposition \ref{thm:full is rank} at the end of this section.

Let $I\in \inter(\ZZ)$ be finite. Then $\nbd(I)$ consists of two elements, say $\nbd(I):=\{a,b\}$. Hence, we can write the first and second entourage of $I$ as $I^1=\{I\cup\{a\},I\cup\{b\}\}$ and $I^2=\{I\cup\{a,b\}\}$, respectively. By Theorems \ref{thm:interval decomposition0}, \ref{prop:completeness}, and Proposition \ref{thm:full is rank}, we directly have:   

\begin{corollary}\label{cor:new formula for 0th}For any $M:\ZZ\rightarrow \sets$ and any finite $I\in \inter(\ZZ)$, the multiplicity of $I$ in $\dgmzz(\free\circ M)$ is
\[\Full(M)(I)-\Full(M)(I\cup\{a\})-\Full(M)(I\cup\{b\})+\Full(M)(I\cup\{a,b\})\]
which is also equal to $\dgm_{\vect}^\ZZ(\free\circ M)(I)$. 
\end{corollary}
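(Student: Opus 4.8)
The plan is to deduce the statement directly by chaining together the results already established in the excerpt, so the argument is essentially bookkeeping rather than new mathematics. The key observation is that $\free\circ M:\ZZ\rightarrow\vect$ is a zigzag module, so by Theorem \ref{thm:interval decomposition0} it is interval decomposable and admits a barcode $\dgmzz(\free\circ M)$. First I would invoke Proposition \ref{prop:Mobius}, which says that for any $I\in\inter(\ZZ)$ the multiplicity of $I$ in $\dgmzz(\free\circ M)$ equals
\[
\rk(\free\circ M)(I)-\rk(\free\circ M)(I^-)-\rk(\free\circ M)(I^+)+\rk(\free\circ M)(I^{\pm}),
\]
with the stated boundary conventions for the cases $b=-\infty$ or $d=+\infty$.

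Next I would apply Proposition \ref{thm:full is rank}, which identifies $\rk_\vect(\free\circ M)$ with $\Full(M)$ as functions on $\inter(\ZZ)$ (after identifying $\A(\vect)\cong\Z$ via $\dim$). Substituting $\Full(M)(J)$ for $\rk(\free\circ M)(J)$ at each of the four arguments $J\in\{I,I^-,I^+,I^{\pm}\}$ turns the displayed alternating sum into
\[
\full(M)(I)-\full(M)(I^-)-\full(M)(I^+)+\full(M)(I^{\pm}),
\]
which is the first claimed expression. Here I should be slightly careful that the entourages $I^-,I^+,I^{\pm}$ used in Proposition \ref{prop:Mobius} are precisely the elements of the first and second entourages $I^1,I^2$ from Notation \ref{nota:I^n} when specialized to the poset $\ZZ$ (where every interval has perimeter $0$, $1$, or $2$), so the same conventions carry over verbatim.

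Finally, for the last assertion that this quantity equals $\dgm_\vect^\ZZ(\free\circ M)(I)$, I would note that $\dgm^{\Pb}$ in Definition \ref{def:generalized PD2} specialized to $\Pb=\ZZ$ is exactly formula (\ref{eq:zz mobius}) of Proposition \ref{prop:Mobius} — indeed $\con(\ZZ)=\inter(\ZZ)$ and the Möbius/inclusion–exclusion sum over $I^1\sqcup I^2$ reproduces $-\rk(I^-)-\rk(I^+)+\rk(I^{\pm})$ — and that Proposition \ref{prop:completeness} guarantees that for the interval decomposable module $\free\circ M$ this value coincides with the barcode multiplicity on intervals. I do not anticipate a genuine obstacle here; the only thing requiring care is making the identification of the various entourage notations precise so that the alternating signs line up correctly, and confirming that the boundary conventions of Proposition \ref{prop:Mobius} match the convention $I^n=\emptyset$ for $n>o_I$ used in Definition \ref{def:generalized PD2}. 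Since the corollary is explicitly flagged in the excerpt as following "directly" from Theorem \ref{thm:interval decomposition0} and Propositions \ref{prop:Mobius}, \ref{prop:completeness}, and \ref{thm:full is rank}, the write-up can be kept to a few lines.
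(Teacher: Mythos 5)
Your proposal is correct and follows exactly the route the paper takes: the paper derives this corollary by directly chaining Theorem \ref{thm:interval decomposition0} with Propositions \ref{prop:Mobius}, \ref{prop:completeness}, and \ref{thm:full is rank}, which is precisely your bookkeeping argument. Your extra care in matching the entourage notation $I^-,I^+,I^{\pm}$ with $I^1,I^2$ for the poset $\ZZ$ is a reasonable detail to check but raises no genuine issue.
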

Note that, when $I \in \inter(\ZZ)$ is infinite, the perimeter of $I$ is either 0 or 1 and thus the formula for the multiplicity of $I$ in $\barc(\free\circ M)$ is even simpler than the one given above. 

\begin{example}\label{ex:non-isomorphism thm} Consider  $M:\ZZ\rightarrow \sets$ whose corresponding Reeb graph is depicted as in Figure \ref{fig:Curry} (A). By Corollary \ref{cor:new formula for 0th}, we have that
\begin{align*}
    &\dgm_{\vect}^\ZZ(\free\circ M)([2,3)_{\ZZ})\\
    &=\Full(M)([2,3)_{\ZZ})-\Full(M)((1,3)_{\ZZ})-\Full(M)([2,3]_{\ZZ})+\Full(M)((1,3]_{\ZZ})\\
    &=2-1-1+1\\
    &=1,
\end{align*}
which is the multiplicity of $[2,3)_{\ZZ}$ in $\barc^\ZZ(\free\circ M)$.
By applying the same strategy to the other intervals of $\ZZ$, we have: \begin{equation}\label{eq:dgm of a reeb graph}
    \dgm_{\vect}^\ZZ(\free\circ M)(I)=\begin{cases}
1,&I\in \{[2,3)_{\ZZ},(2,3]_{\ZZ},[1,4]_{\ZZ}\},
\\0,&\mbox{otherwise,}\end{cases}
\end{equation}
and the barcode of $\free\circ M$ consists of the three intervals $[2,3)_{\ZZ},(2,3]_{\ZZ}$ and $[1,4]_{\ZZ}$.
\end{example}

We remark that the Reeb graph from Example \ref{ex:non-isomorphism thm} was introduced in \cite[Figure 12]{adams2015evasion} along the way to address the problem of searching for evasion paths in mobile sensor networks. The authors pointed out  that, without careful homological considerations, one could make a wrong guess about the $0$-th level set barcode of this Reeb graph (see also \cite[Section 10]{curry2013sheaves}). Example \ref{ex:non-isomorphism thm} demonstrates the usefulness of Corollary \ref{cor:new formula for 0th} in this context.
\begin{figure}
    \centering
    \includegraphics[width=\textwidth]{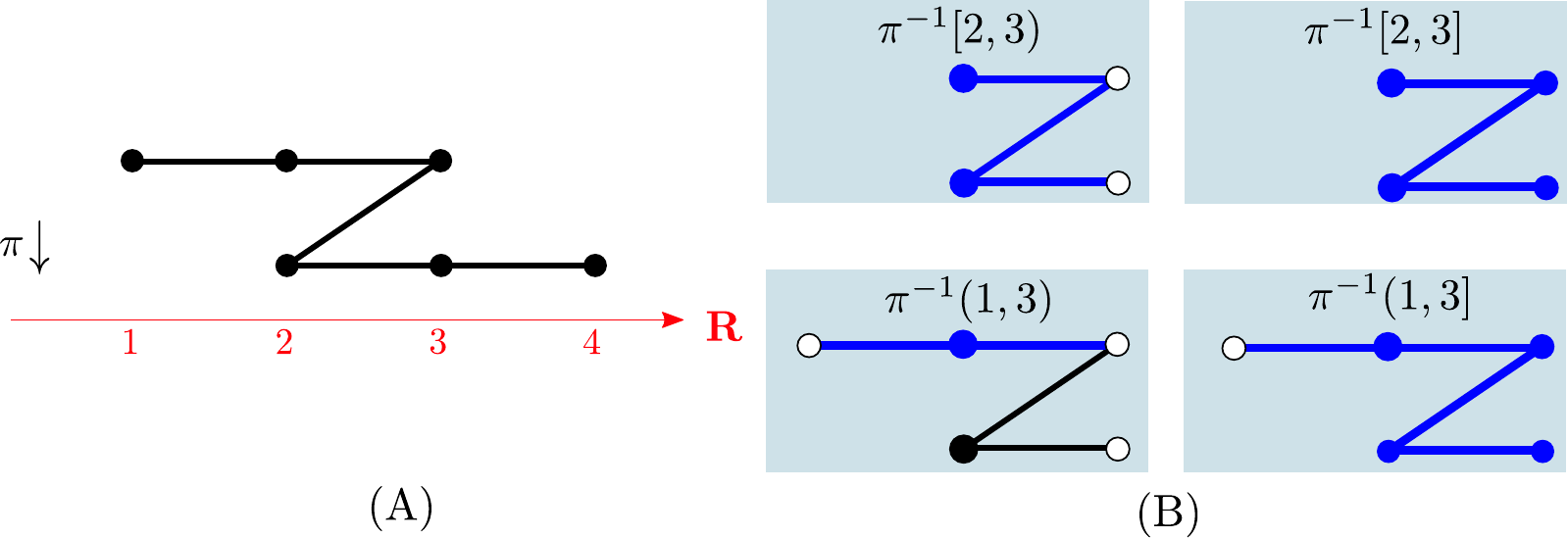}
    \caption{(A) A Reeb graph $(\reeb(M),\pi)$ which corresponds to $M$ in Example \ref{ex:non-isomorphism thm}. (B) Pre-images  $\pi^{-1}(I)$ for 4 different choices of intervals $I\subset \R$. Blue components are the full components over the corresponding intervals.}
    \label{fig:Curry}
\end{figure}

Given $M:\ZZ\rightarrow \sets$, we also have another persistence diagram $\dgm_{\sets}^\ZZ(M)$, which has nothing to do with the linearization functor $\free$. Next we show that $\dgm_{\sets}^\ZZ(M)$ is not necessarily equal to $\dgm^\ZZ_\vect(\free\circ M)$:  

\begin{example}\label{ex:non-isomorphic} Consider  $M:\ZZ\rightarrow \sets$ whose corresponding Reeb graph is depicted as in Figure \ref{fig:Curry} (A). By Definition \ref{def:full components} \ref{item:full components4}, $\rk(M)(\langle b,d\rangle_{\ZZ})$ counts the number of $\langle b,d\rangle$-full components \emph{with a section.} We claim that
\begin{equation}\label{eq:claim}
    \mbox{for $I\in \inter(\ZZ)$,} \hspace{8mm} \rk(M)(I)=\begin{cases} 0,&I \supsetneq [2,3]_{\ZZ}\\
\full(M|_I),&\mbox{otherwise.}
\end{cases}
\end{equation}

Assume that $I:=\langle b,d\rangle_{\ZZ}\in \inter(\ZZ)$ strictly contains $[2,3]_{\ZZ}$. Then, $\pi^{-1}\langle b,d\rangle$ has one connected component and it does not has a section, i.e. there is no map $s:\langle b,d \rangle \rightarrow \reeb(M)$ such that $\pi\circ s = \id_{\langle b,d \rangle}$. On the other hand, if $I$ does not strictly contain $[2,3]_{\ZZ}$, then every $\langle b,d\rangle$-full-connected component has a section. These observations prove the claim in (\ref{eq:claim}). Now, by Definition \ref{def:generalized PD2}, it is not hard to obtain:
\[\dgm_{\sets}^{\ZZ}(M)(I)=\begin{cases} 1,&I=[2,3)_{\ZZ}\ \mbox{or}\ I=(2,3]_{\ZZ}\\ -1,&I=[2,3]_{\ZZ}\\0,&\mbox{otherwise}.
\end{cases}
\]
Now, observe that $\dgm_{\sets}^\ZZ(M)$ is different from $\dgm_{\vect}^\ZZ(\free\circ M)$ in (\ref{eq:dgm of a reeb graph}).
\end{example}

\begin{Remark}
\begin{enumerate}[label=(\roman*)]
    \item The persistence diagram of a $\ZZ$-indexed $\sets$-diagram is not a complete invariant. For example, assume that $N:\ZZ\rightarrow \sets$ is defined as 
    \[N|_{[2,3]_{\ZZ}}=M|_{[2,3]_{\ZZ}}, \hspace{10mm} N_{s}=\emptyset\ \mbox{for $s\notin [2,3]_{\ZZ}$}.\]
Even though $N$ is not isomorphic to $M$ in Example \ref{ex:non-isomorphic}, it is not difficult to check that $\rk(M)=\rk(N)$ and thus $\dgm_{\sets}^{\ZZ}(M)= \dgm_{\sets}^{\ZZ}(N)$.     
    \item For every $M:\ZZ\rightarrow \sets$, $\rk(M)(I)\leq \rk(\free\circ M)(I)$ for all $I=\langle b,d\rangle_{\ZZ}$. This directly follows from the fact that $\rk(M)(I)$ counts $\langle b,d\rangle$-full-components \emph{with a section}, whereas $\rk(\free\circ M)(I)$ counts $\langle b,d\rangle$-full-components (Proposition \ref{thm:full is rank}).
\end{enumerate}
\end{Remark}

In Section \ref{sec:set-stability} we will discuss the stability of $\dgm_{\sets}^\ZZ(M)$ under perturbations of $M:\ZZ\rightarrow \sets$.

\paragraph{Extended persistence and a proof of Proposition \ref{thm:full is rank}.} For proving Proposition \ref{thm:full is rank}, we briefly review the notion of \emph{extended persistence}. See \cite{CEH09} for details.

Let $X$ be a topological space and let $f:X\rightarrow \R$ be a Morse-type function (Definition \ref{def:morse type}) where $S:=\{s_1,s_2,\ldots,s_n\}$ is the collection of critical points of $f$. Let us select a set of indices $t_i$ which satisfy
\[-\infty<t_0<s_1<t_1<s_2<\ldots<t_{n-1}<s_n<t_n<\infty.
\]

For $1\leq i\leq j\leq n$, let $X_i^j:=f^{-1}([t_i,t_j])$. For $k\in\Z_+$, consider the following diagram of absolute and relative homology groups:
\[\begin{tikzcd}
\Hrm_k(X_0^0)\arrow{r}{}&\Hrm_k(X_0^1)\arrow{r}{}&\cdots\arrow{r}{}&\Hrm_k(X_0^{n-1})\arrow{r}{}&\Hrm_k(X_0^n)\arrow{d}{\rotatebox{90}{}}
\\
\Hrm_k(X_0^n,X_0^n)&\arrow{l}{}\Hrm_k(X_0^n,X_1^n)&\arrow{l}{}\cdots&\arrow{l}{}\Hrm_k(X_0^n,X_{n-1}^n)&\arrow{l}{}\Hrm_k(X_0^n,X_n^n),  
\end{tikzcd}\]
where each arrow stands for the map induced by the inclusion.
The collection of pairs arising from this diagram are recorded in the so-called \emph{$k$-th extended persistence diagram}. There are three different types of pairs: ordinary pairs arise between the elements in the top row of the sequence, relative pairs between the elements in the bottom row, and extended pairs span both rows. In particular, in the $k$-th extended persistence diagram,
\begin{enumerate}[label=(\roman*)]
    \item an ordinary pair of $X_0^i$ and $X_0^j$ ($i<j<n$) is recorded as $[s_i,s_{j+1})$,
    \item a relative pair of $(X_{0}^n,X_{i}^n)$ and $(X_{0}^n,X_{j}^n)$ ($i<j<n$) is recorded as $[\bar{s}_{j+1},\bar{s}_{i})$,
    \item an extended pair of $X_0^i$ and $(X_{0}^n,X_{j}^n)$ is recorded as  $[s_i,\bar{s}_j)$.
\end{enumerate}

\begin{proof}[Proof of Proposition \ref{thm:full is rank}] Fix $I:=\langle b,d \rangle_{\ZZ} \in\inter(\ZZ)$. We will prove that 
\begin{equation}\label{eq}
    \full(M)(I)=\rk_{\vect}(\free\circ M) (I).
\end{equation}
By Proposition \ref{rem:implication}, $\rk_{\vect}(\free\circ M)(I)$ is equal to the multiplicity of $I$ in $\barc^I(\free\circ M|_I)$. Let $(\reeb(M|_I),\pi)$ be the Reeb graph corresponding to $M|_I$. By the bijection given in \cite[EP Equivalence Theorem (Table 1)]{carlsson2009zigzag}, the copies of $I=\langle b,d \rangle_{\ZZ}$ in $\dgmzz(\free\circ M|_I)$ one-to-one correspond to the copies of $[b,\bar{d})$ in the $0$-th extended persistence diagram of $(\reeb(M|_I),\pi)$.  Also, the copies of $[b,\bar{d})$ in the $0$-th extended persistence diagram of $(\reeb(M|_I),\pi)$ one-to-one correspond to the connected components of $\reeb(M|_I)$ whose image via $\pi$ is $[b,d]\subset \R$ \cite[p.83]{CEH09}. These connected components are exactly the full components of $M|_I$ in the sense of Definition \ref{def:full component}. Therefore, we have the equality in (\ref{eq}). 
\end{proof}

\subsection{The stability of $\sets$-persistence diagrams for merge trees and untwisted Reeb graphs}\label{sec:set-stability}

Patel's semicontinuity theorem \cite{patel2018generalized} states that the so-called \emph{type $\A$ persistence diagram} of $F:\R\rightarrow \C$ is stable to all \emph{sufficiently small} perturbations of $F$. In this section we promote this theorem to a complete continuity theorem when $\C=\sets$. Also, in turn we establish a stability result for $\sets$-persistence diagrams of a certain class of Reeb graphs.

Again, throughout this section, we identify both of the Grothendieck groups $\A(\vect)$ and $\A(\sets)$ with the integer group $(\Z,+)$ (Remark \ref{rem:counterpart}).

\paragraph{Bottleneck stability for merge trees.} One can encode all the combinatorial information of a \emph{merge tree} \cite{morozov2013interleaving} into a $\Z$-indexed $\sets$-diagram:

\begin{definition}A functor $F:\Z\rightarrow \sets$ is said to be a \emph{(discrete) merge tree}.\footnote{A \emph{constructible} persistence
module $F:\R\rightarrow \sets$ (Definition \ref{def:constructible} in Appendix) is often said to be a \emph{merge tree} \cite{morozov2013interleaving}. In order to compute the Patel's persistence diagram of $F$, it suffices to consider a certain re-indexed diagram $D(F):\Z\rightarrow \sets$ of $F$. Such a re-indexing method is described in Appendix \ref{sec:patel} (the paragraph \emph{Re-indexing a constructible persistence module by $\Z$}).}
\end{definition}

Recall from Section \ref{sec:reeb graph barcode} that, given an $M:\ZZ\rightarrow \sets$, the two persistence diagrams $\dgm^{\ZZ}_{\sets}(M)$ and $\dgm^{\ZZ}_{\vect}(\free\circ M)$ do not coincide in general. That is not the case for merge trees:

\begin{proposition}\label{prop:isomorphism-merge tree} For any $F:\Z\rightarrow \sets$, we have $\rk(F)=\rk(\free\circ F)$ and therefore, $$\dgm_{\sets}^{\Z}(F)=\dgm_{\vect}^{\Z}(\free\circ F).$$
\end{proposition}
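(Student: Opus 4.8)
\emph{Plan.} By Definition~\ref{def:generalized PD2}, each of $\dgm^{\Z}_{\sets}(F)$ and $\dgm^{\Z}_{\vect}(\free\circ F)$ is a fixed alternating sum of values of the rank invariant over entourages, so it suffices to prove the first assertion, $\rk(F)=\rk(\free\circ F)$ as maps $\con(\Z)\to\Z_+$, where we identify $\I(\sets)\cong\Z_+\cong\I(\vect)$ as in Remark~\ref{rem:counterpart}. Recall $\con(\Z)=\inter(\Z)$, and that every $I\in\inter(\Z)$ is of one of four types: a finite interval $[a,b]$, a right ray $[a,\infty)$, a left ray $(-\infty,b]$, or $\Z$. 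Fix such an $I$ and some $k\in I$; let $i_k$ be the colimit cocone of $F|_I$ at $k$ and $\pi_k$ the limit cone of $F|_I$ at $k$, so $\im\psi_{F|_I}=i_k(\im\pi_k)$. Since $\free\colon\Sets\to\Vect$ is a left adjoint (to the forgetful functor) it preserves colimits, hence $\varinjlim(\free\circ F|_I)\cong\free(\varinjlim F|_I)$ with cocone $\free(i_k)$, and therefore $\im\psi_{\free\circ F|_I}=\free(i_k)\big(\im\pi_k^{\free}\big)$, where $\pi_k^{\free}$ is the limit cone of $\free\circ F|_I$ at $k$. The proposition will follow once we establish
\[ \im\pi_k^{\free} \;=\; \free\big(\im\pi_k\big) \qquad\text{inside } \free(F_k), \]
because then $\im\psi_{\free\circ F|_I}$ is the span of the set $i_k(\im\pi_k)=\im\psi_{F|_I}$, whose dimension equals its cardinality.

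First suppose $I$ has a least element, i.e.\ $I=[a,b]$ or $I=[a,\infty)$, and take $k=a$. The limit of $F|_I$ is then the value at the initial object, $\varprojlim F|_I=F_a$ with $\pi_a=\id_{F_a}$, and this description is preserved by \emph{any} functor, so $\varprojlim(\free\circ F|_I)=\free(F_a)$ with $\pi_a^{\free}=\id$. Hence $\im\pi_a^{\free}=\free(F_a)=\free(\im\pi_a)$ trivially. (In these two cases one in fact has $\psi_{\free\circ F|_I}=\free(\psi_{F|_I})$, and the conclusion is equally immediate from the fact that $\free$ sends monomorphisms to monomorphisms and epimorphisms to epimorphisms, hence preserves image factorizations.)

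The remaining cases, $I=(-\infty,b]$ and $I=\Z$, are the heart of the argument, since here $F|_I$ has no least element and $\free$ does \emph{not} commute with the resulting infinite inverse limit. We exploit that $\sets$ consists of \emph{finite} sets. Fix $k\in I$; the images $\im\varphi_F(m,k)$ for $m\le k$ in $I$ form a descending chain of finite sets, hence stabilize to a finite set $S_k:=\bigcap_{m\le k}\im\varphi_F(m,k)$. On the $\Sets$ side, $\im\pi_k\subseteq S_k$ always holds, and the reverse inclusion is a K\"onig's lemma / compactness statement: for $y\in S_k$, the compatible families $(x_m)_{m\le k}$ with $x_k=y$ form an inverse limit of nonempty finite sets over the countable cofiltered poset $\{m\in I:m\le k\}^{\mathrm{op}}$, hence a nonempty set; so $\im\pi_k=S_k$. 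On the $\Vect$ side, the inverse system $\big(\free(F_m)\big)_{m\le k}$ has all terms finite-dimensional, so it satisfies the Mittag--Leffler condition, and the standard Mittag--Leffler theorem gives that the image of $\varprojlim_{m\le k}\free(F_m)\to\free(F_k)$ is $\bigcap_{m\le k}\im\free(\varphi_F(m,k))=\bigcap_{m\le k}\free\big(\im\varphi_F(m,k)\big)=\free(S_k)$. Finally, a compatible family $(v_m)_{m\le k}$ extends to a compatible family over all of $I$ by pushing $v_k$ forward, so this image equals $\im\pi_k^{\free}$. Thus $\im\pi_k^{\free}=\free(S_k)=\free(\im\pi_k)$, establishing the displayed identity in all cases, and hence the proposition.

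The only genuine obstacle is this last step: in contrast with colimits, which $\free$ preserves outright, the inverse limits occurring for $I=(-\infty,b]$ and $I=\Z$ are not preserved by $\free$ as objects, so one must compare \emph{images} rather than objects, and the comparison succeeds precisely because every $F_m$ is finite. This is the same phenomenon --- $\free$ failing to commute with fibered-product-type limits --- responsible for the disagreement between $\dgm^{\ZZ}_{\sets}$ and $\dgm^{\ZZ}_{\vect}\circ\free$ for general Reeb graphs (Example~\ref{ex:non-isomorphic}); over the totally ordered poset $\Z$ the relevant limits are either values at objects or inverse limits of finite sets, for which the image comparison still goes through, which is why the two diagrams coincide here.
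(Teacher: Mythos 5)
Your proof is correct, and it takes a genuinely more detailed route than the paper's. The paper disposes of the first assertion in one line: for $I=[a,b]\in\inter(\Z)$ the generalized rank of $F|_{[a,b]}$ reduces to the rank of the single morphism $\varphi_F(a\leq b)$ (Example \ref{ex:fully general rank invariant} \ref{item:fully general rank invariant-1-parameter}), and for any map of finite sets $f$ one has $\abs{\im(f)}=\rank(\free(f))$; hence $\rk(F)([a,b])=\abs{\im \varphi_F(a\leq b)}=\rank\bigl(\free(\varphi_F(a\leq b))\bigr)=\rk(\free\circ F)([a,b])$, and the equality of the two persistence diagrams follows, exactly as in your plan, because both are the same M\"obius inversion of now-equal rank invariants. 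What you do differently is to argue directly from the limit-to-colimit description of the rank: you use that $\free$ is a left adjoint and so preserves colimits, and you then compare $\im\pi_k^{\free}$ with $\free(\im\pi_k)$ via a K\"onig's-lemma/compactness argument on the $\sets$ side and a Mittag--Leffler argument on the $\vect$ side, both powered by the finiteness of each $F_m$. This buys you something concrete: the paper's one-liner literally covers only bounded intervals $[a,b]$ with $a,b\in\Z$, whereas $\inter(\Z)$ also contains $(-\infty,b]$, $[a,\infty)$ and $\Z$ itself, for which the limit is an infinite inverse limit that $\free$ does not preserve as an object; your stabilization argument is precisely what closes that (minor) gap in the published proof. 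Your closing observation is also apt: it isolates the reason the analogous identity fails over $\ZZ$ (Example \ref{ex:non-isomorphic}) as the failure of $\free$ to preserve pullback-type limits, while over the totally ordered poset $\Z$ the only limits that arise are evaluations at a least element or inverse limits of finite sets, where the comparison of images still succeeds.
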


Before proving Proposition \ref{prop:isomorphism-merge tree}, note the following: given a set map $f:A\rightarrow B$, it directly follows from the definition of the linearization functor $\free$ that the cardinality of $\im(f)$ is equal to the rank of $\free(f):\free(A)\rightarrow \free(B)$. 

\begin{proof}[Proof of Proposition \ref{prop:isomorphism-merge tree}] Let $I:=[a,b]\in \inter(\Z)$. Then,
\[\rk(F)(I)=\abs{\im (\varphi_F(a\leq b))}=\rank\left({\free(\varphi_F(a\leq b))}\right)=\rk(\free\circ F)(I).
\]
Since, $\dgm_{\sets}^{\Z}(F)$ (resp. $\dgm_{\vect}^{\Z}(\free\circ F)$) is the M\"obius inversion of $\rk(F)$ (resp. $\rk(\free\circ F)$) over the poset $(\inter(\Z),\supset)=(\con(\Z),\supset)$ (Definition \ref{def:generalized PD2}), we also have $\dgm_{\sets}^{\Z}(F)=\dgm_{\vect}^{\Z}(\free\circ F)$.
\end{proof}

We remark that the stability of $\dgm_{\vect}^{\Z}(\free\circ F)$ under perturbations of $F$ was established in \cite{morozov2013interleaving}, whereas the semicontinuity of $\dgm_{\sets}^{\Z}(F)$ was established in \cite[Theorem 8.1]{patel2018generalized}\footnote{This theorem states that, when $\C$ is essentially small symmetric monoidal category with images, the persistence diagram of $F:\R\rightarrow \C$ is stable to all \emph{sufficiently small} perturbations of $F$.} (we remark that both of those works use $\R$ as the indexing poset). Now, by virtue of Proposition \ref{prop:isomorphism-merge tree}, \cite[Theorem 8.1]{patel2018generalized} can be improved to a complete stability theorem: Let $\bott$ be the bottleneck distance, and let $\dint^{\C}$ be the interleaving distance between functors $\Z\rightarrow \C$ (Definitions \ref{def:interleaving_general} and \ref{def:bottleneck distance} in Appendix). We have:

\begin{corollary}\label{cor:promotion}For $F,G:\Z\rightarrow \sets$,
\[\bott\left(\dgm_{\sets}^\Z(F),\dgm_{\sets}^\Z(G)\right)\leq \dint^\sets(F,G).
\]
\end{corollary}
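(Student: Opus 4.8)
The plan is to reduce the claimed set-theoretic bottleneck stability to the already-known vector-space bottleneck stability for discrete merge trees, using Proposition \ref{prop:isomorphism-merge tree} as the bridge. Concretely, Proposition \ref{prop:isomorphism-merge tree} tells us that $\dgm_{\sets}^{\Z}(F) = \dgm_{\vect}^{\Z}(\free\circ F)$ and $\dgm_{\sets}^{\Z}(G) = \dgm_{\vect}^{\Z}(\free\circ G)$, so the left-hand side of the corollary equals $\bott\!\left(\dgm_{\vect}^{\Z}(\free\circ F),\dgm_{\vect}^{\Z}(\free\circ G)\right)$. The point is then that $\dgm_{\vect}^{\Z}(\free\circ F)$ coincides with the (ordinary) barcode $\barc^{\Z}(\free\circ F)$ — this follows because for a functor into $\vect$ indexed by $\Z$ the M\"obius-inversion persistence diagram of Definition \ref{def:generalized PD2} recovers the barcode (Proposition \ref{prop:completeness}, together with Example \ref{ex:fully general rank invariant}\ref{item:fully general rank invariant-1-parameter} identifying our rank invariant with the classical one, and Theorem \ref{thm:interval decomposition1} guaranteeing interval decomposability over $\Z$). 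So the target inequality becomes the classical bottleneck stability statement $\bott(\barc^{\Z}(\free\circ F),\barc^{\Z}(\free\circ G)) \le \dint^{\sets}(F,G)$.

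Next I would handle the right-hand side. The functor $\free:\Sets\to\Vect$ (Definition \ref{def:free}) is functorial and preserves the shift/interleaving structure: if $F$ and $G$ are $\eps$-interleaved as functors $\Z\to\sets$, then $\free\circ F$ and $\free\circ G$ are $\eps$-interleaved as functors $\Z\to\vect$, since one simply post-composes the interleaving natural transformations with $\free$ (and $\free$ takes the identity-shift relations to identity-shift relations). Hence
\[
\dint^{\vect}(\free\circ F,\ \free\circ G)\ \le\ \dint^{\sets}(F,G).
\]
Combining this with the classical bottleneck stability theorem for $\Z$-indexed (equivalently $\R$-indexed, after re-indexing a constructible persistence module by $\Z$ as recalled in the merge-tree discussion) persistence modules of finite-dimensional vector spaces — namely $\bott(\barc^{\Z}(\free\circ F),\barc^{\Z}(\free\circ G)) \le \dint^{\vect}(\free\circ F,\free\circ G)$ — yields the desired chain of inequalities:
\[
\bott\!\left(\dgm_{\sets}^{\Z}(F),\dgm_{\sets}^{\Z}(G)\right)
= \bott\!\left(\barc^{\Z}(\free\circ F),\barc^{\Z}(\free\circ G)\right)
\le \dint^{\vect}(\free\circ F,\free\circ G)
\le \dint^{\sets}(F,G).
\]

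The main obstacle, and the only place requiring genuine care rather than bookkeeping, is verifying that the abstractly-defined persistence diagram $\dgm^{\Z}_{\vect}$ of Definition \ref{def:generalized PD2} really is the ordinary barcode to which the classical stability theorem applies, and that the linearization of a discrete merge tree $\free\circ F$ is indeed a finitely-presentable $\Z$-indexed module so that the bottleneck distance and the stability theorem make sense — this is where one invokes the constructibility hypothesis packaged into the definition of (discrete) merge tree and the re-indexing procedure. A secondary, more routine point is confirming that the interleaving distances $\dint^{\sets}$ and $\dint^{\vect}$ are defined with compatible conventions (same shift functor on $\Z$), so that functoriality of $\free$ gives the displayed inequality between them without a constant; since $\free$ strictly commutes with the shift endofunctor on $\Z$-indexed diagrams, no loss is incurred. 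Once these identifications are in place, the corollary is immediate from the three displayed steps.
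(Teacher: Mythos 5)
Your proposal is correct and follows essentially the same route as the paper: both reduce to the vector-space setting via Proposition \ref{prop:isomorphism-merge tree}, then compare interleaving distances using functoriality of $\free$. The only cosmetic difference is that the paper invokes the isometry theorem (an equality) for the middle step where you use just the stability inequality, which is all that is needed.
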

Though this corollary directly follows from Proposition \ref{prop:isomorphism-merge tree} and arguments similar to those is \cite{morozov2013interleaving}, we provide another concise proof: 
\begin{proof}We have:
\begin{align*}
    \bott\left(\dgm_{\sets}^{\Z}(F),\dgm_{\sets}^{\Z}(G)\right)&=\bott\left(\dgm_{\vect}^{\Z}(\free\circ F),\dgm_{\vect}^{\Z}(\free\circ G)\right)&\mbox{by Proposition \ref{prop:isomorphism-merge tree}}\\
    &= \dint^\vect(\free\circ F,\free\circ G)&\mbox{see below}\\
    &\leq \dint^\sets(F,G)&\mbox{by functoriality of $\free$.}
    \end{align*}
The second equality follows from the celebrated isometry theorem \cite{chazal2016structure,lesnick}.    
\end{proof}
We remark that Corollary \ref{cor:promotion} is true even for constructible functors $F,G:\R\rightarrow\sets$ (Definition \ref{def:constructible} in Appendix), which can be proved via trivial re-indexing of $F,G$ by $\Z$.

\paragraph{Decomposition of a Reeb graph and untwisted Reeb graphs.} We define \emph{untwisted Reeb graphs}, a generalization of merge trees, and extend Corollary \ref{cor:promotion} to untwisted Reeb graphs.

A Reeb graph $M:\ZZ\rightarrow \sets$ is said to be \emph{empty} if $M_s=\emptyset$ for all $s\in \ZZ$. Let $N,N':\ZZ\rightarrow \sets$. The \emph{disjoint union} $N\coprod N':\ZZ\rightarrow \sets$ is defined as follows: for all $s\in \ZZ$, $(N\coprod N')_{s}:=N_s\sqcup N_s'$ and for all $s\leq t$ in $\ZZ$, $\varphi_{N\coprod N'}(s,t):(N\coprod N')_s \rightarrow (N\coprod N')_t$ is defined as $\varphi_{N}(s,t)\sqcup \varphi_{N'}(s,t)$. We say that a nonempty $M:\ZZ\rightarrow \sets$ is \textit{decomposable} if $M$ is isomorphic to $N \coprod N'$ for some nonempty  $N,N':\ZZ\rightarrow \sets$. Otherwise, we say that $M$ is \textit{indecomposable}. We have similar definitions even when the indexing poset $\ZZ$ is replaced by any $I\in\inter(\ZZ)$. In what follows, we will see that the decomposition of a nonempty $M:\ZZ\rightarrow \sets$ into indecomposables parallels the topological decomposition of $\reeb(M)$ (Definition \ref{def:realization}) into path-connected components.

We can explicitly decompose $M$ as follows: Let $J:=\varinjlim M$, which is a nonempty set (not necessarily finite). According to the canonical construction in equation (\ref{eq:set colimit construction}), each $j\in J$ is a subset of the disjoint union $\coprod_{k\in \ZZ} M_{k}$. Note that $M\cong \coprod_{j\in J}N^j$, where each $N^j:\ZZ\rightarrow \sets$ is defined as $N^j_k=\{m\in M_k: m\in j\}$ for $k\in \ZZ$ and the internal morphisms of $N$ are the canonical restrictions of the internal morphisms of $M$. Now we directly have:

\begin{proposition} Any nonempty $M:\ZZ\rightarrow \sets$ decomposes into a disjoint union of indecomposables $\ZZ\rightarrow \sets$, and the decomposition is unique up to a permutation of summands.
\end{proposition}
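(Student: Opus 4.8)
The plan is to build on the explicit decomposition $M\cong\coprod_{j\in J}N^j$ with $J:=\varinjlim M$ constructed just above the statement, and to pair it with a clean characterization of indecomposability in the category of $\ZZ$-indexed $\sets$-diagrams: \emph{a nonempty $M:\ZZ\rightarrow\sets$ is indecomposable if and only if $\varinjlim M$ is a one-point set.} Granting this characterization, existence is immediate. Each $N^j$ is nonempty (the class $j$, being a nonempty subset of $\coprod_{k\in\ZZ}M_k$, meets some $M_k$, so $N^j_k\neq\emptyset$), it is a genuine functor into $\sets$ (each $N^j_k\subseteq M_k$ is finite, and the internal maps of $M$ restrict to $N^j$ because $m\sim\varphi_M(s,t)(m)$ forces these two elements into the same colimit class), and $\varinjlim N^j=\{j\}$ directly from the construction in~(\ref{eq:set colimit construction}); hence $N^j$ is indecomposable.

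For the characterization lemma itself, the easy direction is that if $M\cong N\coprod N'$ with $N,N'$ nonempty, then from the explicit formula~(\ref{eq:set colimit construction}) the equivalence relation $\approx$ never relates an element of $N_k$ to an element of $N'_{k'}$, so $\varinjlim M\cong\varinjlim N\sqcup\varinjlim N'$ has at least two elements; contrapositively, $|\varinjlim M|=1$ implies $M$ indecomposable. The converse is precisely the explicit construction: if $|\varinjlim M|\geq 2$, choose $j_0\in J$ and set $N:=N^{j_0}$ and $N':=\coprod_{j\neq j_0}N^j$, both nonempty, giving a nontrivial decomposition. Here one verifies levelwise that $M_k=\coprod_{j\in J}N^j_k$ is exactly the partition of $M_k$ into the fibers of the colimit leg $i_k:M_k\rightarrow\varinjlim M$, and that the internal morphisms $\varphi_M(s,t)$ respect this partition.

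For uniqueness, suppose $M\cong\coprod_{a\in A}P^a$ with every $P^a$ nonempty and indecomposable. Applying $\varinjlim$ and using that it sends disjoint unions of diagrams to disjoint unions of sets, we get a bijection $A\xrightarrow{\ \sim\ }\varinjlim M=J$, $a\mapsto j(a)$, since each $\varinjlim P^a$ is a singleton. Then, at each level $k$, an element $m\in M_k$ lies in $P^a_k$ if and only if $i_k(m)=j(a)$, because the colimit cocone of $M$ is compatible with the decomposition $\coprod_a P^a$; therefore $P^a=N^{j(a)}$ for every $a$. Thus $\{P^a\}_{a\in A}$ coincides with $\{N^j\}_{j\in J}$ up to relabeling, which is the asserted uniqueness.

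The only step with any content is the characterization lemma, and inside it the observation that two elements of $\coprod_{k\in\ZZ}M_k$ have the same image in $\varinjlim M$ exactly when they are joined by a finite zigzag of internal maps of $M$; this is routine from~(\ref{eq:set colimit construction}), and everything else is bookkeeping. I do not anticipate a genuine obstacle beyond writing this compatibility carefully.
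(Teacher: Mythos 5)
Your proposal is correct and follows essentially the same route as the paper: the paper obtains the proposition directly from the explicit decomposition $M\cong\coprod_{j\in\varinjlim M}N^j$ into colimit fibers, together with the observation (recorded in Remark \ref{rem:Reeb graph decomposition}) that a nonempty diagram is indecomposable exactly when its colimit is a singleton. You simply spell out the verifications (indecomposability of each $N^j$ and the uniqueness argument) that the paper leaves implicit.
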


\begin{Remark}\label{rem:Reeb graph decomposition}
\begin{enumerate}[label=(\roman*)]
\item Recall that a nonempty $M:\ZZ\rightarrow \sets$ gives rise to the Reeb graph $(\reeb(M),\pi)$ (Definition \ref{def:realization}). Given an indecomposable decomposition $M\cong \coprod_{j\in J} N^j$, the path-connected components $(\reeb(M),\pi)$ are precisely $\reeb(N^j)$, $j\in J$ equipped with the restrictions of $\pi$.  \label{item:Reeb graph decomposition1}

\item Given any nonempty $M:\ZZ\rightarrow \sets$, the following are equivalent: (a) a nonempty $M$ is indecomposable, (b) $\reeb(M)$ (Definition \ref{def:realization}) contains only one path-connected component, (c) $\varinjlim M$ is a singleton.  \label{item:Reeb graph decomposition2}
\end{enumerate}
\end{Remark}

Let $M:\ZZ\rightarrow \sets$ be nonempty indecomposable. Then, $\varinjlim M$ is a singleton by Remark \ref{rem:Reeb graph decomposition} \ref{item:Reeb graph decomposition2}. By $\supp(M)$, we denote the support of the unique element of $\varinjlim M$ (Definition \ref{def:full component}).

\begin{definition}[Untwisted Reeb graphs]\label{def:untwisted} 
A nonempty $M:\ZZ\rightarrow \sets$ is said to be \emph{untwisted} if the following holds: For $I\in \inter(\ZZ)$ such that $M|_{I}$ is nonempty, if $M|_I\cong \coprod_{j\in J} N^j$ for some indexing set $J$ and nonempty indecomposables $N^j:I\rightarrow \sets$, then $\varprojlim N^j|_{\supp(N^j)}\neq \emptyset$ for each $j\in J$. 
\end{definition}

Examples of untwisted Reeb graphs include $M$ in Example \ref{ex:Reeb} and merge trees\footnote{Every $\Z$-indexed $\sets$-diagram $F$ can be converted into a $\ZZ$-indexed diagram $D(F)$ which contains the same combinatorial information as $F$ (refer to the paragraph \emph{Re-indexing $\Z$-indexed diagram by $\ZZ$} in Appendix \ref{sec:patel}). In this respect, every merge tree can be viewed as a Reeb graph.}, while non-examples include the Reeb graph of Figure \ref{fig:Curry} (A). Let us characterize untwisted Reeb graphs:

\begin{proposition}\label{prop:characterization of untwisted reeb graphs}
Let $M:\ZZ\rightarrow \sets$ be nonempty and let $(\reeb(M),\pi)$ be the corresponding Reeb graph (Definition \ref{def:realization}). The following are equivalent:
\begin{enumerate}[label=(\roman*)]
    \item $M$ is untwisted.\label{item:characterization1}
    \item $\rk(M)=\rk(\free\circ M)$  (and thus, $\dgm_{\sets}^\ZZ(M)=\dgm_{\vect}^\ZZ(\free\circ M)$).\label{item:characterization2}
    \item  For each real interval $\langle b,d \rangle$ such that  $\pi^{-1}(\langle b,d\rangle)$ is nonempty, the restriction of $\pi$ to each connected component of $\pi^{-1}(\langle b,d\rangle)$ has a section. \label{item:characterization3}

\end{enumerate}

\end{proposition}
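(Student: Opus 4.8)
The plan is to reduce all three conditions, for each interval $I=\langle b,d\rangle_{\ZZ}\in\inter(\ZZ)$ with $M|_I$ nonempty, to one and the same combinatorial statement about the decomposition $M|_I\cong\coprod_{j\in J}N^j$ of $M|_I$ into nonempty indecomposables (the decomposition established above Remark~\ref{rem:Reeb graph decomposition}). First I would set up the dictionary. By Remark~\ref{rem:Reeb graph decomposition}(ii), each $\varinjlim N^j$ is a singleton $\{c_j\}$, the $c_j$ exhaust $\varinjlim M|_I$, and $N^j_k\neq\emptyset$ precisely for $k\in\supp(N^j)=\supp(c_j)$; in particular $\supp(N^j)$ is an interval of $\ZZ$, and, geometrically, $\supp(N^j)=I$ iff the component $\reeb(N^j)$ of $\pi^{-1}(\langle b,d\rangle)$ surjects onto $\langle b,d\rangle$ under $\pi$. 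Using the explicit limit and colimit constructions recalled before Definition~\ref{def:full component} together with the connectedness of $I$, one checks $\varprojlim M|_I=\coprod_j\varprojlim N^j$, that $\varprojlim N^j=\emptyset$ whenever $\supp(N^j)\subsetneq I$ (a compatible tuple must be defined at \emph{every} index of $I$), and that $\psi_{M|_I}$ carries each nonempty $\varprojlim N^j$ constantly to $c_j$. Hence $\rk(M)(I)=\abs{\im\psi_{M|_I}}=\#\{j:\supp(N^j)=I,\ \varprojlim N^j\neq\emptyset\}$, whereas $\rk(\free\circ M)(I)=\Full(M)(I)=\#\{j:\supp(N^j)=I\}$ by Proposition~\ref{thm:full is rank} and the definition of full component; in particular $\rk(M)(I)\le\rk(\free\circ M)(I)$, with equality iff every indecomposable summand $N^j$ of $M|_I$ with $\supp(N^j)=I$ has $\varprojlim N^j\neq\emptyset$. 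Moreover, by Example~\ref{ex:Reeb2}(ii), the elements of $\varprojlim N^j$ are exactly the continuous sections of the restriction of $\pi$ to the component $\reeb(N^j)$, over its image.

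Next I would rephrase \ref{item:characterization1}: $M$ is untwisted iff for every $I\in\inter(\ZZ)$ with $M|_I$ nonempty, every indecomposable summand $N$ of $M|_I$ with $\supp(N)=I$ satisfies $\varprojlim N\neq\emptyset$. One implication is immediate; for the other, suppose $N^j$ is a summand of $M|_I$ with $I':=\supp(N^j)\subsetneq I$. Then $N^j_k=\emptyset$ for $k\notin I'$, so $N^j|_{I'}$ is a nonempty indecomposable of support $I'$ that appears in the (unique) indecomposable decomposition of $M|_{I'}$; applying the hypothesis at $I'$ gives $\varprojlim\bigl(N^j|_{I'}\bigr)=\varprojlim N^j|_{\supp(N^j)}\neq\emptyset$, which is the defining condition for untwistedness. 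Combining this with the first paragraph yields \ref{item:characterization1}$\Leftrightarrow$\ref{item:characterization2}: $\rk(M)=\rk(\free\circ M)$ holds iff equality holds at every $I\in\inter(\ZZ)$, iff every indecomposable summand of full support of every $M|_I$ has a nonempty limit, iff $M$ is untwisted; the parenthetical identity of persistence diagrams then follows because $\dgm^{\ZZ}_{\sets}(M)$ and $\dgm^{\ZZ}_{\vect}(\free\circ M)$ are the M\"obius inversions of $\rk(M)$ and $\rk(\free\circ M)$ over $\con^{\mathrm{op}}(\ZZ)$ (Definition~\ref{def:generalized PD2}, Proposition~\ref{prop:coincide}).

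Finally, for \ref{item:characterization2}$\Leftrightarrow$\ref{item:characterization3}, I would translate the rank equality into the topology of $\pi^{-1}(\langle b,d\rangle)$. Using $\reeb(M|_I)\cong\pi^{-1}(\langle b,d\rangle)$ up to the standard rescaling that identifies $\inter(\ZZ)$ with the relevant real intervals (so that it suffices to test integer-endpoint intervals), the first paragraph shows that \ref{item:characterization2} is equivalent to: for every real interval $\langle b,d\rangle$ with $\pi^{-1}(\langle b,d\rangle)\neq\emptyset$, every connected component of $\pi^{-1}(\langle b,d\rangle)$ that surjects onto $\langle b,d\rangle$ under $\pi$ admits a section of $\pi$ over $\langle b,d\rangle$. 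This is exactly the case $\pi(Y)=\langle b,d\rangle$ of \ref{item:characterization3}, so \ref{item:characterization3}$\Rightarrow$\ref{item:characterization2} is trivial. Conversely, given an arbitrary connected component $Y$ of some $\pi^{-1}(\langle b,d\rangle)$, put $\langle b',d'\rangle:=\pi(Y)$; since $Y$ is maximal connected already inside the larger space $\pi^{-1}(\langle b,d\rangle)$, it is a connected component of $\pi^{-1}(\langle b',d'\rangle)$ and surjects onto $\langle b',d'\rangle$, so the condition supplies a section over $\langle b',d'\rangle=\pi(Y)$, which is \ref{item:characterization3}. The step I expect to be the main obstacle is the dictionary of the first paragraph: making the passage between $\varprojlim$ of a restricted diagram (i.e.\ its sections) and the indecomposable decomposition fully rigorous, and reconciling it with the topology of $\pi^{-1}(\langle b,d\rangle)$ --- including the open/closed-endpoint bookkeeping in the $\ZZ$-versus-$\R$ rescaling; once that is pinned down, the three equivalences are short.
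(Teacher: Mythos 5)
Your proposal is correct and follows essentially the same route as the paper: both rest on the dictionary that $\rk(\free\circ M)(I)$ counts the full components of $M|_I$ (Proposition \ref{thm:full is rank}), that $\rk(M)(I)$ counts those admitting a section, and that sections of $\pi$ over a component correspond to elements of the limit of the corresponding indecomposable summand. The only difference is organizational — you prove \ref{item:characterization1}$\Leftrightarrow$\ref{item:characterization2} directly via a clean restatement of untwistedness in terms of full-support summands and then handle \ref{item:characterization2}$\Leftrightarrow$\ref{item:characterization3}, whereas the paper runs the cycle \ref{item:characterization1}$\Rightarrow$\ref{item:characterization3}$\Rightarrow$\ref{item:characterization2}$\Rightarrow$\ref{item:characterization1} — and the endpoint bookkeeping you flag is glossed over in the paper as well.
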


\begin{proof}
We will utilize geometric insight of Example \ref{ex:Reeb2} and Remark \ref{rem:Reeb graph decomposition} throughout the proof.

\ref{item:characterization1}$\Rightarrow$\ref{item:characterization3}: It suffices to consider an interval $\langle b,d \rangle\subset \R$ such that $b,d\in\Z\cup\{-\infty,\infty\}$ and $\pi^{-1}(\langle b,d \rangle)$ is nonempty. Let $I:=\langle b,d\rangle_{\ZZ}$ and consider the restricted diagram $M|_I$. Note that the assumption $\pi^{-1}(\langle b,d \rangle)\neq \emptyset$ implies that $M|_I$ is nonempty. Now assume that $M|_I$ has an indecomposable decomposition $M|_I\cong \coprod_{j\in J} N^{j}$.  Then $\reeb(N^j)$ are exactly the connected components of $\reeb(M|_I)$ (Remark \ref{rem:Reeb graph decomposition} \ref{item:Reeb graph decomposition1}). Also, there exists a canonical bijection from $\varprojlim N^j|_{\supp(N^j)}$ to the sections of the restriction $\pi|_{\reeb(N^j)}$ (cf. Example \ref{ex:Reeb2} \ref{item:Reeb2 2}). For each $j\in J$, since $\varprojlim N^j|_{\supp(N^j)}\neq \emptyset$, $\pi|_{\reeb(N^j)}$ has at least one section.

\ref{item:characterization3}$\Rightarrow$\ref{item:characterization2}: 
Fix $I=\langle b,d \rangle_{\ZZ}$. By assumption, every $\langle b,d \rangle$-full-component has a section. Invoking that $\rk(\free\circ M)(I)$ is the number of $\langle b,d \rangle$-full-components (Proposition \ref{thm:full is rank}), whereas $\rk(M)(I)$ stands for the number of $\langle b,d \rangle$-full-components \emph{with a section} (Definition \ref{def:full components} \ref{item:full components4} and Example \ref{ex:Reeb2} \ref{item:Reeb2 4}), we have $\rk(M)(I)=\rk(\free\circ M)(I)$. 

\ref{item:characterization2}$\Rightarrow$\ref{item:characterization1}:  Fix any $I\in \inter(\ZZ)$ such that $M|_{I}$ is nonempty and assume that $M|_I\cong \coprod_{j\in J} N^j$ for some indexing set $J$ and nonempty indecomposables $N^j:I\rightarrow \sets$. 

Fix $j\in J$ and let $K:=\supp(N^j)=\langle b,d \rangle_{\ZZ}$. By assumption, the number $\rk(M)(K)$ of $\langle b,d \rangle$-full-components  of $\reeb(M)$ is equal to the number $\rk(\free\circ M)(K)$ of $\langle b,d \rangle$-full-components of $\reeb(M)$ \emph{which have a section}. 
Therefore, every $\langle b,d \rangle$-full-component has a section. By the choice of $K$, $\reeb(N^j)$ is a $\langle b,d \rangle$-full-component and thus $\reeb(N^j)$ has a section. This means that $\varprojlim N^j|_{K} \neq \emptyset$, as desired.
\end{proof}

\paragraph{Bottleneck stability for untwisted Reeb graphs.} We extend the stability result in Corollary \ref{cor:promotion} to a class of \emph{untwisted Reeb graphs}.

An \emph{interleaving} distance $\dint^{\C}$ between zigzag modules $\ZZ\rightarrow \C$ is defined when $\C$ is cocomplete (Definition \ref{def:interleaving by botnan} in Appendix). The following theorem can be proved in the same way as Corollary \ref{cor:promotion} except utilizing the algebraic stability of zigzag modules (Theorem \ref{thm:bottleneck stability} in Appendix) in lieu of the isometry theorem.

\begin{theorem}\label{thm:stability for sets}For any untwisted Reeb graphs $M,N:\ZZ\rightarrow \sets$,
\begin{equation}\label{eq:stability for reeb graphs}
    \bott\left(\dgm_\sets^\ZZ(M),\dgm_\sets^\ZZ(N)\right)\leq  2\cdot\dint^\Sets(M,N).
\end{equation}
\end{theorem}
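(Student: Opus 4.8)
The plan is to run the argument of Corollary \ref{cor:promotion} verbatim, with the isometry theorem replaced by the algebraic stability of zigzag modules. Since $M$ and $N$ are untwisted, Proposition \ref{prop:characterization of untwisted reeb graphs} \ref{item:characterization2} gives $\dgm_\sets^\ZZ(M)=\dgm_\vect^\ZZ(\free\circ M)$ and $\dgm_\sets^\ZZ(N)=\dgm_\vect^\ZZ(\free\circ N)$. Moreover, the modules $\free\circ M,\free\circ N:\ZZ\to\vect$ are interval decomposable (Theorem \ref{thm:interval decomposition0}), so by Corollary \ref{cor:new formula for 0th} (equivalently, Proposition \ref{prop:completeness}) the functions $\dgm_\vect^\ZZ(\free\circ M)$ and $\dgm_\vect^\ZZ(\free\circ N)$ on $\con(\ZZ)=\inter(\ZZ)$ are exactly the non-negative multiplicity functions of the barcodes $\dgmzz(\free\circ M)$ and $\dgmzz(\free\circ N)$. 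In particular the left-hand side of (\ref{eq:stability for reeb graphs}) is a bona fide bottleneck distance between multisets of intervals, and
\[\bott\left(\dgm_\sets^\ZZ(M),\dgm_\sets^\ZZ(N)\right)=\bott\left(\dgmzz(\free\circ M),\dgmzz(\free\circ N)\right).\]

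It remains to bound the right-hand side. First, $\free:\Sets\to\Vect$ is left adjoint to the forgetful functor, hence preserves colimits, and therefore commutes with the constructions (left Kan extensions / smoothings) defining the interleaving distance $\dint$ for $\ZZ$-indexed diagrams (Definition \ref{def:interleaving by botnan}); thus post-composition with $\free$ carries a $\delta$-interleaving between $M$ and $N$ to a $\delta$-interleaving between $\free\circ M$ and $\free\circ N$, which gives $\dint^\Vect(\free\circ M,\free\circ N)\le\dint^\Sets(M,N)$ --- this is the zigzag analogue of the last step in the proof of Corollary \ref{cor:promotion}. Second, the algebraic stability theorem for zigzag modules (Theorem \ref{thm:bottleneck stability}) yields
\[\bott\left(\dgmzz(\free\circ M),\dgmzz(\free\circ N)\right)\le 2\cdot\dint^\Vect(\free\circ M,\free\circ N).\]
Chaining the three displays proves (\ref{eq:stability for reeb graphs}).

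The only step requiring genuine care --- and the one I expect to be the main obstacle --- is verifying that the hypotheses of Theorem \ref{thm:bottleneck stability} actually apply, i.e.\ that the interleaving distance $\dint^\Sets$ of Definition \ref{def:interleaving by botnan} is precisely the notion for which Botnan--Lesnick's theorem is stated, and that $\free$ intertwines the ``thickening'' procedures used on the $\Sets$ and $\Vect$ sides (again a consequence of $\free$ preserving colimits). Everything else is a direct assembly of results already established above, with no new estimates needed. It is worth noting that the multiplicative factor $2$ is inherited entirely from Theorem \ref{thm:bottleneck stability} (mirroring the known fact that the bottleneck distance of $0$-th level set barcodes lower-bounds the interleaving distance of Reeb graphs only up to a factor of $2$), whereas the passage through $\free$ is $1$-Lipschitz.
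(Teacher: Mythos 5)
Your proposal is correct and follows essentially the same three-step chain as the paper's proof: identify $\dgm_\sets^\ZZ$ with $\dgm_\vect^\ZZ(\free\circ\,\cdot\,)$ via Proposition \ref{prop:characterization of untwisted reeb graphs}, apply Theorem \ref{thm:bottleneck stability}, and use functoriality of $\free$ to compare interleaving distances. The only difference is that you spell out why $\free$ is $1$-Lipschitz for the zigzag interleaving distance of Definition \ref{def:interleaving by botnan} (namely, $\free$ preserves colimits and hence commutes with the extension functor $E$), a point the paper compresses into the phrase ``by functoriality of $\free$.''
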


\begin{proof}We have:
\begin{align*}
    \bott\left(\dgm_{\sets}^{\ZZ}(M),\dgm_{\sets}^{\ZZ}(N)\right)&=\bott\left(\dgm_{\vect}^{\ZZ}(\free\circ M),\dgm_{\vect}^{\ZZ}(\free\circ N)\right)&\mbox{by Proposition \ref{prop:characterization of untwisted reeb graphs}}\\
    &\leq 2\cdot \dint^\Vect(\free\circ M,\free\circ N)&\mbox{Theorem \ref{thm:bottleneck stability}}\\
    &\leq 2\cdot\dint^\Sets(M,N)&\mbox{by functoriality of $\free$.}
    \end{align*}
\end{proof}

We remark that the inequality in (\ref{eq:stability for reeb graphs}) is tight:  $\dgm_\sets^\ZZ(M)$ and $\dgm_\sets^\ZZ(N)$ are equal to $\dgm_\vect^\ZZ(\free\circ M)$ and $\dgm_\vect^\ZZ(\free\circ N)$ respectively and the tightness of the following inequality is known \cite{botnan2018algebraic}:
\[ \bott\left( \dgm_\vect^\ZZ(\free\circ M), \dgm_\vect^\ZZ(\free\circ N)\right)\leq  2\cdot\dint^\sets(M,N).
\]
We provide a concrete example of which the both sides in (\ref{eq:stability for reeb graphs}) coincide:
\begin{example}[Tightness]\label{ex:tight}Let $M:\ZZ\rightarrow \sets$ be defined as
\[M_{(0,0)}=\{v_1\},\ M_{(1,1)}=\{v_2\},\ \ M_{(1,0)}=\{e_1,e_2\}\]
and any other $M_{(i,j)}$ is the empty set. The maps $\varphi_M((1,0),(0,0)),\varphi_M((1,0),(1,1))$ are the unique surjections See Figure \ref{fig:tightness} (A).  One can check that $\dgm_{\sets}^\ZZ(M)(I)= 1$ if $I=[1,2]_{\ZZ}$ or $I=(1,2)_{\ZZ}$, and $\dgm_{\sets}^\ZZ(M)(I)= 0$ otherwise. 

On the other hand, define $N:\ZZ\rightarrow \sets$ as 
\[N_{(0,0)}=\{v_1\},\ N_{(1,1)}=\{v_2\},\ \ N_{(1,0)}=\{e_1\}\] and any other $N_{(i,j)}$ is the empty set.  The maps $\varphi_N((1,0),(0,0)),\varphi_N((1,0),(1,1))$ are the unique bijections. See Figure \ref{fig:tightness} (B).  One can check that $\dgm_{\sets}^\ZZ(N)(I)= 1$ if $I=[1,2]_{\ZZ}$ and $\dgm_{\sets}^\ZZ(N)(I)= 0$ otherwise. Checking that \[\bott\left(\dgmpd_\sets(M),\dgmpd_\sets(N)\right)=1/2,\hspace{2mm}\mbox{and}\ \dint^{\sets}(M,N)=1/4,\]
demonstrates the tightness of the inequality in (\ref{eq:stability for reeb graphs}).
\end{example}

\begin{figure}
    \centering
    \includegraphics[width=0.7\textwidth]{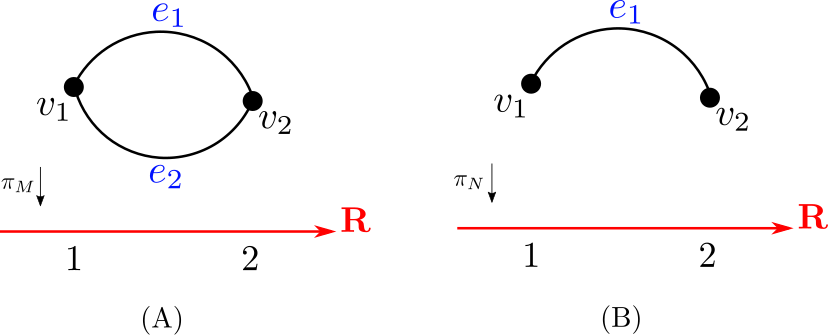}
    \caption{Illustration for Example \ref{ex:tight}. (A) The Reeb graph corresponding to $M$. (B)  The Reeb graph corresponding to $N$. Observe that for every interval $I\in\R$, the preimages $\pi_M^{-1}(I)$ and $\pi_N^{-1}(I)$ consist solely of connected components which allow a section. This demonstrate that $M$ and $N$ are untwisted.}
    \label{fig:tightness}
\end{figure}

\section{Discussion}\label{sec:discussion}

We have extended the notions of rank invariant and generalized persistence diagram due to Patel \cite{patel2018generalized} to the setting of generalized persistence modules $\Pb\rightarrow \C$, when $\Pb$ and $\C$ satisfy mild assumptions.
The rank invariant and persistence diagram defined in this paper generalize those in \cite{botnan2018algebraic,carlsson2009zigzag,carlsson2009theory,cohen2007stability,patel2018generalized,ZC05}. In particular, our construction of the rank invariant yields a complete invariant for interval decomposable persistence modules $\Pb\rightarrow \vect$ (which include the case of zigzag modules). Along the way, the $0$-th level set barcode of a Reeb graph has been interpreted with a novel viewpoint. This leads to the promotion of a Patel's theorem and opens up the possibility of the existence of another efficient algorithm for computing the $0$-th level set barcode of a Reeb graph or a Morse-type function.

Many questions remain unanswered: \begin{itemize}
    \item[(1)] To what extent can we generalize  stability results about persistence diagrams/barcodes beyond those results in Section \ref{sec:set-stability} and those in  \cite{botnan2018algebraic,CCG09,cohen2007stability,mccleary2018bottleneck,patel2018generalized}? In particular, for arbitrary $M:\ZZ\rightarrow \sets$: how to address the stability of $\dgm_{\sets}^{\ZZ}(M)$ given that $\dgm_{\sets}^{\ZZ}(M)$ could take negative values?\footnote{We saw such a case in Example \ref{ex:non-isomorphic}. Non-positive persistence diagrams also appear in \cite{betthauser2019graded}.} See Theorem \ref{thm:stability of gen rank} for
the stability of the generalized rank invariant (Definition \ref{def:fully generalized rank invariant}) with respect to the interleaving
distance.
    
    \item[(2)] For persistence modules $\Pb\rightarrow \vect$ which are \emph{not} interval decomposable, how faithful is the rank invariant? 

\item[(3)] We wonder whether interval modules can be characterized in terms of persistence diagrams as follows. Given an \emph{indecomposable} $F:\Pb\rightarrow \vect$, if $\dgm^{\Pb}(F)$ vanishes on $\con(\Pb)\setminus \inter(\Pb)$ and has non-negative values on $\inter(\Pb)$: is $F$ an interval module? This question is motivated from Example \ref{ex:Alex}. 

\item[(4)] In order to compute level set barcodes of Morse-type functions, algorithms in \cite{carlsson2009zigzag,milosavljevic2011zigzag} make use of a sequence of operations on matrices. Utilizing the results in Section \ref{sec:reeb graph barcode}, can we establish more efficient algorithms for specifically computing  $0$-th level set barcodes?  

\item[(5)] An $\R^d$-indexed persistence module $M$ can often be encoded  as a $\Pb$-indexed module $F_M$ for some connected finite poset $\Pb$; see \cite[Section 1.3]{miller2020homological}. Small perturbations to $M$ in the interleaving distance (Definition \ref{def:interleaving_general}) are expected to result in small variations of $\dgm^{\Pb}(F_M)$. How can we quantify the \emph{stability} of the assignment $M\mapsto \dgm^{\Pb}(F_M)$?
Answering  this question will enable us to utilize $\dgm^{\Pb}(F_M)$ in statistical studies of multiparameter persistence modules (e.g. efficient clustering methods for multiparameter persistence modules).
\end{itemize}



\appendix

\section{Limits and Colimits}
We recall the notions of limit and colimit \cite{mac2013categories}. Throughout this section $I$ will stand for a small category.
\begin{definition}[Cone]\label{def:cone} Let $F:I\rightarrow \C$ be a functor. A \emph{cone} over $F$ is a pair $\left(L,(\pi_x)_{x\in \ob(I)}\right)$ consisting of an object $L$ in $\C$ and a collection $(\pi_x)_{x\in \ob(I)}$ of morphisms $\pi_x:L \rightarrow F(x)$ that commute with the arrows in the diagram of $F$, i.e. if $g:x\rightarrow y$ is a morphism in $I$, then $\pi_y= F(g)\circ \pi_x$ in $\C$, i.e. the diagram below commutes.
\end{definition}
\[\begin{tikzcd}F(x)\arrow{rr}{F(g)}&&F(y)\\
& L \arrow{lu}{\pi_x} \arrow{ru}[swap]{\pi_y}\end{tikzcd}\]

In Definition \ref{def:cone}, the cone $\left(L,(\pi_x)_{x\in \ob(I)}\right)$ over $F$ will sometimes be denoted simply by $L$, suppressing the collection $(\pi_x)_{x\in \ob(I)}$ of morphisms if no confusion can arise. A limit of a diagram $F:I\rightarrow \C$ is a terminal object in the collection of all cones:

\begin{definition}[Limit]\label{def:limit} Let $F:I\rightarrow \C$ be a functor. A \emph{limit} of $F$ is a cone over $F$, denoted by $\left(\varprojlim F,\ (\pi_x)_{x\in \ob(I)} \right)$ or simply $\varprojlim F$, with the following \emph{terminal} property: If there is another cone $\left(L',(\pi'_x)_{x\in \ob(I)} \right)$ of $F$, then there is a \emph{unique} morphism $u:L'\rightarrow \varprojlim F$ such that $\pi_x'=\pi_x\circ u$ for all $x\in \ob(I)$.
\end{definition}

\begin{Remark}It is possible that a diagram does not have a limit at all. However, if a diagram does have a limit then the terminal property of the limit guarantees its uniqueness up to isomorphism. For this reason, we will sometimes refer to a limit as \emph{the} limit of a diagram. 
\end{Remark}

Cocones and colimits are defined in a dual manner:
\begin{definition}[Cocone]\label{def:cocone} Let $F:I\rightarrow \C$ be a functor. A \emph{cocone} over $F$ is a pair $\left(C,(i_x)_{x\in \ob(I)}\right)$ consisting of an object $C$ in $\C$ and a collection $(i_x)_{x\in \ob(I)}$ of morphisms $i_x:F(x)\rightarrow C$ that commute with the arrows in the diagram of $F$, i.e. if $g:x\rightarrow y$ is a morphism in $I$, then $i_x= i_y\circ F(g)$ in $\C$, i.e. the diagram below commutes.
\end{definition}
\[\begin{tikzcd}
&C\\
F(x)\arrow{ru}{i_x}\arrow{rr}[swap]{F(g)}&&F(y)\arrow{lu}[swap]{i_y}
\end{tikzcd}\]

In Definition \ref{def:cocone}, a cocone $\left(C,(i_x)_{x\in \ob(I)}\right)$ over $F$ will sometimes be denoted simply by $C$, suppressing the collection $(i_x)_{x\in \ob(I)}$ of morphisms. A colimit of a diagram $F:I\rightarrow \C$ is an initial object in the collection of cocones over $F$:

\begin{definition}[Colimit]\label{def:colimit}Let $F:I\rightarrow \C$ be a functor. A \emph{colimit} of $F$ is a cocone, denoted by $\left(\varinjlim F,\ (i_x)_{x\in \ob(I)}\right)$ or simply $\varinjlim F$, with the following \emph{initial} property: If there is another cocone $\left(C', (i'_x)_{x\in \ob(I)}\right)$ of $F$, then there is a \emph{unique} morphism $u:\varinjlim F\rightarrow C'$  such that $i'_x=u\circ i_x$ for all $x\in \ob(I)$.
\end{definition}

\begin{Remark}It is possible that a diagram does not have a colimit at all. However, if a diagram does have a colimit then the initial property of the colimit guarantees its uniqueness up to isomorphism. For this reason, we will sometimes refer to a colimit as \emph{the} colimit of a diagram. 
\end{Remark}

\begin{Remark}[Restriction of an indexing poset]\label{rem:restriction}Let $\Pb$ be any poset and let $\Q$ be a subposet of $\Pb$. In categorical language, $\Q$ is a full subcategory of $\Pb$. Let $F:\Pb \rightarrow \C$ be a functor.
\begin{enumerate}[label=(\roman*)]
    \item  Assume that the limit of the restriction $F|_{\Q}$ exists. For any cone $\left(L', (\pi_p')_{p\in \Pb}\right)$ over $F$, its restriction $\left(L', (\pi_p')_{p\in \Q}\right)$ is a cone over the restriction $F|_\Q:\Q\rightarrow \C$. Therefore, by the terminal property of the limit  $\left(\varprojlim F|_{\Q}, (\pi_q)_{q\in \Q}\right)$, there exists the unique morphism $u:L' \rightarrow \varprojlim F|_{\Q}$ such that  $\pi_q'=\pi_q\circ u$ for all $q\in \Q$.
    \item  Assume that the colimit of the restriction $F|_{\Q}$ exists. For any cocone $\left(C', (i_p')_{p\in \Pb}\right)$ over $F$, its restriction $\left(C', (i_p')_{p\in \Q}\right)$ is a cocone over the restriction $F|_\Q:\Q\rightarrow \C'$. Therefore, by the initial property of $\varinjlim F|_{\Q}$, there exists the unique morphism $u:\varinjlim F|_{\Q} \rightarrow C'$ such that  such that $i'_q=u\circ i_q$ for all $q\in \Q$.
\end{enumerate}
\end{Remark}

\section{The rank invariant of a standard  persistence module}\label{sec:persistence modules}

In this section we review some important (standard) results about the rank invariant of one-dimensional or multidimensional persistence modules. Recall the poset $\U$ from Definition \ref{def:posets}.

\begin{definition}[Rank invariant of a persistence module]\label{def:rank invariant for a persistence module}Let $F:\R\rightarrow \vect$ be any persistence module. The \emph{rank invariant} of $F$ is defined as the map $\rk(F):\U\rightarrow \Z_+$ which sends each $\bu=(u_1,u_2)\in\U$ to $\mathrm{rank}\left(F(u_1\leq u_2)\right)$.
\end{definition}

\begin{Remark}[Category theoretical interpretation of the rank invariant]\label{rem:rank invariant of a persistence module}  Let $F:\R\rightarrow \vect$ be a persistence module. For any $\bu=(u_1,u_2)\in \U$, it is not difficult to check that \[\left(F_{u_1},\left(\varphi_F(u_1,t)\right)_{t\in[u_1,u_2]}\right)\ \mbox{and}\  \left(F_{u_2},\left(\varphi_F(t,u_2)\right)_{t\in[u_1,u_2]}\right)\] are a limit and a colimit of $F|_{[u_1,u_2]}$, respectively. The canonical LC map from $F_{u_1}$ to $F_{u_2}$ is definitely $\varphi_F(u_1,u_2)$, which is identical to $\varphi_F(t,u_2)\circ\varphi_F(u_1,t)$, for any $t\in [u_1,u_2]$. Therefore, $\rk(F)(\bu)$ can be regarded as the rank of the canonical LC map of $F|_{[u_1,u_2]}$.
\end{Remark}

\begin{Remark}\label{rem:meaning1} In Definition \ref{def:rank invariant for a persistence module}, $\rk(F)(\bu)$ for $\bu=(u_1,u_2)$ counts all the persistence features of the persistence module $F$ which are born before or at $u_1$ and die after $u_2$. Also, when $u_1=u_2$, $\rk(M)(\bu)$ is the dimension of $F_{u_1}$.\end{Remark}

\begin{Remark}[Rank invariant is order-reversing]\label{rem:rank invariant decreases}In Definition \ref{def:rank invariant for a persistence module}, for any pair $\bu=(u_1,u_2)\leq\bu'=\left(u_1',u_2'\right)$ in $\U$, since  \[\varphi_F\left(u_1', u_2'\right)=\varphi_F\left(u_2, u_2'\right)\circ \varphi_F(u_1, u_2)\circ \varphi_F\left(u_1', u_1\right),\] it holds that $\rk(F)\left(\bu'\right)\leq \rk(F)(\bu).$ Therefore, the map $\rk(F):\U\rightarrow \Z_+$ is an order-reversing map. This result generalizes to \cite[Proposition 4.4]{mccleary2018bottleneck}. Also, see \cite[Proposition 3.7]{puuska2017erosion}.  
\end{Remark}

\begin{theorem}[Completeness of the rank invariant for one-dimensional modules \cite{carlsson2009theory}]\label{thm:persistence module}
 The rank invariant defined in Definition \ref{def:rank invariant for a persistence module} is a complete invariant for one-dimensional persistence modules, i.e. if there are two \emph{constructible} persistence modules $F,G:\R\rightarrow \vect$ such that $\rk(F)=\rk(G)$, then $F$ and $G$ are isomorphic (see Definition \ref{def:constructible} for the meaning of \emph{constructible}).
\end{theorem}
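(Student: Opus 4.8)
The plan is to reduce everything to a discrete indexing poset and then recover the barcode from the rank invariant by M\"obius inversion, exactly as was done for zigzag modules in Section~\ref{sec:the rank invariant}.

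First I would re-index. Since $F$ and $G$ are constructible, the construction of Appendix~\ref{sec:patel} (paragraph \emph{Re-indexing a constructible persistence module by $\Z$}) yields $\Z$-indexed modules $D(F),D(G):\Z\rightarrow\vect$ encoding the same combinatorial data. The point is that $D$ merely collapses each ``constructibility block'' of $\R$ to a single integer while keeping all internal maps, so it intertwines the invariants in play: the barcode $\barc^{\R}(F)$ is recovered from $\barc^{\Z}(D(F))$ by rescaling interval endpoints, and for $u_1\leq u_2$ in $\R$ one has $\mathrm{rank}(\varphi_F(u_1,u_2))=\mathrm{rank}(\varphi_{D(F)}(i,j))$, where $i,j\in\Z$ index the blocks of $u_1,u_2$; by Remark~\ref{rem:rank invariant of a persistence module} the left-hand side is $\rk(F)$ evaluated at $(u_1,u_2)$ and the right-hand side is $\rk(D(F))$ evaluated at $[i,j]$. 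Consequently $\rk(F)=\rk(G)$ forces $\rk(D(F))=\rk(D(G))$, and it suffices to prove that two interval-decomposable $\Z$-indexed modules with equal rank invariants have equal barcodes.

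Next I would invert over $\Z$. The poset $\Z$ is connected and essentially finite with $\con(\Z)=\inter(\Z)$, and $D(F),D(G)$ are interval decomposable by Theorem~\ref{thm:interval decomposition0}. Hence Proposition~\ref{prop:completeness} applies: for every $I\in\inter(\Z)$ the multiplicity of $I$ in $\barc^{\Z}(D(F))$ equals the value of the M\"obius formula~(\ref{eq:mobius formula}) evaluated on $\rk(D(F))$; since every interval of $\Z$ has perimeter at most $2$, this reads
\[
\rk(D(F))(I)-\sum_{J\in I^1}\rk(D(F))(J)+\sum_{K\in I^2}\rk(D(F))(K),
\]
and the same for $G$. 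Therefore $\rk(D(F))=\rk(D(G))$ yields $\barc^{\Z}(D(F))=\barc^{\Z}(D(G))$ as multisets; undoing the re-indexing gives $\barc^{\R}(F)=\barc^{\R}(G)$, and the Azumaya--Krull--Remak--Schmidt uniqueness of interval decompositions (the discussion after Definition~\ref{def:interval decomposable}) gives $F\cong G$.

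The hard part is really only the first step: verifying that the re-indexing functor $D$ is faithful with respect to both $\rk$ and the barcode. Conceptually there is nothing subtle here --- it is bookkeeping with the open/half-open interval types and the $\pm\infty$ endpoints of constructible $\R$-modules, which is precisely what the re-indexing by $\Z$ is built to package cleanly --- but it is where a careful writeup must spend its effort. Once one is over $\Z$, the argument is the verbatim one-parameter specialization of Propositions~\ref{prop:bridge}, \ref{prop:Mobius} and \ref{prop:completeness}.
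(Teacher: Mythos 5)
The paper does not actually prove Theorem \ref{thm:persistence module}: it is quoted from \cite{carlsson2009theory}, whose original argument goes through the structure theorem for graded $\F[t]$-modules and the classical inclusion--exclusion formula for the persistence diagram. Your proposal is therefore not comparable to a proof "in the paper," but it is a correct and rather pleasant self-contained derivation \emph{from} the paper's own machinery: re-index by $\Z$, observe $\con(\Z)=\inter(\Z)$ with every interval of perimeter at most $2$, and invoke Proposition \ref{prop:completeness} (whose one-parameter specialization is exactly the classical $\mu$-formula). What this route buys is that the classical completeness theorem is exhibited as a corollary of the general M\"obius-inversion framework rather than an independent input; what it costs is the re-indexing bookkeeping you correctly identify as the real work.

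One concrete point you should not wave away: $F$ is $S$-constructible and $G$ is $T$-constructible for possibly different finite sets, and the functor $D$ depends on that set. The comparison $\rk(F)=\rk(G)\Rightarrow\rk(D(F))=\rk(D(G))$ only makes sense after first replacing both $S$ and $T$ by the common refinement $S\cup T$ (both modules remain constructible with respect to it, since refining the set only weakens the per-block isomorphism conditions). Relatedly, your claim that $\rk(D(F))$ is determined by $\rk(F)$ must also be checked on the \emph{infinite} intervals of $\Z$, which lie in $\con(\Z)$ and enter formula (\ref{eq:mobius formula}): for $[a,\infty)$ the value stabilizes to $\rank\,\varphi_{D(F)}(a,n)$ by eventual constancy, and for $(-\infty,b]$ the limit of the restricted diagram vanishes because $D(F)_i=0$ for $i\leq 0$, so the value is $0$; both are indeed functions of $\rk(F)$. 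With those two points supplied, the argument is complete, and the final step needs only that equal barcodes give isomorphic direct sums of interval modules (Azumaya--Krull--Remak--Schmidt is used for uniqueness of the barcode, not for this last implication).
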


The rank invariant can also be defined for multidimensional modules $F:\R^n \rightarrow \vect$, $n>1$ : For any pair $\ba\leq \bb$ in $\R^n$, let $\rk(F)(\ba,\bb):=\mathrm{rank}\left(\varphi_F(\ba, \bb)\right)$. This defines a function from the set $\{(\ba,\bb)\in \R^n\times \R^n: \ba\leq \bb \}$ to $\Z_+$. However, the map $\rk(F)$ is not a complete invariant for multidimensional modules, i.e. for any $n>1$, there exists a pair of persistence modules $F,G:\R^n\rightarrow \vect$ that are not isomorphic but $\rk(M)=\rk(N)$ \cite{carlsson2009theory}.


\section{Comparison with Ville Puuska's rank invariant}\label{sec:puuska}

In \cite{puuska2017erosion}, Ville Puuska considers the set
\[\Dgm_\Pb:=\{(a,b)\in \Pb \times \Pb: a<b\}\] and defines
the rank invariant of a functor $F:\Pb\rightarrow \C$ as the map $dF:\Dgm_{\Pb}\rightarrow \C$ sending $(a,b)$ to $\im \left(\varphi_F(a,b)\right)\in \ob(\C)$. Even though this definition is a straightforward generalization of the rank invariant of \cite{carlsson2009theory}, when $\Pb=\ZZ$ and $\C=\vect$, this definition is not anywhere near a complete invariant of $F:\Pb\rightarrow \C$. Namely, there exists a pair of zigzag modules $M,N$ such that $\dint(M,N)=+\infty$ (Definition \ref{def:interleaving by botnan}) whereas $dM\cong dN$:

\begin{example}\label{ex:criticism} Consider the two zigzag modules $M,N:\ZZ\rightarrow \vect$ defined as follows: $M:=I^{(-\infty,\infty)_\ZZ}$ and $N$ is defined as
\begin{align*}
    &N_{(i,i)}=\F, \hspace{10mm}N_{(i+1,i)}=\F^2,\\
    &\varphi_N((i,i-1),(i,i))=\pi_1,\\&\varphi_N((i+1,i),(i,i))=\pi_2,
\end{align*}
where $\pi_1,\pi_2:\F^2\rightarrow \F$ are the canonical projections to the first and the second coordinate, respectively. Note that $dM(a,b)\cong dN(a,b)\cong \F$ for all $(a,b)\in \Dgm_\Pb$. However, it is not difficult to check that $\dgmzz(M)=\lmulti(-\infty,\infty)_{\ZZ}\rmulti$ and $\dgmzz(N)=\lmulti (i,i+2)_{\ZZ}:i\in \Z \rmulti$. This implies that $\bott\left(\dgmzz(M),\dgmzz(N)\right)=+\infty$, and in turn $\dint(M,N)=+\infty$ by Theorem \ref{thm:bottleneck stability}. 

\end{example}

\section{Interleaving distance and existing stability theorems}\label{sec:the proof of isomorphism thm}

\subsection{Interleaving distance}\label{sec:interleaving distance}

We review the \emph{interleaving distance} between $\R^d$ (or $\Z^d$)-indexed functors and between $\ZZ$-indexed functors \cite{botnan2018algebraic,CCG09,lesnick}.

\paragraph{Natural transformations.} We recall the notion of \emph{natural transformations} from category theory \cite{mac2013categories}: Let $\mathcal{C}$ and $\mathcal{D}$ be any categories and let $F,G:\mathcal{C}\rightarrow \mathcal{D}$ be any two functors. A natural transformation $\psi:F\Rightarrow G$ is a collection of morphisms $\psi_c: F_c\rightarrow G_c$ in $\mathcal{D}$  for all objects $c\in\mathcal{C}$ such that for any morphism $f:c\rightarrow c'$ in $\mathcal{C}$, the following diagram commutes:

\begin{center}
\begin{tikzcd}
	F_c \arrow{r}{F(f)} \arrow{d}{\psi_{c}}
	&F_{c'} \arrow{d}{\psi_{c'}}\\
	G_{c} \arrow{r}{G(f)} &G_{c'}.
\end{tikzcd}
\end{center}
Natural transformations $\psi:F\rightarrow G$ are considered as morphisms in the category $\mathcal{D}^\mathcal{C}$ of all functors from $\mathcal{C}$ to $\mathcal{D}.$

\paragraph{The interleaving distance between $\R^d$ (or $\Z^d$)-indexed functors.} In what follows, for any $\eps\in [0,\infty)$, we will denote the vector $\eps(1,\ldots,1)\in \R^d$ by $\vec{\eps}$. The dimension $d$ will be clearly specified in context.

\begin{definition}[$\mathbf{v}$-shift functor]\label{def:eps-shifting} Let $\mathcal{C}$ be any category.  For each $\mathbf{v}\in [0,\infty)^n$,  the $\bv$-shift functor $(-)(\bv):\C^{\R^d}\rightarrow \C^{\R^d}$ is defined as follows: 
\begin{enumerate}[label=(\roman*)]
    \item (On objects) Let $F:\R^d\rightarrow \mathcal{C}$ be any functor. Then the functor $F(\mathbf{v}):\R^d\rightarrow \mathcal{C}$ is defined as follows: For any $\ba\in \R^d$, 
	\[F(\mathbf{v})_\ba:=F_{\ba+\bv}.\]
	Also, for another $\ba'\in \R^d$ such that $\ba\leq \ba'$ we define
	\[\varphi_{F(\mathbf{v})}(\ba,\ba'):= \varphi_{F}\left(\ba+\bv, \ba'+\bv\right).\]
In particular, if $\mathbf{v}=\vec{\eps}\in[0,\infty)^d$, then we simply write $F(\eps)$ in lieu of $F(\vec{\eps})$.	

    \item (On morphisms) Given any natural transformation $\psi:F\Rightarrow G$, the natural transformation $\psi(\bv):F(\bv)\Rightarrow G(\bv)$ is defined as $\psi(\bv)_\ba=\psi_{\ba+\bv}:F(\bv)_\ba\rightarrow G(\bv)_\ba$ for each $\ba\in \R^d$.
\end{enumerate}
 
\end{definition}

For any $\bv\in [0,\infty)^d$, let $\psi_F^\bv: F \Rightarrow F(\bv)$ be the natural transformation whose restriction to each $F_\ba$ is the morphism $\varphi_F(\ba, \ba+\bv)$ in $\C$. When $\bv=\vec{\eps}$, we denote $\psi_F^\bv$ simply by $\psi_F^\eps$.  
\begin{definition}[$\bv$-interleaving between $\R^d$-indexed functors]\label{def:interleaving_general} Let $\mathcal{C}$ be any category. Given any two functors $F,G:\R^d\rightarrow \mathcal{C},$ we say that they are \emph{$\bv$-interleaved} if there are natural transformations $f:F\Rightarrow G(\bv)$ and $g:G\Rightarrow F(\bv)$  such that 
	\begin{enumerate}[label=(\roman*)]
		\item $g(\bv)\circ f = \psi_F^{2\bv}$,
		\item $f(\bv)\circ g = \psi_G^{2\bv}$.
	\end{enumerate} 

In this case, we call $(f,g)$ a \emph{$\bv$-interleaving pair}. When $\bv=\eps(1,\ldots,1)$, we simply call $(f,g)$ \emph{$\eps$-interleaving pair}. The interleaving distance between $\dint^\C$ is defined as
\begin{equation}\label{eq:interleaving0}
    \dint^\C(F,G):=\inf\{\eps\in [0,\infty):F,G\ \mbox{are $\vec{\eps}$-interleaved}\},
\end{equation}
	where we set $\dint^\C(F,G)=\infty$ if there is no $\eps$-interleaving pair between $F$ and $G$ for any $\eps\in [0,\infty)$. Then $\dint^\C$ is an extended pseudo-metric for $\C$-valued  $\R^d$-indexed functors.  \textbf{By replacing $\R^d$ by $\Z^d$ in Definitions \ref{def:eps-shifting} and \ref{def:interleaving_general}, we similarly obtain the interleaving distance between $\Z^d$-indexed functors.}
\end{definition}

\begin{definition}[Poset $\U$]\label{item:posets1}
  The poset $\U:=\left\{(u_1,u_2)\in \R^2: u_1\leq u_2\right\}$ is equipped with the partial order inherited from $\R^{\mathrm{op}}\times \R$ (see Figure \ref{fig:kan poset} (A)), i.e. $(u_1,u_2)\leq (u_1',u_2')$ in $\U$ if and only if the interval $(u_1,u_2)\subset \R$ is contained in $(u_1',u_2')\subset \R$.

\end{definition}

The reflection map $\mathbf{r}:\R^{\mathrm{op}}\rightarrow \R$ defined by $t\mapsto -t$ induces a poset isomorphism  $\R^\mathrm{op}\times \R\rightarrow \R^2$ and this in turn induces an isomorphism $\C^{\R^2}\rightarrow \C^{\R^{\mathrm{op}}\times \R}.$ Therefore, the notions of $\eps$-interleaving pair and the interleaving distance $\dint^\C$ on $\ob(\C^{\R^2})$ carry over to $\R^\mathrm{op}\times \R$-indexed or $\U$-indexed functors. 
\paragraph{Extension functor and the interleaving for $\ZZ$-indexed functors.} From Definition \ref{def:posets} \ref{item:ZZ}, recall the poset $\ZZ=\{(i,j)\in \Z^2: j=i\ \mbox{or}\ j=i-1\}\subset\R^{\mathrm{op}}\times \R$. Let $\iota:\ZZ\hookrightarrow \R^\mathrm{op}\times \R$ be the canonical inclusion map. For any $\bu=(u_1,u_2)\in \U$, let
\[\ZZ[\iota\leq \bu]:=\{a\in \ZZ: \iota(a)\leq \bu\} 
\] 
which is a subposet of $\ZZ$. Observe that $\ZZ[\iota\leq \bu]$ cannot be empty for any choice of $\bu\in \U$. See Figure \ref{fig:kan poset} (B).

\begin{figure}
    \centering
    \includegraphics[width=0.8\textwidth]{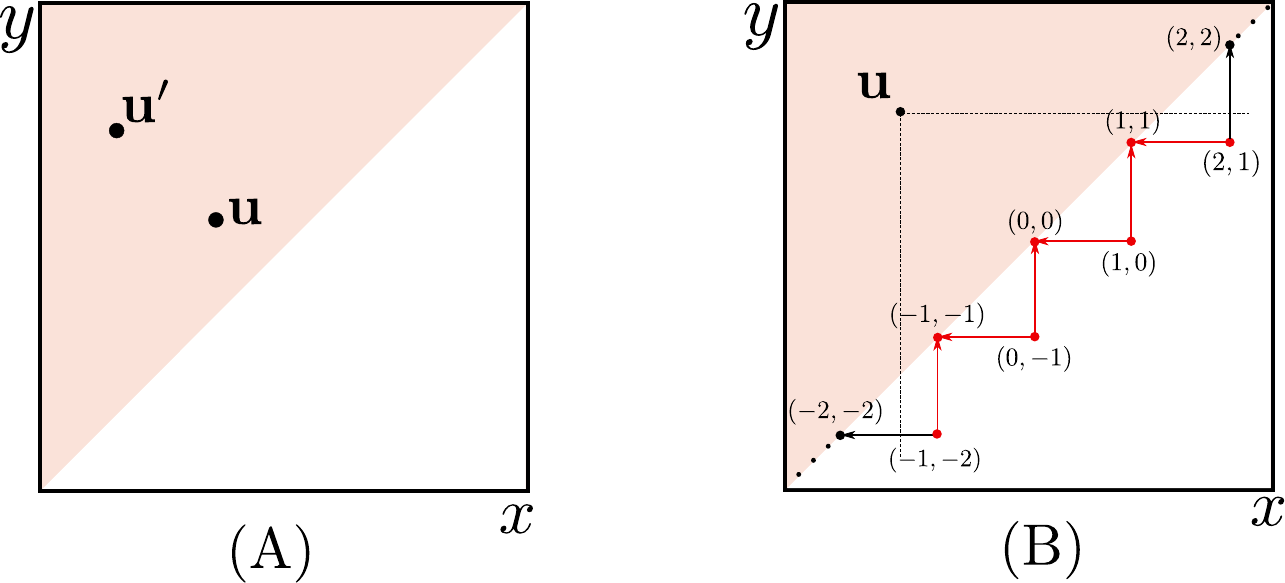}
    \caption{ (A) The shaded region stands for the poset $\U$. For $\bu,\bu'\in \U$ as marked in the figure, we have $\bu\leq\bu'$. (B) Fixing $\bu\in\U$ as shown, the subposet $\ZZ[\iota\leq \bu]$ is indicated by  the red points and the red arrows.}
    \label{fig:kan poset}
\end{figure}

 We review the definition of extension functor $E:\C^{\ZZ}\rightarrow \C^{\U}$ of \cite{botnan2018algebraic} for a cocomplete category $\C$.  For any $M: \ZZ\rightarrow \C$, define the functor $\tilde{E}(M):\R^\mathrm{op}\times \R\rightarrow \C$ as follows: For $\ba\in \R^\mathrm{op}\times \R$,  \[\tilde{E}(M)(\ba):=\varinjlim M|_{\ZZ[(\iota \leq \ba)]}.\]
  Also, for any pair $\ba\leq \bb$ in $\R^{\mathrm{op}}\times \R$, since $\ZZ[\iota\leq \ba]$ is a subposet of $\ZZ[\iota\leq\bb]$, the linear map $\varinjlim M|_{\ZZ[(\iota \leq \ba)]}\rightarrow \varinjlim M|_{\ZZ[(\iota \leq \bb)]}$, is uniquely specified by the initial property of the colimit $\varinjlim M|_{\ZZ[(\iota \leq \ba)]}$ (Definition \ref{def:colimit} and Remark \ref{rem:restriction}).  This $\tilde{E}(M)$ is called the \emph{left Kan extension} of $M$ along $\iota:\ZZ\hookrightarrow \R^{\mathrm{op}}\times \R$. Given $M,N:\ZZ\rightarrow \C$ and a natural transformation $\Gamma:M\rightarrow N$, universality of colimits also yields an induced morphism $\tilde{\Gamma}:\tilde{E}(M)\rightarrow\tilde{E}(N)$. Given any $M:\ZZ\rightarrow \C$,  the functor $E(M):\U\rightarrow \C$ is defined as the restriction $\tilde{E}(M)|_\mathbf{U}: \U\rightarrow \C$.

\begin{definition}[Interleaving distance between zigzag modules \cite{botnan2018algebraic}]\label{def:interleaving by botnan}  Let $\C$ be a cocomplete category. For any $M,N:\ZZ\rightarrow \C$, 
\[\dint(M,N):=\dint^{\C}\left(E(M),E(N)\right).\]
\end{definition}

\subsection{Existing stability theorems}\label{sec:existing stability}
In this section we review the existing stability results from \cite{botnan2018algebraic,mccleary2018bottleneck}.

\paragraph{Stability theorem of Patel.} We recall \cite[Theorem 5.6]{mccleary2018bottleneck}: 

\begin{theorem}[Stability for constructible persistence modules ]\label{thm:patel}Let $\C$ be an essentially small, abelian\footnote{A category $\C$ is \emph{abelian} if morphisms and objects in $\C$ can be ``added" and \emph{kernels} and \emph{cokernels} exist in $\C$ with some desirable properties. See \cite[p.198]{mac2013categories} for the precise definition.} category. Let $F,G:\R\rightarrow \C$ be constructible (Definition \ref{def:constructible}). Then,
\[\bott\left(\dgm_{\C}(F),\dgm_{\C}(G)\right)\leq \dint(F,G).
\]
We remark that the persistence diagram $\dgm_{\C}(F)$ can be computed via the following process: (1) Re-indexing of $F:\R\rightarrow \C$ to obtain $D(F):\Z\rightarrow \C$ (Section \ref{sec:patel}), (2) computing the persistence diagram of $D(F)$ by Definition \ref{def:generalized PD2}, and (3) the rescaling of the persistence diagram (Section \ref{sec:patel}).
\end{theorem}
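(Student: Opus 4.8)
The plan is to reduce the statement to the discrete indexing poset $\Z$, then prove a ``box'' (erosion-type) estimate at the level of the rank invariant, and finally transfer this estimate to the persistence diagram through the Möbius-inversion identity. First I would invoke the re-indexing functor $D(-)$ of Appendix \ref{sec:patel}: a constructible $F:\R\to\C$ is replaced by $D(F):\Z\to\C$, and the attendant rescaling turns $\eps$-interleavings of $F,G$ into $k$-interleavings of $D(F),D(G)$ for an integer $k$ controlled by $\eps$ together with the finitely many critical values, while converting $\bott$-distances of the $\R$-indexed diagrams into $\bott$-distances of the $\Z$-indexed ones compatibly. So it suffices to show: if $F,G:\Z\to\C$ are $k$-interleaved with $k\in\Z_{\ge 0}$, then $\bott(\dgm_\C(F),\dgm_\C(G))\le k$. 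Here $\con(\Z)=\inter(\Z)$ (Remark \ref{rem:con(p)}), every perimeter is $\le 2$, and the Möbius formula (\ref{eq:mobius formula}) specializes, for a finite interval, to
\[\dgm_\C(F)([a,b]) = \rk(F)([a,b]) - \rk(F)([a-1,b]) - \rk(F)([a,b+1]) + \rk(F)([a-1,b+1]),\]
with the usual conventions for half-infinite and bi-infinite intervals.

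\textbf{Box estimate on the rank invariant.} Fix $[a,b]\in\inter(\Z)$ with $b-a\ge 2k$. The interleaving natural transformations $f:F\Rightarrow G(k)$ and $g:G\Rightarrow F(k)$ restrict to a commuting diagram relating $F|_{[a+k,b-k]}$ and $G|_{[a,b]}$; combining it with the limit cone of $F|_{[a+k,b-k]}$ and the colimit cocone of $G|_{[a,b]}$ (Remark \ref{rem:restriction}) exhibits the canonical LC map $\psi_{F|_{[a+k,b-k]}}$ as a composite $h\circ\psi_{G|_{[a,b]}}\circ g'$. Passing to Grothendieck-group classes and applying Proposition \ref{lem:order reversing property} (whose hypotheses hold because abelian categories have epimorphic images and a translation-invariant order satisfying conditions (i),(ii) there) gives $\rk(F)([a+k,b-k])\le\rk(G)([a,b])$, and symmetrically $\rk(G)([a+k,b-k])\le\rk(F)([a,b])$. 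In the abelian setting one can do better: the kernels and cokernels of the two squares of the interleaving ladder sit in short exact sequences, so the relevant $\B(\C)$-classes obey additive (exact) relations rather than mere inequalities. Either way, $\rk(F)$ and $\rk(G)$, as order-reversing functions on $\inter(\Z)$, are within erosion distance $k$.

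\textbf{From the rank invariant to the diagram.} The combinatorial heart is to upgrade ``erosion distance $\le k$ between rank invariants'' to ``$\bott\le k$ between their Möbius inversions''. Working in $\B(\C)$, I would split each of $\dgm_\C(F),\dgm_\C(G)$ into a formal positive part and negative part, and build a partial $k$-matching interval by interval: an interval $[a,b]$ occurring in the positive part of $\dgm_\C(F)$ is matched to the interval of $\dgm_\C(G)$ whose endpoints differ from $a,b$ by at most $k$ and which the box estimates of Step~2 force to carry that class; intervals too short to survive a full $k$-shift (those annihilated in $\dgm_\C(G)$ under the shift) are matched to the zero/diagonal class. One then checks — using the exact relations from Step~2 to control cancellation between positive and negative parts — that the assignment is well defined, exhausts both diagrams, and has cost $\le k$. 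This argument specializes to the classical ``rank invariant determines barcode'' matching when $\C=\vect$.

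\textbf{Main obstacle.} The hard part is precisely Step~3: defining and producing a genuine bottleneck matching between $\B(\C)$-valued (hence possibly signed) persistence diagrams, and verifying that the shift-minimal matching is simultaneously consistent and $k$-bounded. This is where essential smallness (so that $\I(\C)$ is a set and matchings are meaningful) and, crucially, the \emph{abelian} hypothesis are used: abelianness supplies the short exact sequences along the interleaving ladder that make $\B(\C)$-classes additive, which is what licenses the cancellation bookkeeping in the matching. Without it one recovers only Patel's semicontinuity, not the sharp Lipschitz bound; indeed, the subsequent sections of this paper illustrate exactly how much extra structure (e.g.\ the explicit $\sets$-level description of Reeb graphs) is needed to obtain a true continuity statement for a non-abelian target such as $\sets$.
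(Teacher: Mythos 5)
First, note that the paper does not prove this statement at all: Theorem \ref{thm:patel} is quoted verbatim as \cite[Theorem 5.6]{mccleary2018bottleneck} in the appendix on \emph{existing} stability theorems, so there is no in-paper argument to compare against; you are effectively reproving an external result. Judged on its own terms, your proposal has a genuine gap exactly where you flag the ``main obstacle.'' Steps~1 and~2 are reasonable (reduction to $\Z$, erosion estimate on the rank invariant), but Step~3 --- passing from ``the rank invariants are within erosion distance $k$'' to ``the M\"obius inversions are within bottleneck distance $k$'' --- is the entire content of the theorem, and you do not carry it out: you describe a matching strategy and then write ``one then checks \ldots''. Two specific things are missing. (a) To even speak of a bottleneck matching in the sense of Definition \ref{def:bottleneck distance}, the diagrams must be genuine (non-negative) multisets; for an abelian target this requires the positivity of the type-$\B$ persistence diagram (a theorem of Patel), which you neither prove nor invoke, and your ``split into formal positive and negative parts'' workaround is not compatible with the paper's definition of $\bott$. (b) Even granting positivity, extracting a $k$-matching from erosion-closeness of rank functions is a nontrivial induced-matching-type argument (this is where McCleary--Patel spend their effort); asserting that the box estimates ``force'' a class to sit at a nearby interval does not establish well-definedness, exhaustiveness, or the cost bound, especially once cancellation in $\B(\C)$ enters.

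There is also a concrete error in Step~2: the erosion inequality should read $\rk(F)([a-k,b+k])\leq \rk(G)([a,b])$, since the interleaving factors $\varphi_F(a-k,\,b+k)=g_b\circ\varphi_G(a,b)\circ f_{a-k}$ (the LC map of $F$ over the \emph{enlarged} interval factors through that of $G$ over the original one). Your version $\rk(F)([a+k,b-k])\leq\rk(G)([a,b])$ has the shifts in the wrong direction; combined with the order-reversing property of the rank invariant and the symmetric inequality for $G$, it would force $\rk(F)=\rk(G)$, which is clearly too strong. This is likely a slip rather than a conceptual confusion, but as written the box estimate is false.
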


\paragraph{Bottleneck distance.} 
 We regard the extended real line $\ER:=\R\cup\{-\infty,\infty\}$ as a poset with the canonical order $\leq$.  Let $\EU:=\left\{(u_1,u_2)\in \ER^2: u_1\leq u_2\right\}$ be equipped with the partial order inherited from $\ER^{\mathrm{op}}\times \ER$.
 For $\bu=(u_1,u_2),\ \bv=(v_1,v_2)\in \EU$, let\[\norm{\bu-\bv}_\infty:=\max\left(\abs{u_1-v_1}, \abs{u_2-v_2}\right).
\]  
We can quantify the difference between two persistence diagrams or two barcodes of real intervals, using the bottleneck distance \cite{cohen2007stability}: 

\begin{definition}[The bottleneck distance ]\label{def:bottleneck distance}Let $X_1,X_2$ be multisets of points in $\EU$. Let $\alpha:X_1\nrightarrow X_2$ be a matching, i.e. a partial injection. We call $\alpha$ an \emph{$\eps$-matching} if 
\begin{enumerate}[label=(\roman*)]
    \item for all $\bu\in \dom(\alpha)$, $\norm{\bu-\alpha(\bu)}_\infty\leq \eps$,
    \item for all $\bu=(u_1,u_2)\in X_1\setminus \dom(\alpha)$, $u_2-u_1 \leq 2\eps$,
    \item for all $\bv=(v_1,v_2)\in X_2 \setminus \im(\alpha)$, $v_2-v_1 \leq 2\eps$.
\end{enumerate}
Their bottleneck distance $\bott(X_1,X_2)$ is defined  as the infimum of $\eps\in[0,\infty)$ for which there exists an $\eps$-matching $\alpha:X_1\nrightarrow X_2$.
\end{definition}

Recall that $\langle b,d \rangle_{\ZZ}$ for $b,d\in\Z$ denotes intervals of $\ZZ$ and that  $\langle b,d \rangle$ for $b,d\in\R$ denotes intervals of $\R$.
\begin{Remark}(1) Given a pair of multisets of intervals $\langle b,d \rangle_{\ZZ}$ of $\ZZ$, their bottleneck distance is defined by converting those multisets into the multisets of $\EU$ via the identification $\langle b,d \rangle_{\ZZ}\leftrightarrow (b,d)\in \EU$. (2) A map $\inter(\ZZ)\rightarrow \Z_+$ can be considered as a multiset of $\inter(\ZZ)$ in an obvious way, and thus such maps can also be compared in $\bott$. (3) The bottleneck distance between multisets of real intervals $\langle b,d \rangle$ is also defined  via the identification $\langle b,d \rangle \leftrightarrow (b,d)\in \EU$.  
\end{Remark}

\paragraph{Algebraic stability for zigzag modules.}

\begin{theorem}[Bottleneck stability for zigzag modules {\cite{bjerkevik2016stability,botnan2018algebraic}}]\label{thm:bottleneck stability}For any $M,N:\ZZ\rightarrow \vect$,
\[\bott\left(\dgmzz(M),\dgmzz(N)\right)\leq 2\cdot \dint(M,N).\]
\end{theorem}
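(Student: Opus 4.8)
The plan is to recall the structure of the proof of this cited result (due to Botnan--Lesnick, with the matching constant isolated by Bjerkevik), since it is used here as a black box. The engine is the extension functor $E\colon\vect^{\ZZ}\to\Vect^{\U}$ underlying Definition~\ref{def:interleaving by botnan}, which by construction makes $\dint(M,N)=\dint^{\Vect}(E(M),E(N))$; write $\delta$ for this common value. Thus it suffices to produce, for each $\eps>\delta$, a $2\eps$-matching between $\dgmzz(M)$ and $\dgmzz(N)$, the two being regarded as multisets in $\EU$ via the identification $\langle b,d\rangle_{\ZZ}\leftrightarrow(b,d)$.

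First I would compute $E$ on interval summands. Since $M$ and $N$ are interval decomposable (Theorem~\ref{thm:interval decomposition0}) and $E$ --- being the $\U$-restriction of a left Kan extension --- preserves direct sums, one gets $E(M)\cong\bigoplus_{\langle b,d\rangle_{\ZZ}\in\dgmzz(M)}E\bigl(I^{\langle b,d\rangle_{\ZZ}}\bigr)$, and similarly for $N$. A direct colimit computation shows that $E\bigl(I^{\langle b,d\rangle_{\ZZ}}\bigr)$ is the indicator module of a \emph{block} of $\U$: a region whose shape is one of four types --- a birth quadrant, a death quadrant, a horizontal band, or a vertical band --- dictated by the four interval types of Notation~\ref{nota:intervals of zz}, and whose two defining coordinates are exactly $b$ and $d$. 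Hence $E(M)$ and $E(N)$ are \emph{block decomposable}, and the identification above carries $\dgmzz(M),\dgmzz(N)$ onto the block barcodes of $E(M),E(N)$, with the interval type encoded in the block type.

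The heart of the argument is then a statement purely about block decomposable $\U$-modules: if two such are $\eps$-interleaved, their block barcodes admit a $2\eps$-matching (one must also check the matching can be taken to respect block types, or equivalently that the type is determined by the $\EU$-coordinates up to the $2\eps$ slack). I would prove this in the spirit of Bjerkevik's interleaving-to-matching scheme: an $\eps$-interleaving $(f,g)$ with $g(\eps)\circ f=\psi_{E(M)}^{2\eps}$ and $f(\eps)\circ g=\psi_{E(N)}^{2\eps}$ restricts summand-by-summand to a system of maps between the two families of blocks whose supports force a partial bijection; one then separates each barcode into a ``long'' part (blocks not annihilated by the $2\eps$-shift, which get matched) and a ``short'' part (matched to the empty block), just as in the Bauer--Lesnick induced-matching framework, but now accounting for the fact that band-type blocks are neither up-sets nor down-sets of $\U$. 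That missing monotonicity in one coordinate is precisely what obstructs an $\eps$-matching and forces the factor $2$, which is known to be optimal (compare the tightness discussion around Example~\ref{ex:tight}). Translating a $2\eps$-matching of block barcodes back through $\langle b,d\rangle_{\ZZ}\leftrightarrow(b,d)$ yields a $2\eps$-matching of $\dgmzz(M)$ and $\dgmzz(N)$; letting $\eps\downarrow\delta$ gives the claim.

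The main obstacle is exactly this block-stability step: the classical induced-matching proof of bottleneck stability applies verbatim only when the interval summands are rectangles (or more generally ``one-sided'' intervals), whereas band-type blocks fall outside that class, so one must either use Bjerkevik's more robust combinatorial bound --- which for the class of blocks evaluates to the constant $2$ --- or Botnan--Lesnick's explicit matching built from the trivial/nontrivial decomposition of the interleaving morphisms \cite{bjerkevik2016stability,botnan2018algebraic}. A secondary technical point deserving care is that $E(M)$ need not be pointwise finite dimensional when $\dgmzz(M)$ is infinite (infinitely many bars can pass below-left of a single point of $\U$); one should therefore either invoke the pointwise-finite-dimensionality (or local finiteness) hypotheses under which the cited theorem is properly stated, or first reduce to that case by a standard truncation argument before applying block stability.
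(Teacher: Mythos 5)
The paper does not actually prove this theorem: it is imported verbatim from the cited references (Bjerkevik; Botnan--Lesnick) and used as a black box in Section~\ref{sec:set-stability}, so there is no in-paper argument to compare yours against step by step. That said, your sketch is an accurate reconstruction of the cited proof's architecture, and its first half coincides with machinery the paper does develop for a different purpose in Appendix~\ref{sec:mike}: the extension of a zigzag module to a block decomposable module and the fact that interval summands go to block summands are exactly Lemmas~\ref{lem:block extension} and~\ref{lem:direct sum preserving} (modulo the paper's extra detour through the right Kan extension $E_2$, which you do not need here since Definition~\ref{def:interleaving by botnan} only uses the left Kan extension restricted to $\U$). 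You also correctly isolate the two real difficulties --- that band-type blocks defeat the naive induced-matching argument, and that pointwise finite dimensionality of the extension needs care --- which is where all the work in the cited papers lives.

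One point of precision worth fixing: in Bjerkevik's formulation the block-level stability statement is actually sharp with constant $1$ (a $\delta$-interleaving of block decomposable modules yields a $\delta$-matching of blocks in the appropriate block metric); the factor $2$ in the zigzag statement arises when translating that block matching back into an $\ell^\infty$-matching of the points $(b,d)\in\EU$ and into the diagonal condition $d-b\le 2\eps$ for unmatched bars, where the band-type blocks lose a factor of $2$. Your phrasing folds this into the block-stability step itself ("block barcodes admit a $2\eps$-matching"), which is a consistent way to set it up but slightly misattributes where the constant enters; this is also why, as in Remark~\ref{rem:forget-deco}, the constant improves to $1$ when the matching is constrained to preserve interval type. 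Neither this nor the finiteness caveat is a gap in the sense of an unfillable hole --- both are resolved in the cited works --- but a self-contained write-up would have to supply the block-stability theorem in full, which is a substantial theorem in its own right rather than a routine verification.
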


Let $M$ be any zigzag module. By $\dgmzz_{\mathbf{o}}(M),\dgmzz_{\mathbf{co}}(M),\dgmzz_{\mathbf{oc}}(M)$ and $\dgmzz_{\mathbf{c}}(M)$, we mean the subcollection of $\dgmzz(M)$, consisting solely of the points of the form  $(b,d)_{\ZZ}, [b,d)_{\ZZ}, (b,d]_{\ZZ},$ and $[b,d]_{\ZZ}$, respectively.

\begin{Remark}\label{rem:forget-deco} Suppose that the two zigzag modules $M,N$ in Theorem \ref{thm:bottleneck stability} are the \emph{levelset zigzag persistent homology} of any two \emph{Morse type} functions $f,g:X\rightarrow \R$ (Definition \ref{def:morse type}). Then, the inequality in Theorem \ref{thm:bottleneck stability} can be extended and strengthened as follows \cite[Theorem 4.11]{botnan2018algebraic,carlsson2019parametrized}: For each $\star\in \{\mathbf{o},\mathbf{co},\mathbf{oc},\mathbf{c}\}$,
\[\bott\left(\dgmzz_\star(M),\dgmzz_\star(N)\right)\leq \dint(M,N) \leq \norm{f-g}_\infty.\]
Therefore, we also have \[\bott\left(\dgmzz(M),\dgmzz(N)\right)\leq \max_{\star}\bott\left(\dgmzz_\star(M),\dgmzz_\star(N)\right)\leq \dint(M,N)\leq \norm{f-g}_\infty,\]where the maximum is taken over all $\star\in \{\mathbf{o},\mathbf{co},\mathbf{oc},\mathbf{c}\}$.  
\end{Remark}

\section{Proof of Proposition \ref{rem:implication}}\label{sec:proof}

In this section we prove Proposition \ref{rem:implication}. For notational simplicity, we set $\Pb=\ZZ$. We remark that the proof extends to arbitrary connected locally finite posets.  

\paragraph{Canonical bases and $l$-type intervals.} Suppose that $M:\ZZ\rightarrow \vect$ is given as  $M=\bigoplus_{c\in C} I^{J_c}$ for an index set $C$.  Then  for each $(i,j)\in \ZZ$, the dimension of $M_{(i,j)}$ is the total multiplicity of intervals containing $(i,j)$ in $\dgmzz(M)=\lmulti J_c: c\in C\rmulti$. Given any $c\in C$, and $(i,j)\in J_c$, let $e^{(i,j)}_c$ denote the $1$ in the field $\F$ which corresponds to the $(i,j)$-component of $I^{J_c}$. Then for each $(i,j)\in \ZZ$, the vector space $M_{(i,j)}$ admits the \emph{canonical basis} $B_{(i,j)}=\left\{e^{(i,j)}_c: J_c \ \mbox{contains $(i,j)$} \right\}$ and hence every $v\in M_{(i,j)}$ can be uniquely expressed as a linear combination of the elements in $B_{(i,j)}$, i.e.
\begin{equation}\label{eq:canonical expression}
    v=\sum_{J_c\ni (i,j)} a_c e_c^{(i,j)},\hspace{5mm}  a_c\in \F.
\end{equation}
This expression is called the \emph{canonical expression} of $v$. 
The collection $\B=(B_{(i,j)})_{(i,j)\in \ZZ}$ is called the \emph{canonical basis} of $M=\bigoplus_{c\in C} I^{J_c}$.

In order to prove Proposition \ref{rem:implication}, we identify a certain type of intervals of the poset $\ZZ$:

\begin{definition}[$l$-type intervals]\label{def:l-type} Any interval $I$ of $\ZZ$ is called \emph{$l$-type}\footnote{Given any $M:\ZZ\rightarrow \vect$, $l$-type intervals in $\dgmzz(M)$ contribute to the dimension of the \emph{limit} of $M$ and that is the reason for the name `$l$'-type.} if $I$ is of the form $(b,d)_\ZZ$ for some $b\in \Z\cup\{-\infty\}$ and $d\in \Z\cup \{\infty\}$ (see Figure \ref{fig:intervals}).

\end{definition}

\begin{notation}\label{def:equivalence} Let $M:\ZZ\rightarrow \vect$ be any zigzag module and let $(i,j),(i',j')\in \ZZ$. For $v_{(i,j)}\in M_{(i,j)}$ and $v_{(i',j')}\in M_{(i',j')}$, we write $v_{(i,j)}\sim v_{(i',j')}$ if $(i,j),(i',j')$ are comparable and either 
\begin{align*}
   & v_{(i,j)}=\varphi_M((i',j'), (i,j))(v_{(i',j')})\ \mbox{(when $(i',j')\leq (i,j)$) \ \ or \ \ }\\
    & v_{(i',j')}=\varphi_M((i,j), (i',j'))(v_{(i,j)})\ \mbox{(when $(i,j)\leq (i',j')$).}
\end{align*}

\end{notation}

\begin{proof}[Proof of Proposition \ref{rem:implication}] For simplicity of notation, we prove the proposition when $\Pb=\ZZ$ and $J=(-\infty,\infty)=\ZZ$. The proof straightforwardly extends to any connected, locally finite poset $\Pb$ and $J\in \con(\Pb)$. 
By Theorem \ref{thm:interval decomposition0}, we may assume that $M=\bigoplus_{c\in C}I^{J_c}$, where $\dgmzz(M)=\lmulti J_c:c\in C\rmulti$. In what follows, we identify $\ZZ$ with the integers $\Z$ via the bijection $(i,j)\mapsto i+j$. Therefore, by $\bigoplus_{k\in\Z} M_{k}$ and $\prod_{k\in\Z} M_{k}$, we will denote $\bigoplus_{(i,j)\in\ZZ} M_{(i,j)}$ and $\prod_{(i,j)\in\ZZ} M_{(i,j)}$ respectively.  

	\begin{enumerate}[label=(\roman*)]
		\item \label{item:limit}The limit of $M$ is (isomorphic to) the pair $\left(V,(\pi_k)_{k\in \Z}\right)$ described as follows: 
		\begin{equation}\label{eq:limit}
		    V:=\left\{(v_k)_{k\in \Z}\in \prod_{k\in \Z} M_k:\ \forall k\in \Z,\ v_{k}\sim v_{k+1}   \right\}.
		\end{equation}
		For each $k\in \Z$, the map $\pi_k:V\rightarrow M_k$ is the canonical projection.

		\item The colimit of $M$ is (isomorphic to) the pair $\left(U, (i_k)_{k\in \Z}\right)$ described as follows: $U$ is the quotient vector space $\left(\bigoplus_{k\in\Z} M_k\right)/W$, where $W$ is the subspace of the direct sum $\bigoplus_{k\in \Z} M_k$ which is generated by the vectors of the form  $(\cdots,0,\ldots,0, v_k, -v_{k+1},0,\ldots,0,\cdots)$ with $v_k\sim v_{k+1}$ (Notation \ref{def:equivalence}). Let $q$ be the quotient map from  $\bigoplus_{k\in \Z} M_k$ to $U=\left(\bigoplus_{k\in \Z}M_k\right)/W$. For $k\in \Z$, let the map $\bar{i_k}:M_k\rightarrow \bigoplus_{k\in \Z} M_k$ be the canonical injection. Then $i_k:M_k\rightarrow U$ is the composition $q\circ \bar{i_k}$. 
		\label{item:colim}
	\end{enumerate}

Let $0\in \Z$. Since the canonical map $\psi_M:\varprojlim M \rightarrow \varinjlim M$ is equal to $i_0 \circ \pi_0$, it suffices to show that the dimension of the vector space $\im (i_0\circ \pi_0)$ is equal to the cardinality of $(-\infty,\infty)_\ZZ$ in $\dgmzz(M)$. 

If $M_0$ is the zero space, then it is clear that there is no $(-\infty,\infty)_\ZZ$ in $\dgmzz(M)$ and that $\im(\pi_0)=0$. Therefore, $\im(i_0\circ \pi_0)=0$, and the statement directly follows.

Assume that $M_0$ is not trivial.  Define 
\[C_0:=\{c\in C:\ 0\in J_c\}.\]
Since $M_0$ is a finite dimensional vector space, $\abs{C_0}=\dim(M_0)$ is finite and hence we can write $C_0=\{c_1,\ldots,c_m\}$, where $\dim(M_0)=m$. Also, we can write $M_0\cong \bigoplus_{j=1}^m\F_j$, where each $\F_j=\F$ is the component of the interval module $I^{J_{c_j}}$ at $0\in\ZZ$. We identify $M_0$ with $ \bigoplus_{j=1}^m\F_j$. Then, we claim that
\begin{equation}\label{eq:claim2}
    \im (\pi_0)=\left\{(a_1,\ldots,a_m)\in \bigoplus_{j=1}^m\F_j:\ a_j=0\ \mbox{if $J_{c_j}$ is not $l$-type}\right\}.
\end{equation}
We prove this equality at the end of the proof. 

For $j=1,\ldots,m$, let $e_j:=(0,\ldots,0,\underset{j-\mbox{th}}{1},0,\ldots,0)\in \bigoplus_{j=1}^m\F_j$. By equation (\ref{eq:claim2}), the set $B_0=\{e_j: \mbox{$J_{c_j}$ is $l$-type}\}$ is a basis of $\im(\pi_0)$. Therefore, the dimension of $\im(i_0\circ \pi_0)$ is equal to the dimension of the space that is spanned by the image of $B_0$ under the map $i_0:M_0\rightarrow U$. By invoking item \ref{item:colim} above, if $J_{c_j}\neq (-\infty,\infty)_\ZZ$, it follows that $i_0(e_j)=0\in U$. 

Let $C_0^{\mbox{full}}:=\{c\in C: J_c=(-\infty,\infty)_{\ZZ}\}$, which is a subset of $C_0=\{c_1,\ldots,c_m\}$. Assuming that $C_0^{\mbox{full}}\neq \emptyset$, suppose that $C_0^{\mbox{full}}=\{c_1,\ldots,c_n\}$ for some $n\leq m$ without loss of generality. Invoking item \ref{item:colim} above, the set $\{i_0(e_{1}),\ldots, i_0(e_{n})\}$ is linearly independent in $U$. 
Therefore, we have that
\[\rank(i_0\circ \pi_0)=n=(\mbox{the multiplicity of $(-\infty,\infty)_\ZZ$ in $\dgmzz(M)$)}, \] 
as desired.

Finally we prove equation (\ref{eq:claim2}). First we prove ``$\subset$". Recall item \ref{item:limit} above and pick any $v=(v_k)_{k\in \Z}\in V$. Then $\pi_0(v)=v_0=(a_1,\ldots,a_m)$. Suppose that $c_j\in C_0$ is such that $J_{c_j}$ is not $l$-type. This implies that the interval $J_{c_j}$ has an endpoint $r=(r_1,r_2)\in \ZZ$ where $r_1=r_2\in \Z$ and then either $(r_1+1,r_1)$ or $(r_1,r_1-1)$ is not in $J_{c_1}$. Without loss of generality, assume that $s=(r_1+1,r_1)\in \ZZ$ does not belong to $J_{c_1}$. By the choice of $v=(v_k)_{k\in \Z}$, we have $v_0\sim v_1\sim \ldots \sim v_{2r_1}\sim v_{2r_1+1}$, and this leads to that $a_j=0$.

Next we show ``$\supset$". Pick any $(a_1,\ldots,a_m)$ in the RHS of equation (\ref{eq:claim2}). For each $k\in \Z$, define $v_k\in M_k$ using the canonical expression:
\[v_k:=\sum_{J_c\ni k} b_c e^{k}_{c},\hspace{5mm}  b_c\in \F,\]
where $b_c=0$ if $c\not\in C_0$ and $b_c=a_j$ if $c=c_j\in C_0=\{c_1,\ldots,c_m\}$. Let $v:=(v_k)_{k\in \Z}$. Then, one can check that $\pi_0(v)=(a_1,\ldots,a_m)$, completing the proof.
\end{proof}

\section{Constructible persistence modules and re-indexing}\label{sec:patel}

Patel generalizes the persistence diagram of Cohen-Steiner, Edelsbrunner, and Harer to the setting of \emph{constructible} $\R$-indexed diagrams valued in a symmetric monoidal category \cite{patel2018generalized}.

\begin{definition}[Constructible $\R$-indexed diagrams]\label{def:constructible} Let $S=\{s_1<s_2<\ldots<s_n\}$ be a finite set of $\R$. A diagram $F:\R\rightarrow \C$ is $S$-\emph{constructible} if 
\begin{enumerate}[label=(\roman*)]
    \item for $p\leq q<s_1$, $\varphi_F(p, q)$ is the identity on $e$,
    \item for $s_i\leq p\leq q< s_{i+1}$, $\varphi_F(p, q)$ is an isomorphism,
    \item for $s_n\leq p\leq q$, $\varphi_F(p, q)$ is an isomorphism.
\end{enumerate}
If $G:\R\rightarrow \C$ is $T$-constructible for some finite set $T\subset \R$, then we call $G$ \emph{constructible}.
\end{definition}

\paragraph{Re-indexing an $\R$-indexed diagram by $\Z$.} Let $F:\R\rightarrow \C$ be $S$-constructible with $S=\{s_1<s_2<\ldots<s_n\}$. A functor $D(F):\Z\rightarrow \C$ that contains all the algebraic information of $F$ would be defined as follows:

\begin{align*}
    D(F)_{i}&=\begin{cases}
    e,&\mbox{for $i\leq 0$,}\\
    F_{s_i},&\mbox{for $i=1,\ldots,n$,}\\
    F_{s_n},&\mbox{for $i\geq n+1$.}
    \end{cases}\\
    \varphi_{D(F)}\left(i,i+1\right)&=\begin{cases} \id_e,&\mbox{for $i\leq 0$,}\\
     \varphi_F(s_i,s_{i+1}),&\mbox{for $i=0,1,\ldots,n-1$, where $s_0$ is arbitrarily chosen in $(-\infty,s_1)$,}\\
     \id_{F_{s_n}},&\mbox{for $i\geq n$.}
    \end{cases}
\end{align*}
    
When $\C=\vect$, there exists a bijection from the barcode of $F$ to that of $D(F)$ via $[s_i,s_j)\mapsto [i,j-1]$ for $1\leq i<j\leq n$, and $[s_i,\infty) \mapsto [i,\infty)$ for $1\leq i\leq n$.
    
\paragraph{Re-indexing a $\Z$-indexed diagram by $\ZZ$.} Let $F:\Z\rightarrow \C$. Let us define $L(F):\ZZ\rightarrow \C$ as follows \cite[Remark 4.5]{botnan2018algebraic}:

\begin{align*}
    L(F)_{(i,i)}=L(F)_{(i+1,i)}&=F_i\\
    \varphi_{L(F)}\left((i+1,i),(i,i)\right)&=\id_{F_{i}}\\
    \varphi_{L(F)}\left((i-1,i),(i,i)\right)&=\varphi_F(i-1, i).
\end{align*}
When $\C=\vect$ and $F_i=0$ for $i\leq 0$, there exists a bijection from the barcode of $F$ to that of $L(F)$ via $[a,b]\mapsto [a,b+1)_{\ZZ}$  for $a,b\in \Z$ with $a\leq b$, and $[a,\infty)\mapsto [a,\infty)_{\ZZ}$ for $a\in \Z$.

\section{Rigorous definition of Reeb graphs}\label{appendix:Reeb graphs}

In order to introduce the definition of Reeb graphs, we begin by introducing the notion of \emph{Morse type} functions from \cite{botnan2018algebraic,carlsson2009zigzag}. 

\begin{definition}[Morse type functions]\label{def:morse type} Let $X$ be a topological space. We say that a continuous function \mbox{$p:X\rightarrow \R$} is of \emph{Morse type} if
	\begin{enumerate}[label=(\roman*)]
		\item There exists a strictly increasing function $\G:\Z\rightarrow \R$ such that $\lim_{i\rightarrow +\infty}\G(i)=\infty$, $\lim_{i\rightarrow -\infty}\G(i)=-\infty$ and such that for each open interval $I_i=(\G(i),\G(i+1))$ there exist a topological space $Y_i$  and a homeomorphism $h_i:I_i\times Y_i\rightarrow p^{-1}(I_i)$ with $f\circ h_i$ being the projection $I_i\times Y_i \rightarrow I_i$. \label{item:morse type1}
		\item Each homeomorphism $h_i:I_i\times Y_i \rightarrow p^{-1}(I_i)$ extends to a continuous function
		$$\bar{h}_i:\bar{I_i}\times Y_i \rightarrow p^{-1}(\bar{I_i}),$$where $\bar{I_i}$ denotes the closure of $I_i$.
		\item For all $t\in \R$ and $k\in\Z_+$, $\dim\Hrm_k\left(p^{-1}(t)\right)<\infty$. 
	\end{enumerate} 
\end{definition}

We introduce the definition of Reeb graphs \cite{de2016categorified}.

\begin{definition}[Reeb graphs]\label{def:Reeb graph}
		Let $X$ be a topological space and let \hbox{$p:X\rightarrow \R$} be of Morse type. If the topological spaces $Y_i$ as in Definition \ref{def:morse type} \ref{item:morse type1} are finite sets of points with the discrete topology, then the pair $(X,p)$ is said to be a \emph{Reeb graph}.\footnote{In \cite{de2016categorified}, the authors use the term \emph{$\R$-graph} instead of Reeb graph.} See Figure \ref{fig:Reeb} (A) for an illustrative example. \label{item:r-graph1}
\end{definition}

\section{Stability of the generalized rank invariant}
In this section we prove Theorem \ref{thm:stability of gen rank} which says that the generalized rank invariant $\rk(M)$ is stable in the erosion distance (Definition \ref{def:erosion}) under perturbations of $M:\Z^n\rightarrow \vect$ in the sense of the interleaving distance (Definition \ref{def:interleaving_general}).

\begin{Remark}\label{remark:computing limits and colimits}
Given any $M:\Pb\rightarrow \Vect$, we can compute the limit and colimit of $M$ as follows:

	\begin{enumerate}[label=(\roman*)]
		\item \label{item:limit}The limit of $M$ is (isomorphic to) the pair $\left(V,(\pi_k)_{k\in \Pb}\right)$ described as follows: 
		\begin{equation}\label{eq:limit}
		    V:=\left\{(v_k)_{k\in \Pb}\in \prod_{k\in \Pb} M_k:\ \forall k\leq l \ \mbox{in}\ \Pb, \varphi_M(k,l)(v_k)=v_{l}   \right\}.
		\end{equation}
		For each $k\in \Pb$, the map $\pi_k:V\rightarrow M_k$ is the canonical projection. An element of $V$ is called a \textbf{section} of $M$.

		\item The colimit of $M$ is (isomorphic to) the pair $\left(U, (i_k)_{k\in \Pb}\right)$ described as follows: For $k\in \Pb$, let $\bar{i_k}:M_k\rightarrow \bigoplus_{k\in \Pb} M_k$ be the canonical injection. $U$ is the quotient space $\left(\bigoplus_{k\in\Pb} M_k\right)/W$, where $W$ is the subspace of $\bigoplus_{k\in \Pb} M_k$ which is generated by the elements of the form $\bar{i}_k(v_k)-\bar{i}_{l}(v_l)$ for $k\leq l$ in $\Pb$ and $v_k\in M_k$ and $v_l\in M_l$. Let $q$ be the quotient map from  $\bigoplus_{k\in \Pb} M_k$ to $U=\left(\bigoplus_{k\in \Pb}M_k\right)/W$. For $k\in \Pb$, $i_k:M_k\rightarrow U$ is the composition $q\circ \bar{i_k}$. 
		\label{item:colim}
	\end{enumerate}
\end{Remark} 

\begin{definition}Let $I\subset \Z^n$ and $\eps\in \Z_+$. Let us define the \textbf{$\eps$-thickening} of $I$  as
\[I^\eps:=\left\{p\in \Z^n: \mbox{there exists $q\in I$ such that} \norm{p-q}_\infty\leq \eps \right\}.\]
\end{definition}

Let $\C$ be any essentially small and symmetric monoidal category satisfying Convention \ref{convention} \ref{item:a full subcategory of a bicomplete category}. We generalize the erosion distance in \cite{patel2018generalized} as follow.

\begin{definition}\label{def:erosion} Given $M,N:\Z^n\rightarrow \C$, the \textbf{erosion distance} between $\rk(M)$ and $\rk(N)$ is \[\dero(\rk(M),\rk(N)):=\min\{\eps\in[0,\infty]:\forall I\in \inter(\Z^n), \rk(M)(I)\geq \rk(N)(I^\eps) \ \mbox{and} \ \rk(N)(I)\geq \rk(M)(I^\eps)\}.\] 
\end{definition}

Note that $\dero(\rk(M),\rk(N))=0$ if and only if $\rk(M)=\rk(N)$ on $\inter(\Z^n)$. The triangle inequality can be directly proved by utilizing the following proposition and the monotonicity of the rank invariant (Proposition \ref{prop:order reversing property}).

\begin{proposition}\label{prop:thickening} Let $I\in \inter(\Z^n)$. For $\eps,\delta\in \Z_+$, $(I^\eps)^\delta=I^{\eps+\delta}$.
\end{proposition}

\begin{proof}
$(\subset)$ Let $p\in (I^\eps)^\delta$. Then, there exists $q\in I^\eps$ such that $\norm{p-q}_{\infty}\leq \delta$. Also, there exists $r\in I$ such that $\norm{q-r}_{\infty}\leq \eps$. Then, $\norm{p-r}_{\infty}\leq \norm{p-q}_{\infty}+\norm{q-r}_{\infty}\leq \delta+\eps$, implying that $p\in I^{\eps+\delta}$.

$(\supset)$ Let $p\in I^{\delta+\eps}$. Then, there exists $r\in I$ such that $\norm{p-r}_{\infty}\leq \delta+\eps$. For $i=1,\ldots,n$, let $p_i$ and $r_i$ be the $i$-th coordinates of $p$ and $r$, respectively. For each $i$, since $\abs{p_i-r_i}\leq \eps+\delta$, there exists $q_i\in \Z$ such that $\abs{p_i-q_i}\leq \delta$ and $\abs{q_i-r_i}\leq \eps$. Let $q:=(q_1,\ldots,q_n)\in \Z^n$. Then, $\norm{p-q}_{\infty}\leq \delta$ and $\norm{q-r}_{\infty}\leq \eps$, implying that $p\in (I^{\eps})^\delta$.
\end{proof}

In what follows, we prove that the generalized rank invariant $\rk(M)$ is stable in $\dero$ under perturbations of $M:\Z^n\rightarrow \vect$.

\begin{theorem}\label{thm:stability of gen rank} For any $M,N:\Z^n\rightarrow \vect$, we have $\dero(\rk(M),\rk(N))\leq \dint(M,N)$.
\end{theorem}

\begin{Remark}
\begin{enumerate}[label=(\roman*)]
    \item Theorem \ref{thm:stability of gen rank} can  be straightforwardly extended to the setting of $\R^n$-indexed persistence modules. Also, the target category $\vect$ can be replaced by  other categories including $\sets$ and $\ab$. 
    \item The computational complexity of $\dero(\rk(M),\rk(N))$ is analyzed (in \cite{kim2021computing} in a suitable setting).
  
    \item  In Definition \ref{def:erosion}, Proposition \ref{prop:thickening}, and Theorem \ref{thm:stability of gen rank}, there is no obstacle in replacing $\inter(\Z^n)$ by either the larger collection $\con(\Z^n)$ or the smaller collection $\{[p,q]\subset \Z^n:p\leq q \ \mbox{in $\Z^n$}\}$.
\end{enumerate}
\end{Remark}

\begin{proof}[Proof of Theorem \ref{thm:stability of gen rank}] If $\dint(M,N)=\infty$, there is nothing to prove. Assume that $\dint(M,N)=\eps<\infty$, and let $\alpha:M\Rightarrow N(\vec{\eps})$ and $\beta:N\Rightarrow M(\vec{\eps})$ be an $\eps$-interleaving pair (Definition \ref{def:interleaving by botnan}).  Fix $I\in \inter(\Z^n)$. We only prove that $\rk(N)(I)\geq \rk(M)(I^\eps)$. To this end, it suffices to find linear maps $\tilde{\alpha}$ and $\tilde{\beta}$ which make the diagram below commute

\begin{equation}\label{eq:commutative diagram}
    \begin{tikzcd}
\varprojlim M|_{I^\eps} \arrow[r] \arrow[d, "\tilde{\alpha}"]
& \varinjlim M|_{I^\eps} \\
\varprojlim N|_{I} \arrow[r]
& \varinjlim N|_{I} \arrow[u, "\tilde{\beta}"] 
\end{tikzcd}
\end{equation}
where the two horizontal maps are the canonical limit-to-colimit maps. Let us concretize each of the vector spaces $\varprojlim M|_{I^\eps}$, $\varinjlim M|_{I^\eps}$, $\varprojlim N|_{I}$ and $ \varinjlim N|_{I}$ as described in Remark \ref{remark:computing limits and colimits}. Then, let us define $\tilde{\alpha}$ as $(v_p)_{p\in I^\eps} \mapsto (\alpha_p(v_p))_{p+\vec{\eps}\in I}=(\alpha_{q-\vec{\eps}}(v_{q-\vec{\eps}}))_{q\in I}$. In words, each section $(v_p)_{p\in I^\eps}$ of $M|_{I^\eps}$ is sent via $\tilde{\alpha}$ to the section $(\alpha_p(v_p))_{p+\vec{\eps}\in I}$ of $N_I$. 
Note that $(\alpha(v_p))_{p+\vec{\eps}\in I}$ is indeed a section of $N|_I$ by virtue of the naturality of $\alpha$ and the fact that for each $q\in I$, $q-\vec{\eps}\in I^\eps$. Now let us define $\tilde{\beta}$ as $\left[w_q\right]\mapsto \left[\beta_q(w_q)\right]$ for any $q\in I$ and $w_q\in N_q\hookrightarrow \bigoplus_{r\in I} N_r$. Again the naturality of $\beta:N\Rightarrow M(\vec{\eps})$ and the fact that for any $q\in I$, the point $q+\vec{\eps}$ belongs to $I^\eps$ ensure the well-definedness of $\tilde{\beta}$.

Fix any $p_0\in I$. Then, both  $p_0-\vec{\eps}$ and $p_0+\vec{\eps}$ belong to $I^\eps$. Now note that any section $(v_p)_{p\in I^\eps}\in \varprojlim M|_{I^\eps}$ is sent via the maps in diagram  (\ref{eq:commutative diagram}) as follows, thus establishing the desired commutativity.

\[\begin{tikzcd}
(v_p)_{p\in I^\eps} \arrow[r, mapsto] \arrow[d, mapsto, "\tilde{\alpha}"]
& \left[v_{p_0-\vec{\eps}}\right]=\left[\varphi_M(p_0-\vec{\eps},p_0+\vec{\eps})(v_{{p_0}-\vec{\eps}})\right]\\
(\alpha_p(v_p))_{p+\vec{\eps}\in I} \arrow[r, mapsto]
&  \left[\alpha_{p_0-\vec{\eps}}(v_{{p_0}-\vec{\eps}})\right] \arrow[u, mapsto, "\tilde{\beta}"] 
\end{tikzcd}
\]
\end{proof}

\paragraph{Acknowledgements}
The idea of studying the map from the limit to the colimit of a given diagram stems from work by Amit Patel and Robert MacPherson circa 2012. We are grateful to Amit Patel and Justin Curry for insightful comments. We thank Michael Lesnick, Magnus Bakke Botnan, and anonymous reviewers for suggesting alternative proofs of (variants of) Proposition \ref{rem:implication}. We also thank Amit Patel and Alex McCleary for suggesting Examples \ref{ex:Amit} and  \ref{ex:Alex}, respectively.  Many thanks to Justin Curry and Benedikt Fluhr for finding an error in an earlier draft and for suggesting us to consider the Reeb graph in Figure \ref{fig:Curry}. Lastly, WK thanks Peter Bubenik, Nicolas Berkouk and Osman Okutan for beneficial discussions.  
This work was partially supported by NSF grants IIS-1422400, CCF-1526513, DMS-1723003, and CCF-1740761.

\bibliography{biblio}
\bibliographystyle{abbrv}
\end{document}